\def\bel{\begin{equation}\label}
\def\eeq{\end{equation}}
\def\ds{\displaystyle}
\def\R{\mathbb R}
\def\Z{\mathbb Z}
\def\F{\mathcal{F}}
\def\D{{\bf D}}
\def\B{{\bf B}}
\def\Tilde{\widetilde}
\def\bar{\overline}
\def\supp{{\bf supp}}
\def\alpha{\alphaup}
\def\beta{\betaup}
\def\gamma{\gammaup}
\def\delta{\deltaup}
\def\xi{{\xiup}}
\def\eta{{\etaup}}
\def\tau{{\tauup}}
\def\rho{{\rhoup}}
\def\phi{{\phiup}}
\def\psi{{\psiup}}
\def\lambda{{\lambdaup}}
\def\omega{\omegaup}
\def\varphi{{\varphiup}}
\def\gamma{{\gammaup}}
\def\var{\varepsilon}
\newtheorem{theorem*}{Theorem}
\newtheorem{thm}{Theorem}[section]
\newtheorem{lemma}{Lemma}[section]
\newtheorem{defn}{Definition}[section]
\date{}
\title{ \bf Homogeneous~Besov~spaces~in~Dunkl~setting}
\author[1]{Mengmeng Dou}
\author[2]{Jiashu Zhang}
\affil[1]{School of Mathematical Sciences, Zhejiang University,  Hangzhou, 310027,  People's Republic of China}
\affil[2]{School of Mathematical Sciences, Fudan University, Shanghai, 200433, People's Republic of China}
\begin{document}
\maketitle
\renewcommand{\thefootnote}{\fnsymbol{footnote}}
\footnotetext[1]{Emails of corresponding authors: doumengmeng@westlake.edu.cn;
	
~~~~~~~~~~~~~~~~~~~~~~~~~~~~~~~~~~~~~~~~~~~~~~~~~~~~~~~~~~~~~~~~	zhangjiashu@westlake.edu.cn.}
\footnotetext[2]{the ORCID of the first author: https://orcid.org/0009-0000-2022-8369.}
\begin{abstract}
{The purpose of this paper is to characterize the homogeneous Besov spaces in the Dunkl setting. We utilize a new discrete Calder\'{o}n reproducing formula, that is, the building blocks are differences of the Dunkl-Poisson kernel which involves both the Euclidean metric and the Dunkl metric. To introduce  the Besov spaces in the Dunkl setting, new test functions  and distributions are introduced. Also we prove a weak-type discrete Calder\'{o}n reproducing formula for the Dunkl-Besov spaces and demonstrate the completeness of the Dunkl-Besov spaces. As an application, we study the boundedness of Dunkl-Calder\'{o}n-Zygmund operators on the Dunkl-Besov spaces. }

{\bf Keywords}  ~Homogeneous Besov spaces, Dunkl setting, discrete Calder\'{o}n reproducing formula, Dunkl-Poisson kernel, test functions, distributions.
\end{abstract}

	\section{Introduction}
	\setcounter{equation}{0}	
	It is well known that harmonic analysis is strongly influenced by the group structures. An important example is the Dunkl setting, which is associated with a reflection group on $\R^n$. The  purpose of this paper is to establish the Besov spaces in the Dunkl setting. We begin by recalling the  homogeneous  Besov spaces on the usual Euclidean spaces.
	
	On the Euclidean spaces, for $-\infty<s<\infty$ and $0<p,q\leq\infty$, the homogeneous Besov space $\dot{B}^s_{p,q}$ is defined to be the collection of all distributions $f\in \mathcal{S}'/{\mathcal P}$ (tempered distribution modulo polynomials) such that (see \cite{Trible})
	\begin{equation}
		\|f\|_{\dot{B}_{p,q}^s}=\left\{\sum_{j\in \Z}\left\|2^{-js}\varphi_j*f\right\|_p^q \right\}^{1\over q}<\infty
	\end{equation}
	with usual interpretation $p,q=\infty$,	where $\varphi_j\in \mathcal{S}$ (Schwartz function) satisfies
	\begin{equation}
		\begin{aligned}
			&{\supp}(\hat{\varphi}_j)\subset \left\{\xi:2^{j-1}\leq |\xi|\leq 2^j\right\},
			\\ 
			&|\partial^{\alpha}\hat{\varphi}_j(\xi)|\leq C_{\alpha}2^{-j|\alpha|} \qquad\textit{for every multi-index $\alpha$,}
			\\
			&|\hat{\varphi}_j|>C\qquad\qquad \qquad\quad\textit{for~ ${3\over 5}\leq 2^{-j}|x|\leq {5\over 3}$}.
		\end{aligned}
	\end{equation}
	
	Frazier and Jawerth \cite{FJ} obtained two types of decompositions of Besov space, and both decompositions utilize a discrete Calder\'{o}n reproducing formula. The first type is an atomic decomposition. 
	The other decomposition  uses following building blocks $\psi_{\nu{\bf k}}$. Let $\psi $ be a Schwartz function satisfying that ${\rm\bf(i).}$ ${\rm supp}~\hat{\psi}\subset\{\xi\in \R^n:{1\over 2}\leq|\xi|\leq 2\}$, ${\rm\bf(ii).}$ $|\hat{\psi}(\xi)|\geq c>0$ for ${3\over 5}\leq|\xi|\leq {5\over 3}$, ${\rm\bf(iii).}$ $\sum\limits_{\nu\in\Z}\hat{\psi}(2^{\nu}\xi)\hat{\varphi}(2^{\nu}\xi)=1$ for all $\xi\neq0$. The building blocks are
	$$
	\psi_{Q_{\nu{\bf k}}}(x)=|Q_{\nu{\bf k}}|^{{\alpha\over n}-{1\over p}}\psi(2^{-\nu}x-\bf k), \qquad \nu\in\Z, ~{\bf k}\in\Z^n,
	$$
	where $Q_{\nu{\bf k}}=\prod_{i=1}^n{\bf \big[}2^{\nu}k_i,2^{\nu}(k_i+1){\bf\big]}$. The second decomposition utilizes the following discrete reproducing formula:
	\begin{equation}\label{FJ reproducing}
		f(x)=\sum_{\nu\in\Z}\sum_{{\bf k}\in\Z^n}2^{\nu n({\alpha\over n}-{1\over p})}\psi_{Q_{\nu{\bf k}}}(x)~\varphi_{\nu}*f(2^{\nu}{\bf k}),
	\end{equation}
	and the following norm equivalence was proved:
	\begin{equation}\label{FJ}
		\|f\|_{\dot{B}_{p,q}^s}\sim\left(\sum_{\nu\in\Z}2^{{\nu}s}\left(\sum_{{\bf k}\in\Z^n}|\varphi_{\nu}*f(2^{\nu}{\bf k})|^p|Q_{\nu\bf k}|\right)^{q\over p}\right)^{1\over q}.
	\end{equation}
	
	In  \cite{CW}, Coifman and Weiss introduced spaces of homogeneous type. The  Besov spaces on  spaces of homogeneous type have also been studied. The homogeneous Besov spaces were considered in \cite{H-M-Y} and \cite{6-12}, and the inhomogeneous  Besov spaces were studied in \cite{6-9}. In \cite{6}, the authors introduced a new norm and proved its equivalence to the norm in \cite{6-9}. Using this new norm, they successfully established the Besov space for $p,q\leq 1$ on spaces of homogeneous type.

	In this paper, we consider the Besov spaces in the Dunkl setting, where the Fourier transform no longer exists. The fundamental consideration in the Dunkl setting  is to establish a discrete Calder\'{o}n reproducing formula, see \cite{Hp}. It is noteworthy that the discrete Calder\'{o}n reproducing formula in \cite{Hp} derives from the Dunkl Poisson kernel, which involves  both the  Dunkl metric and the Euclidean metric.
	
	Now we would like to introduce the framework of Dunkl setting in \cite{4}  (see also \cite{2}, \cite{7} and \cite{dunkl measure}). Consider the Euclidean space $\R^n$ with the scalar product $\langle x,y\rangle=\sum_{j=1}^{N}x_j y_j $ and  the corresponding norm $\|x\|^2=\langle x,x\rangle$. We set $B(x,r):= \{y\in\R^n: \| x-y\|<r \}$ and denote the Euclidean ball centered at $x$ with radius $r>0$.
	Let  $R$ be  a  root system in $\R^n$ normalized so that $\langle\alpha, \alpha\rangle=2$ for $\alpha \in R$ with $R_+$ a fixed positive subsystem, and  $G$ be the finite reflection group generated by the reflections $\sigma_{\alpha} (\alpha\in R)$, where $\sigma_{\alpha}(x)=x-\langle\alpha,x\rangle \alpha$  for $x\in \R^n.$
	Define the Dunkl metric 
	$$
	d(x,y):=\min_{\sigma\in G}\left\|\sigma(x)-y\right\|
	$$
	as  the distance between  two $G-$orbits  $\mathcal{O}(x)$ and $\mathcal{O}(y)$. Obviously, $d(x,y)\leq\|x-y\|,~d(x,y)=d(y,x)$ and $ d(x,y)\leq d(x,z)+d(z,y)$ for all $x,y,z\in \R^n$. Moreover, $\omega(B(x,r))\sim\omega(B(y,r))$ as  $d(x,y)\sim r$,  and $\omega(B(x,r))\leq\omega(\mathcal{O}(B(x,r)))\leq|G|~\omega(B(x,r))$,  where $\mathcal{O}(B(x,r))=\{y\in \R^n,~d(y,x)<r\}$.
	Let  $\kappa(\alpha)~(\alpha\in R)$  be a non-negative multiplicity function on $R$. The Dunkl measure is defined by
	$$
	d\omega(x):=\prod_{\alpha\in R}\left|\langle\alpha,x\rangle\right|^{{ \kappa}(\alpha)}dx.
	$$
	The number ~$N=n+\sum_{\sigma\in R} \kappa(\alpha)$ ~is called the homogeneous dimension of the system. 
	Then
	$$
	\omega(B(tx,tr))= t^N \omega(B(x,r))  \qquad\textit {for $ x\in\R^n, ~t,r>0,$}
	$$
	and
	\begin{equation}\label{measure of ball}
		\omega(B(x,r))\sim r^n \prod_{\alpha\in R}\left(\left|\langle\alpha,x\rangle\right|+r\right)^{{ \kappa}(\alpha)}\qquad\textit{for $x\in\R^n, r>0.$}
	\end{equation}
	It is important to indicate that for $x\in\R^n, ~r>0, \omega(B(x,r))\ge Cr^N.$
	
	Moreover, for $r_ 1, r_2>0,$  we have
	\begin{equation}\label{doubel condition}
		C^{-1}\left(r_2\over r_1\right)^n\leq  {{\omega(B(x,r_2))}\over {\omega(B(x,r_1))}} \leq C\left(r_2\over r_1\right)^N    \qquad\textit{for $0<r_1<r_2$}.
	\end{equation}
	This implies that $d{\omega(x)}$ satisfies  the doubling and reverse doubling properties, that is, there is a constant $C>0$ such that for all $x\in R^n, ~r>0$ and $\lambda\geq 1,$
	$$
	C^{-1} \lambda^n {\omega(B(x,r))}\leq {\omega(B(x,\lambda r))}\leq C\lambda^N {\omega(B(x,r))}.
	$$
	Let $E(x,y)$ be the associated Dunkl kernel, which was introduced in \cite{5-15}. It is known that $E(x,y)$  has a unique extension to a holomorphic function in $\mathbb C^N  \times\mathbb C^N$. The Dunkl transform, which generalizes  the classical Fourier transform on $R^n$,  is defined on $L^1(d\omega)$  by 
	$$
	{\bf\F} f(\xi)= c_{\kappa}^{-1} \int_{\R^n }f(x)E(x,-i\xi) d\omega(x),
	$$
	where  $c_{\kappa}= \int_{\R^n}e^{{-\|x\|^2}/2} d\omega(x)>0$. 
	The Dunkl transform  was introduced in \cite{4-7} for $\kappa\geq0$ and further studied in \cite{4-5} in a more general context, see also \cite{2-17}.  Particularly,  it satisfies the Plancherel identity,
	i.e.,
	$$\|f\|_2=\|\F f\|_2$$
	for all $f\in L^2(d\omega)\cap L^1(d\omega)$ and the inversion formula  \cite{4-5}. 
	If the multiplicity function $\kappa\equiv 0$, then the  Dunkl transform  boils down to the classical  Fourier transform.
	
	The classical Fourier transform  behaves well with the translation operator. However, the measure  $d\omega(x)$ is no longer  invariant  under the usual  translation. In \cite {4-20}, R\"{o}sler introduced the  Dunkl  translation $\tau_x f$ of $f$, which is defined  by
	$$
	{\bf\tau}_x f(-y)= c_{\kappa}^{-1}\int_{\R^n} E(i\xi, x)E(-i\xi, y)~\F f(\xi) d\omega(\xi)= \F^{-1}(E(i\cdot, x)\F f)(-y)
	$$	
	for   $f\in\mathcal{S} (\R^n)$	 and $x\in \R^n$ ( see also \cite{4-26}). It is easy to see that $\tau_x$ is bounded on $L^2(d\omega)$. However, it is still an open problem that if it is bounded operator on $L^p$ for $p \not=2.$
	
	For $f,g \in L^2(d\omega)$, the Dunkl convolution $f*g$ is defined  by the formula
	$$
	f*g(x)= \int_{R^n}f(y)~\tau_x g(-y) d\omega(y)= \int_{R^n} g(y)~\tau_x f(-y)d\omega(y).
	$$
	
	See \cite{4-20},  \cite{4-26}  and \cite{4-8} for more details related to Dunkl convolution.
	
	The Dunkl operators  ${\bf T}_j$ were introduced in \cite{Dunkl},  which are defined by
	$$
	{\bf T}_j f(x)=\partial_j f(x)+\sum_{\alpha\in {R^+}} {\kappa(\alpha)\over 2}\langle\alpha,e_j\rangle {{f(x)-f(\sigma_{\alpha}(x))}\over \langle\alpha,x\rangle},
	$$
	where $e_1, e_2, \cdots, e_n$ are the standard unit vector of $\R^n$.
	
	The Dunkl Laplacian associated with $R$ and $\kappa$ is the operator $\Delta=\sum_{j=1}^n {\bf T}_j^2$, which is equivalent to
	$$
	\Delta f(x)= \Delta_{\R^n}f(x)+\sum_{\alpha\in R} \kappa(\alpha)\delta_\alpha f(x),
	$$
	where $\delta_\alpha f(x)={\partial_\alpha f(x)\over \langle\alpha,x\rangle}-{\|\alpha\|^2\over 2}{{f(x)-f(\sigma_{\alpha}(x))}\over {\langle\alpha,x\rangle}^2}$.
	
	The operator  $\Delta$ is essentially self-adjoint on $L^2(d\omega)$ (see \cite{4-1}), and generates the semigroup
	$$
	{\bf H}_t f(x)=e^{t\Delta}f(x)=\int_{\R^n} h_t(x,y)f(y) d\omega(y),
	$$
	where the heat kernel  $h_t(x,y)$ is a $C^\infty$ function for all $t>0,~x,y\in\R^n$ and satisfies
	$h_t(x,y)=h_t(y,x)>0$ and $\int_{\R^n}h_t(x,y)d\omega(y)=1.$
	
	The Poisson semigroup  ${\bf P}_t=e^{-t\sqrt{-\Delta}}$ is given by
	$$
	{\bf P}_t f(x)=\pi^{-{1\over2}}\int_0^\infty e^{-u}exp({t^2\over 4u}\Delta) f(x) {du\over{\sqrt u}},
	$$
	and solves the boundary value problem
	\[ \left\{
	\begin{array}{lr}
		(\partial_t^2+\Delta_x)u(x,t)=0,\\
		u(x,0)=f(x),
	\end{array}\right. \]
	in the half-space $\R^{n+1}_+ $  (see \cite{dunkl measure} and \cite{3-31}).
	We note that $\{{\bf P}_t\}_{t\in \R}$ is an approximation  to the identity on $L^2(d\omega)$, i.e.
	$\lim\limits_{t\to 0} {\bf P}_t(f)(x) = f(x)$  and~$\lim\limits_{t\to \infty} {\bf P}_t(f)(x) =0 ~  {\rm for} ~f\in L^2(d\omega).$

	Throughout this paper, a kernel of an operator  $\bf{G}$ is a distribution $G(x,y)$ for $x,y\in \R^n$ such that 
	$$
	{\bf G}f(x)=\int_{\R^n}G(x,y)f(y)d\omega(y).
	$$
	If an  operator is  represented by a letter in bold type, then its kernel will be represented by the same letter in regular type.

	Now we tend to discuss homogeneous Besov spaces in Dunkl setting. By the doubling and reverse doubling properties, $(\R^n, ||\cdot||, d\omega)$ can be viewed as a space   of   homogeneous type  in the sense of Coifman and Weiss.  Then   by \cite{H-M-Y}, the homogeneous Besov spaces ${\dot{B}_{p,cw}^{\alpha,q}} (d\omega)$ are established automatically.  In \cite{Bui}, the author established  Besov spaces ${\dot{B}_{p,q}^{\alpha,\Delta}}(d\omega)$ by the  Dunkl Laplacian $\Delta$,  which corresponds to the Dunkl metric $d(x,y)$, and proved the equivalence of ${\dot{B}_{p,q}^{\alpha,\Delta}}(d\omega)$ and ${\dot{B}_{p,cw}^{\alpha,q}}(d\omega).$

	In this paper, we give a new  characterization of Dunkl-Besov spaces.  The new ideas of our method are (i) utilize a new discrete Calder\'{o}n reproducing formula on $L^2(d\omega)$ in \cite{Hp}; (ii) introduce new test functions and distributions; (iii) establish a weak-type discrete Calder\'{o}n reproducing formula on Dunkl-Besov spaces.  

	Our motivation is the recent development of the Dunkl-Hardy space in \cite{Hp}, where the authors established a new discrete Calder\'{o}n reproducing formula by the Dunkl-Poisson kernel ${P}_t(x,y)$, which involves both the Euclidean metric and the Dunkl metric.  
More specifically, for  $k\in \Z$, let  $Q_d^k$ be the collections of dyadic cubes with side length $2^{-k-{M_0}}$ with some positive integer $M_0$.  Let 
	\begin{equation}
		{\bf D}_k:={\bf P}_{2^{-k}}-{\bf P}_{2^{-k-1}}
	\end{equation} 
	with the kernel $D_k(x,y)=P_{2^{-k}}(x,y)-P_{2^{-k-1}}(x,y)$,  and let 
	\begin{equation}
		{\bf D}_k^{M_0}:=\sum_{l=k-{M_0}}^{k+M_0} {\bf D}_l={\bf P}_{2^{-k+M_0}}-{\bf P}_{2^{-k-M_0-1}}
	\end{equation}
	with the kernel $D_k^{M_0}(x,y)=P_{2^{-k+M_0}}(x,y)-P_{2^{-k-M_0-1}}(x,y).$ In \cite{Hp}, the authors proved the following formula in $L^2(d\omega)$:
	\begin{equation}\label{fdk}
	f(x)=\sum_{k\in \Z}\sum_{Q\in Q_d^k} \omega(Q)D^{M_0}_k(x,x_Q){\bf D}_k(h)(x_Q),
	\end{equation}
	where the series converges in $L^2(d\omega), x_Q$  can be any fixed point in $Q\in Q_d^k$, $\|h\|_2\sim \|f\|_2,$ and $h$ is the preimage  of $f$ under the action of following invertible bounded  operator on $L^2(d\omega)$:
	\begin{equation}\label{T_{M_0}}
	{\bf T}_{M_0}(f)=\sum_{k\in \Z}\sum_{Q\in Q_d^k} \omega(Q)D^{M_0}_k(x,x_Q){\bf D}_k(f)(x_Q).
	\end{equation}
	The reproducing formula \eqref{fdk} leads to define the Dunkl-Besov norm for $f\in L^2(d\omega)$ as follows:
	\begin{equation}\label{Besov norm}
	\left\|f\right\|_{\dot{B}_{p,d}^{\alpha,q}}=\left\{\sum_{k\in\Z}2^{k\alpha q}\left\{\sum_{Q\in Q_d^k}\omega(Q)\left|{\bf D}_k(f)(x_Q)\right|^p\right\}^{q\over p}\right\}^{1\over q}.
	\end{equation}
	The first main result in this paper is the following
	
	\begin{thm}\label{Dunkl reproducing}
		Suppose that  $|\alpha|<1$,  $\max\left\{\frac{N}{N+1},\frac{N}{N+1+\alpha}\right\}< p<\infty$ and $0<q<\infty$. If $f\in L^2(d\omega)$ with ${\|f\|_{\dot{B}_{p,d}^{\alpha,q}}~<\infty}$, then there exists a function $h\in L^2(d\omega)$, such that $\|f\|_{2}\sim\|h\|_{2}$,  $\|f\|_{\dot{B}_{p,d}^{\alpha,q}}\sim\|h\|_{\dot{B}_{p,d}^{\alpha,q}}$ and
		\begin{equation}\label{f}
	f(x)=\sum_{k\in\Z}\sum_{Q\in Q_d^k} \omega(Q)D^{M_0}_k(x,x_Q){\bf D}_k(h)(x_Q),
	\end{equation}
		where the series converges in the $L^2(d\omega) $ norm and the Dunkl-Besov  space norm.
	\end{thm}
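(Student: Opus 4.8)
The plan is to realize $h$ as the preimage ${\bf T}_{M_0}^{-1}f$ of $f$ under the operator \eqref{T_{M_0}}, and to show that ${\bf T}_{M_0}$ is bounded with bounded inverse not only on $L^2(d\omega)$ but also on the Dunkl--Besov space. Since \cite{Hp} already gives that ${\bf T}_{M_0}$ is invertible on $L^2(d\omega)$ with $\|{\bf T}_{M_0}g\|_2\sim\|g\|_2$, the equivalence $\|f\|_2\sim\|h\|_2$ is immediate once $h:={\bf T}_{M_0}^{-1}f$ is set, and the reproducing formula \eqref{f} is nothing but $f={\bf T}_{M_0}h$ together with the $L^2$-convergence recorded in \eqref{fdk}. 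Thus the whole theorem reduces to two assertions: (a) ${\bf T}_{M_0}:\dot{B}^{\alpha,q}_{p,d}\to\dot{B}^{\alpha,q}_{p,d}$ is bounded, and (b) its inverse is again bounded on $\dot{B}^{\alpha,q}_{p,d}$. Both will follow from a single family of almost-orthogonality estimates.

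To prove (a) I would test ${\bf T}_{M_0}f$ against the analyzing operators ${\bf D}_{k'}$ used to build the norm \eqref{Besov norm}. Applying ${\bf D}_{k'}$ to \eqref{T_{M_0}} and evaluating at a center $x_{Q'}$ with $Q'\in Q_d^{k'}$ gives
$$
{\bf D}_{k'}({\bf T}_{M_0}f)(x_{Q'})=\sum_{k\in\Z}\sum_{Q\in Q_d^k}\omega(Q)\,\big({\bf D}_{k'}{\bf D}_k^{M_0}\big)(x_{Q'},x_Q)\,{\bf D}_k(f)(x_Q),
$$
so ${\bf T}_{M_0}$ acts on the coefficient array $\{{\bf D}_k(f)(x_Q)\}$ through the matrix $a_{Q',Q}:=\omega(Q)({\bf D}_{k'}{\bf D}_k^{M_0})(x_{Q'},x_Q)$. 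The heart of the matter is to show this matrix is \emph{almost diagonal}: using the size, smoothness and cancellation of the Dunkl--Poisson differences $D_k$ and $D_k^{M_0}$ from \cite{Hp} — which are governed \emph{simultaneously} by the Euclidean metric $\|\cdot\|$ and the Dunkl metric $d(\cdot,\cdot)$ — one obtains, for some $\var>0$,
$$
\big|\big({\bf D}_{k'}{\bf D}_k^{M_0}\big)(x_{Q'},x_Q)\big|\lesssim 2^{-|k-k'|\var}\,\frac{1}{\omega\!\big(B(x_{Q'},2^{-(k\wedge k')})\big)}\Big(\frac{2^{-(k\wedge k')}}{2^{-(k\wedge k')}+d(x_{Q'},x_Q)}\Big)^{N+1+\var},
$$
accompanied by a parallel decay factor in $\|x_{Q'}-x_Q\|$. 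The exponent $N+1$ reflects that the Poisson kernel carries one order of smoothness, and $|\alpha|<1$ is precisely what lets this order dominate the weight $2^{k'\alpha}$ occurring in \eqref{Besov norm}, producing a net geometric decay $2^{-|k-k'|(\var)}$ after reweighting.

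With the almost-diagonal bound in hand, (a) becomes the boundedness of an almost-diagonal matrix on the discrete Besov sequence space. I would split the double sum into $k\le k'$ and $k>k'$, estimate each spatial sum $\sum_{Q}\omega(Q)\,|a_{Q',Q}|\,|{\bf D}_k(f)(x_Q)|$ by discrete H\"older's inequality together with the doubling and reverse-doubling relations \eqref{doubel condition}, and then sum the resulting geometric series in $|k-k'|$. Here the hypotheses $p>\max\{N/(N+1),N/(N+1+\alpha)\}$ enter: they guarantee that the $(N+1+\var)$-decaying two-metric kernels are summable against $\ell^p$ data, in the two regimes corresponding to the unweighted spatial sum (threshold $N/(N+1)$) and the $\alpha$-weighted one (threshold $N/(N+1+\alpha)$), while $0<q<\infty$ imposes no restriction since the outer $k$-summation is purely geometric. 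This yields $\|{\bf T}_{M_0}f\|_{\dot{B}^{\alpha,q}_{p,d}}\lesssim\|f\|_{\dot{B}^{\alpha,q}_{p,d}}$.

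For (b) I would apply the same estimates to $I-{\bf T}_{M_0}$, which measures the discretization error in passing from the continuous operator $\sum_k{\bf D}_k^{M_0}{\bf D}_k$ underlying \eqref{fdk} to its Riemann sum \eqref{T_{M_0}}; this error carries an extra gain $2^{-\delta M_0}$, so choosing $M_0$ large makes $\|I-{\bf T}_{M_0}\|_{\dot{B}^{\alpha,q}_{p,d}\to\dot{B}^{\alpha,q}_{p,d}}<1$. The Neumann series $\sum_{j\ge0}(I-{\bf T}_{M_0})^j$ then converges in operator norm on $\dot{B}^{\alpha,q}_{p,d}$ and represents ${\bf T}_{M_0}^{-1}$; as it also converges on $L^2(d\omega)$, the two inverses agree on $L^2(d\omega)\cap\dot{B}^{\alpha,q}_{p,d}$, whence $h={\bf T}_{M_0}^{-1}f$ lies in both spaces with $\|h\|_{\dot{B}^{\alpha,q}_{p,d}}\sim\|f\|_{\dot{B}^{\alpha,q}_{p,d}}$. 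Convergence of \eqref{f} in the Besov norm follows by applying the estimate of step (a) to the geometrically small tails of the series, while $L^2$-convergence is inherited from \eqref{fdk}. I expect the decisive obstacle to be the almost-orthogonality estimate itself: because the Dunkl translation $\tau_x$ is not known to be $L^p$-bounded for $p\neq2$, one cannot argue through the convolution structure and must extract the decay directly from the pointwise kernel bounds of $D_k,D_k^{M_0}$, tracking the interplay of the two metrics $\|\cdot\|$ and $d$ throughout, and controlling the spatial summation uniformly down to the critical exponents $N/(N+1)$ and $N/(N+1+\alpha)$, where $p<1$ and the spaces are only quasi-normed.
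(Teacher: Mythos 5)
Your overall strategy coincides with the paper's: set $h=({\bf T}_{M_0})^{-1}f$, reduce the theorem to the boundedness of ${\bf T}_{M_0}$ and of its inverse on the Besov scale, obtain the inverse by making ${\bf R}_{M_0}=I-{\bf T}_{M_0}$ a contraction for large $M_0$, and drive all estimates by feeding an almost-orthogonality bound into a discrete almost-diagonal summation lemma. Your part (a) is exactly the paper's computation of ${\bf D}_{k'}({\bf T}_{M_0}f)(x_{Q'})$ followed by {\bf Lemma \ref{iterate}}, and the convergence of the tails of \eqref{f} is handled the same way in both arguments.

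The genuine gap is in part (b). To bound $\left\|{\bf R}_{M_0}f\right\|_{\dot{B}^{\alpha,q}_{p,d}}$ you must express ${\bf D}_{k'}({\bf R}_{M_0}f)(x_{Q'})$ as an almost-diagonal matrix acting on some discrete coefficient array of $f$; but ${\bf R}_{M_0}=I-{\bf T}_{M_0}$ is not presented as a sum over dyadic cubes, so there is no matrix $a_{Q',Q}$ to estimate and ``applying the same estimates'' has no content yet. Expanding $f$ by \eqref{fdk} itself would be circular, since its coefficients involve $h=({\bf T}_{M_0})^{-1}f$, whose Besov norm is precisely what you are trying to control. The paper resolves this by importing an \emph{independent} discrete Calder\'on reproducing formula, namely the Coifman--Weiss formula \eqref{CW reproducing} on the homogeneous space $(\R^n,\|\cdot\|,d\omega)$: it writes the coefficients of ${\bf R}_{M_0}f$ through the kernels $E_{k'}R_{M_0}\widetilde{E}_k(x_{Q'},x_Q)$, invokes the almost-orthogonality estimate for Dunkl--Calder\'on--Zygmund operators ({\bf Lemma \ref{almost orthogonal estimate with dcz}}) to inject the factor $\|{\bf R}_{M_0}\|_{dcz}\leq C2^{-M_0\delta}$ into the matrix bound, and therefore works with the auxiliary norm $\|\cdot\|_{\dot{B}^{\alpha,q}_{p,cw}}$, whose equivalence with $\|\cdot\|_{\dot{B}^{\alpha,q}_{p,d}}$ must itself be proved (Steps 2--3 of {\bf Lemma \ref{Tbounded}}, again via {\bf Lemma \ref{iterate}}). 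This intermediate reproducing formula and the accompanying norm equivalence are the missing ingredients in your sketch; without them the assertion that the $2^{-\delta M_0}$ gain survives in the Besov operator norm is unsupported.
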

	
	Based on the above theorem, we prove the following duality estimates which are crucial for establishing the Besov spaces in the Dunkl setting. For this purpose, we first define the $dual\;indexes$ $\alpha',p',q'$ of $\alpha,p,q$ as follows:
	\begin{defn}\label{index}
		We define
		\[ \alpha'=
	\begin{cases}
			-\alpha,~~~~~~~~~~~~~~~~~~~~~ \textit{if  $1<p<\infty$},\\
			-\alpha+N({1\over p}-1),~~\textit{if $ 0<p\leq 1$},
		\end{cases}
		~~~~~~~p'=  \begin{cases}
			{p\over{p-1}},~~\textit{if  $1<p<\infty$},\\
			\infty,~~~~\textit{if  $0<p\leq 1$},
		\end{cases} \]
		and
		\[q'=  \begin{cases}
			{q\over{q-1}},~~\textit{if $1<q<\infty$},\\
			\infty,~~~~\textit{if $ 0<q\leq 1$}.
		\end{cases} \]
	\end{defn}

	Denote $L^2\cap\dot{B}_{p,d}^{\alpha,q}=\left\{ f\in L^2(d\omega):~\|f\|_{\dot{B}_{p,d}^{\alpha,q}}<\infty\right\}$. The duality estimates are given by the following:
	\begin{thm}\label{dual prop}
		Suppose  $|\alpha| <1$, $\max\left\{{N\over {N+1}},~{N\over{N+1+\alpha}}\right\}<p<\infty$ and $0<q<\infty$. If $f\in L^2\cap \dot{B}_{p,d}^{\alpha,q}$ and $g\in L^2\cap \dot{B}_{p',d}^{\alpha',q'}$,~then
		\begin{equation}\label{dual}
		\left|\langle f,g\rangle\right|~\leq~\left\|f\right\| _{\dot{B}_{p,d}^{\alpha,q}}    \left\|g\right\| _{\dot{B}_{p',d}^{\alpha',q'}}.
		\end{equation}
	\end{thm}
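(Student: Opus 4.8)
The plan is to push the $L^2(d\omega)$ pairing through the discrete reproducing formula \eqref{f} and then treat the ranges $1<p<\infty$ and $0<p\le1$ separately. First I would apply Theorem \ref{Dunkl reproducing} to $f$, obtaining $h\in L^2(d\omega)$ with $\|h\|_{\dot B_{p,d}^{\alpha,q}}\sim\|f\|_{\dot B_{p,d}^{\alpha,q}}$ and
\[
f=\sum_{k\in\Z}\sum_{Q\in Q_d^k}\omega(Q)\,D_k^{M_0}(\cdot,x_Q)\,{\bf D}_k(h)(x_Q),
\]
the series converging in $L^2(d\omega)$. Since $g\in L^2(d\omega)$, the functional $\langle\cdot,g\rangle$ is $L^2$-continuous and may be applied termwise to the partial sums. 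The Poisson kernel is symmetric (inherited from $h_t(x,y)=h_t(y,x)$ via subordination), hence so is $D_k^{M_0}(x,y)=P_{2^{-k+M_0}}(x,y)-P_{2^{-k-M_0-1}}(x,y)$, so $\langle D_k^{M_0}(\cdot,x_Q),g\rangle={\bf D}_k^{M_0}(g)(x_Q)$. Estimating the limit by the full absolute sum gives
\[
|\langle f,g\rangle|\le\sum_{k\in\Z}\sum_{Q\in Q_d^k}\omega(Q)\,|{\bf D}_k(h)(x_Q)|\,|{\bf D}_k^{M_0}(g)(x_Q)|=:\Sigma .
\]
Write $A_k:=\big(\sum_{Q\in Q_d^k}\omega(Q)|{\bf D}_k(h)(x_Q)|^p\big)^{1/p}$, so that $\|h\|_{\dot B_{p,d}^{\alpha,q}}=\big(\sum_k(2^{k\alpha}A_k)^q\big)^{1/q}$, and the task is to bound $\Sigma$.

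For $1<p<\infty$ one has $\alpha'=-\alpha$ and $p'=p/(p-1)$. For fixed $k$, H\"older in $Q$ with exponents $p,p'$ bounds the inner sum by $A_k\,B_k$, where $B_k:=\big(\sum_{Q}\omega(Q)|{\bf D}_k^{M_0}(g)(x_Q)|^{p'}\big)^{1/p'}$. Since $\alpha+\alpha'=0$, I then apply H\"older in $k$ with exponents $q,q'$ (resp.\ the $\ell^q\hookrightarrow\ell^1$ bound paired with $\ell^\infty$ when $0<q\le1$) to get $\Sigma\lesssim\|h\|_{\dot B_{p,d}^{\alpha,q}}\big(\sum_k(2^{k\alpha'}B_k)^{q'}\big)^{1/q'}$.

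For $0<p\le1$ one has $p'=\infty$ and $\alpha'=-\alpha+N(1/p-1)$. Bounding $|{\bf D}_k^{M_0}(g)(x_Q)|\le\sup_{Q}|{\bf D}_k^{M_0}(g)(x_Q)|=:S_k$ and using $\sum_Q c_Q\le(\sum_Q c_Q^p)^{1/p}$ with $c_Q=\omega(Q)|{\bf D}_k(h)(x_Q)|$, together with the uniform lower bound $\omega(Q)\gtrsim 2^{-kN}$ for $Q\in Q_d^k$ (from $\omega(B(x,r))\ge Cr^N$, the negative exponent $p-1\le0$ making the largest contribution come from the smallest cubes), one finds $\sum_Q\omega(Q)|{\bf D}_k(h)(x_Q)|\lesssim 2^{kN(1/p-1)}A_k$. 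Since $\alpha+\alpha'=N(1/p-1)$, this gives $\Sigma\lesssim\sum_k(2^{k\alpha}A_k)(2^{k\alpha'}S_k)\lesssim\|h\|_{\dot B_{p,d}^{\alpha,q}}\big(\sum_k(2^{k\alpha'}S_k)^{q'}\big)^{1/q'}$ after the same H\"older-in-$k$ step.

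In both cases the remaining factor involves ${\bf D}_k^{M_0}=\sum_{l=k-M_0}^{k+M_0}{\bf D}_l$ rather than the single-scale ${\bf D}_k$. As $|l-k|\le M_0$ and $\alpha'$ is fixed, the triangle inequality in $\ell^{p'}(\omega)$ (resp.\ in the supremum) reduces the $M_0$-coefficient at scale $k$ to a finite window $\{l:|l-k|\le M_0\}$ of single-scale coefficients; Young's inequality for convolution with the window $\{-M_0,\dots,M_0\}$ (or the crude $\ell^{q'}\hookrightarrow\ell^{q'}$ bound when $q'<1$) then yields $\big(\sum_k(2^{k\alpha'}B_k)^{q'}\big)^{1/q'}\lesssim\|g\|_{\dot B_{p',d}^{\alpha',q'}}$, and similarly for $S_k$. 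Combining with $\|h\|_{\dot B_{p,d}^{\alpha,q}}\sim\|f\|_{\dot B_{p,d}^{\alpha,q}}$ gives \eqref{dual}. The delicate point is the range $0<p\le1$: one must extract \emph{exactly} the measure shift $N(1/p-1)$, which is what forces the uniform lower bound on $\omega(Q)$ and is precisely the reason $\alpha'$ carries the correction $N(1/p-1)$; one must also check that the termwise pairing and the passage to the absolute sum $\Sigma$ are legitimate, which is justified a posteriori by the finiteness of $\Sigma$.
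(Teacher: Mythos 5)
Your overall architecture is the same as the paper's: pass $\langle f,g\rangle$ through the discrete reproducing formula of Theorem \ref{Dunkl reproducing}, obtain the bilinear sum $\sum_{k}\sum_{Q}\omega(Q)\,{\bf D}_k(h)(x_Q)\,{\bf D}_k^{M_0}(g)(x_Q)$, apply H\"older in $Q$ and then in $k$ for $1<p<\infty$, and for $0<p\le 1$ use $\ell^1\hookrightarrow\ell^p$ together with the lower bound $\omega(Q)\gtrsim 2^{-kN}$ to produce exactly the shift $N(1/p-1)$ in $\alpha'$. All of that matches the paper's Cases 1--4 (including the embedding $\|f\|_{\dot B_{1,d}^{\alpha-N(1/p-1),q}}\le C\|f\|_{\dot B_{p,d}^{\alpha,q}}$, which is the paper's \eqref{embedding result}).

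The gap is in your final step, where you claim that the triangle inequality over the window $\{l:|l-k|\le M_0\}$ plus Young's inequality reduces $\bigl(\sum_k(2^{k\alpha'}B_k)^{q'}\bigr)^{1/q'}$ to $\|g\|_{\dot B_{p',d}^{\alpha',q'}}$. After the triangle inequality you are left with quantities of the form $\bigl(\sum_{Q\in Q_d^k}\omega(Q)|{\bf D}_l(g)(x_Q)|^{p'}\bigr)^{1/p'}$ with $|l-k|\le M_0$, i.e.\ the single-scale operator ${\bf D}_l$ sampled at points $x_Q$ attached to the grid $Q_d^k$ of side length $2^{-k-M_0}$, whereas the Besov coefficient of $g$ at level $l$ is $\bigl(\sum_{Q'\in Q_d^l}\omega(Q')|{\bf D}_l(g)(x_{Q'})|^{p'}\bigr)^{1/p'}$ with arbitrary fixed points $x_{Q'}$ in the grid $Q_d^l$. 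These two quantities are not related by any triangle or convolution inequality; their comparability is a Plancherel--P\'olya-type statement (independence of the discrete norm from the choice of grid and sampling points at comparable scales), and that is precisely what requires the machinery the paper invokes at this point: one expands $g$ through the discrete Calder\'on reproducing formula, uses the almost-orthogonality estimate for the composed kernels (as in \eqref{EkDk}), and applies Lemma \ref{iterate} --- this is what the paper means by ``similar to Step 2 in Lemma \ref{Tbounded}'' when it asserts \eqref{2claim}. The same issue affects your $\sup_Q$ quantity $S_k$ in the range $0<p\le 1$. So the claim you need is true, and your proof becomes complete once this step is replaced by (or justified via) the Lemma \ref{iterate} argument; as written, however, the reduction to $\|g\|_{\dot B_{p',d}^{\alpha',q'}}$ is not proved.
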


	By duality estimate, a function $f\in L^2\cap\dot{B}_{p,d}^{\alpha,q}$ can be viewed as a distribution on $L^2\cap \dot{B}_{p',d}^{\alpha',q'}$. Hence, let $L^2\cap \dot{B}_{p',d}^{\alpha',q'}$ be the space of new  test functions. We define the Besov space $\dot{\B}_{p,d}^{\alpha,q}(d\omega)$ to be the subspace of $(L^2\cap \dot{B}_{p',d}^{\alpha',q'})'$
	admitting a decomposition  with the building blocks $D_k^{M_0}(x,x_Q)$ in  \eqref{fdk}. More precisely, we will show the following:
	
	\begin{thm}\label{distribution converge}
		Suppose $|\alpha|<1$, $\max\left\{\frac{N}{N+1},\frac{N}{N+1+\alpha}\right\}<p<\infty$ and $0<q<\infty$. Suppose $\{f_n\}_{n=1}^{\infty}$ is a sequence in $L^2(d\omega)$ with
		$ \|f_n-f_m\|_{{\dot{B}_{p,d}^{\alpha,q}} }\to 0$ as $n,m\to \infty$. Then  there exist $f,h$ as distributions in $\left(L^2\cap\dot{B}^{\alpha',q'}_{p',d}\right)'$  with    $\|f\|_{\dot{B}^{\alpha,q}_{p,d}}\sim \|h\|_{\dot{B}^{\alpha,q}_{p,d}}$,  such that for each function $g\in L^2\cap\dot{B}^{\alpha',q'}_{p',d}$,
		\begin{enumerate}[(i)]
			\item
			$\lim\limits_{n\rightarrow\infty}\langle f_n, g\rangle=\langle f,g\rangle$;
			\item
			$\|f\|_{\dot{B}^{\alpha,q}_{p,d}}=\lim\limits_{n\rightarrow\infty}\|f_n\|_{\dot{B}^{\alpha,q}_{p,d}}$;
			\item
			$f$ has a following weak-type discrete Calder\'{o}n reproducing formula in the distribution sense:				
			\begin{equation}\label{weak type reproducing}
				\left\langle f,g\right\rangle=\left\langle\sum\limits_{k\in\Z}\sum\limits_{Q\in Q_d^k}\omega(Q)D_k^{M_0}(\cdot,x_Q){\bf D}_k(h)(x_Q), g\right\rangle
				=\sum_{k\in\Z}\sum_{Q\in Q_d^k} \omega(Q){\bf D}_k(h)(x_Q) {\bf D}_k^{M_0}(g)(x_Q),
			\end{equation}
	where the last series converges absolutely.
		\end{enumerate}
	\end{thm}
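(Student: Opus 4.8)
The plan is to construct $f$ and $h$ as distributions by passing to the limit inside the duality pairing, and then to transfer every property of the $f_n$ to $f$ using only the two tools already available: the duality estimate (Theorem~\ref{dual prop}) and the discrete reproducing formula (Theorem~\ref{Dunkl reproducing}). For the existence statement (i), fix a test function $g\in L^2\cap\dot{B}_{p',d}^{\alpha',q'}$. Since each difference $f_n-f_m$ lies in $L^2\cap\dot{B}_{p,d}^{\alpha,q}$, the estimate \eqref{dual} gives
$$\left|\langle f_n,g\rangle-\langle f_m,g\rangle\right|\le\|f_n-f_m\|_{\dot{B}_{p,d}^{\alpha,q}}\,\|g\|_{\dot{B}_{p',d}^{\alpha',q'}}\to 0,$$
so $\{\langle f_n,g\rangle\}_n$ is Cauchy in $\mathbb{C}$; set $\langle f,g\rangle:=\lim_n\langle f_n,g\rangle$. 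Linearity is clear, and the same estimate together with $\sup_n\|f_n\|_{\dot{B}_{p,d}^{\alpha,q}}<\infty$ (a Cauchy sequence is bounded) shows $f\in(L^2\cap\dot{B}_{p',d}^{\alpha',q'})'$. To produce $h$, apply Theorem~\ref{Dunkl reproducing} to each $f_n$ to obtain $h_n={\bf T}_{M_0}^{-1}f_n\in L^2(d\omega)$ with $\|f_n\|_{\dot{B}_{p,d}^{\alpha,q}}\sim\|h_n\|_{\dot{B}_{p,d}^{\alpha,q}}$; by linearity $h_n-h_m={\bf T}_{M_0}^{-1}(f_n-f_m)$, and applying the norm equivalence of Theorem~\ref{Dunkl reproducing} to $f_n-f_m$ shows $\{h_n\}$ is Cauchy in $\dot{B}_{p,d}^{\alpha,q}$, so the same duality argument defines $h\in(L^2\cap\dot{B}_{p',d}^{\alpha',q'})'$ via $\langle h,g\rangle:=\lim_n\langle h_n,g\rangle$.

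For (ii), note that each building block $D_k(x_Q,\cdot)$ is a difference of Dunkl-Poisson kernels and hence lies in the test space $L^2\cap\dot{B}_{p',d}^{\alpha',q'}$, so for any distribution $F$ we may set ${\bf D}_k(F)(x_Q):=\langle F,D_k(x_Q,\cdot)\rangle$, extending the quasi-norm \eqref{Besov norm} to $(L^2\cap\dot{B}_{p',d}^{\alpha',q'})'$. Taking $g=D_k(x_Q,\cdot)$ in (i) gives the pointwise convergence ${\bf D}_k(f_n)(x_Q)\to{\bf D}_k(f)(x_Q)$ for every $k,Q$, and likewise for $h$. Because \eqref{Besov norm} is an $\ell^q(\ell^p)$ sum against counting measure, Fatou's lemma yields $\|f\|_{\dot{B}_{p,d}^{\alpha,q}}\le\liminf_n\|f_n\|_{\dot{B}_{p,d}^{\alpha,q}}$; applying the same lemma to ${\bf D}_k(f_n-f)(x_Q)=\lim_m{\bf D}_k(f_n-f_m)(x_Q)$ gives $\|f_n-f\|_{\dot{B}_{p,d}^{\alpha,q}}\le\liminf_m\|f_n-f_m\|_{\dot{B}_{p,d}^{\alpha,q}}\to 0$. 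The $\rho$-triangle inequality for \eqref{Besov norm} with $\rho=\min\{1,p,q\}$ then forces $\|f_n\|_{\dot{B}_{p,d}^{\alpha,q}}\to\|f\|_{\dot{B}_{p,d}^{\alpha,q}}$, which is (ii); the same reasoning for $h$, together with $\|f_n\|_{\dot{B}_{p,d}^{\alpha,q}}\sim\|h_n\|_{\dot{B}_{p,d}^{\alpha,q}}$ with uniform constants, yields $\|f\|_{\dot{B}_{p,d}^{\alpha,q}}\sim\|h\|_{\dot{B}_{p,d}^{\alpha,q}}$.

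For (iii), Theorem~\ref{Dunkl reproducing} applied to the fixed $f_n$, together with the symmetry $D_k^{M_0}(x,x_Q)=D_k^{M_0}(x_Q,x)$, gives the exact identity
$$\langle f_n,g\rangle=\sum_{k\in\Z}\sum_{Q\in Q_d^k}\omega(Q)\,{\bf D}_k(h_n)(x_Q)\,{\bf D}_k^{M_0}(g)(x_Q).$$
The right-hand side converges absolutely, uniformly in $n$: the summation estimate behind the duality bound \eqref{dual} controls $\sum_{k,Q}\omega(Q)|{\bf D}_k(h_n)(x_Q)||{\bf D}_k^{M_0}(g)(x_Q)|$ by $C\|h_n\|_{\dot{B}_{p,d}^{\alpha,q}}\|g\|_{\dot{B}_{p',d}^{\alpha',q'}}$, and the same bound with $h_n-h$ in place of $h_n$ controls the tails. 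Since $\|h_n-h\|_{\dot{B}_{p,d}^{\alpha,q}}\to 0$ by the analogue of (ii) for $h$ and ${\bf D}_k(h_n)(x_Q)\to{\bf D}_k(h)(x_Q)$ pointwise, we may pass to the limit term by term; the left side tends to $\langle f,g\rangle$ by (i), establishing \eqref{weak type reproducing} with an absolutely convergent right-hand side.

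The main obstacle is the claim, used throughout, that each building block $D_k(x_Q,\cdot)$ genuinely belongs to the test space $L^2\cap\dot{B}_{p',d}^{\alpha',q'}$, so that the distributional coefficients ${\bf D}_k(f)(x_Q)$ are well defined and satisfy ${\bf D}_k(f)(x_Q)=\lim_n{\bf D}_k(f_n)(x_Q)$. This reduces to estimating the $\dot{B}_{p',d}^{\alpha',q'}$-quasi-norm of the Poisson-kernel differences $P_{2^{-k}}(\cdot,x_Q)-P_{2^{-k-1}}(\cdot,x_Q)$, which requires the Dunkl-specific size, smoothness, and cancellation bounds of $D_k$ in both the Euclidean and the Dunkl metric, and is exactly the place where the dual-index relations of Definition~\ref{index} must be matched against the doubling exponents $n$ and $N$. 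A secondary technical point is the uniform-in-$n$ absolute convergence in (iii), which I handle by reusing the summation estimate underlying Theorem~\ref{dual prop} rather than re-deriving it.
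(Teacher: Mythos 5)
Your proposal is correct and follows essentially the same route as the paper: the duality estimate of Theorem~\ref{dual prop} to define $f$ and $h=\lim_n({\bf T}_{M_0})^{-1}f_n$ as distributions, Fatou's lemma on the coefficients ${\bf D}_k(\cdot)(x_Q)$ for the norm convergence, and passage to the limit in the reproducing formula for $f_n$ using $\|h_n-h\|_{\dot{B}_{p,d}^{\alpha,q}}\to 0$ together with the summation estimate behind \eqref{dual} for the absolute convergence. The one ingredient you flag as an obstacle --- that $D_k(x_Q,\cdot)$ lies in the test space --- is exactly the content of the paper's Lemma~\ref{basis} on smooth molecules, so it is already available and nothing further is needed.
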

	
		It is ready to define the Dunkl-Besov space
	$\dot{\B}_{p,d}^{\alpha,q}(d\omega)$ as follows:
	\begin{defn}\label{inf}
		For $|\alpha|<1$, $\max\left\{\frac{N}{N+1},\frac{N}{N+1+\alpha}\right\}<p<\infty$ and $0<q<\infty$, the Dunkl-Besov space
		$\dot{\B}_{p,d}^{\alpha,q}(d\omega)$ is defined to be  the collection of all distributions $f\in(L^2\cap \dot{B}_{p',d}^{\alpha',q'})'$ such that
		$$
		f=\sum_{k\in\Z}\sum_{Q\in Q_d^k}\omega(Q)D_k^{M_0}(\cdot,x_Q)\lambda_Q
		$$
		with $\left\{\sum\limits_{k\in\Z} 2^{k\alpha q} \left\{\sum\limits_{Q\in Q_d^k}\omega (Q){\left|\lambda_{Q}\right| ^p}\right\}^{q\over p} \right\} ^{1\over q}<\infty$, where the series converges in the sense of distributions.
	If $f\in \dot{\B}_{p,d}^{\alpha,q}(d\omega)$, ~ the norm of $f$ is defined by
		\begin{equation}\label{finf}
			\left\|f\right\|_{\dot{\B}^{\alpha,q}_{p,d}}=\inf\left\{\left\{\sum_{k\in\Z} 2^{k\alpha q} \left\{\sum_{Q\in Q_d^k}\omega (Q){\left|\lambda_{Q}\right| ^p}\right\}^{q\over p} \right\} ^{1\over q}\right\},
		\end{equation}		
		where the infimum is taken over all $\{\lambda_Q\}$ such that $f(x)=\sum\limits_{k\in\Z}\sum\limits_{Q\in Q_d^k}\omega(Q)D_k^{M_0}(x,x_Q)\lambda_Q.$
	    \end{defn}
	
	Finally, we will prove	
	\begin{thm}\label{completness}
		For $\left|\alpha\right|<1$, $~max\left\{{N\over {N+1}},~{N\over{N+1+\alpha}}\right\}<p<\infty$ and $0<q<\infty$, we have
		\begin{equation}\label{thm4.1}
		\dot{\B}^{\alpha,q}_{p,d}=\bar{ L^2\cap\dot{B}^{\alpha,q}_{p,d}},
		\end{equation}
	where ~$\bar{ L^2\cap\dot{B}^{\alpha,q}_{p,d}}$  ~  is the  collection of all distributions  ~$f\in\left(L^2\cap\dot{B}^{\alpha',q'}_{p',d}\right)'$  ~such that there exists a sequence ~$\{f_n\}_{n=1}^{\infty}\in L^2({d\omega})$     ~with~ $\|f_n-f_m\|_{{\dot{B}_{p,d}^{\alpha,q}} }\to 0 $~ as $n, m\to \infty$,   and $f_n\to f$ as $n\to\infty$ in $\left(L^2\cap\dot{B}^{\alpha',q'}_{p',d}\right)'$.
	\end{thm}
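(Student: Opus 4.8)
The plan is to prove the set equality \eqref{thm4.1} by establishing the two inclusions separately. For the inclusion $\bar{L^2\cap\dot{B}^{\alpha,q}_{p,d}}\subseteq\dot{\B}^{\alpha,q}_{p,d}$ the work has essentially already been done in Theorem \ref{distribution converge}; the reverse inclusion $\dot{\B}^{\alpha,q}_{p,d}\subseteq\bar{L^2\cap\dot{B}^{\alpha,q}_{p,d}}$ is where the real effort lies, and it rests on a synthesis (almost-orthogonality) estimate for the building blocks $D_k^{M_0}(\cdot,x_Q)$.

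First I would treat $\bar{L^2\cap\dot{B}^{\alpha,q}_{p,d}}\subseteq\dot{\B}^{\alpha,q}_{p,d}$. Given $f$ in the left-hand space, there is by definition a sequence $\{f_n\}\subset L^2(d\omega)$, Cauchy in the $\dot{B}^{\alpha,q}_{p,d}$ norm, with $f_n\to f$ in $(L^2\cap\dot{B}^{\alpha',q'}_{p',d})'$. Theorem \ref{distribution converge} applies to this very sequence and produces a distribution $h$ with $\|f\|_{\dot{B}^{\alpha,q}_{p,d}}\sim\|h\|_{\dot{B}^{\alpha,q}_{p,d}}$, together with the weak-type reproducing formula \eqref{weak type reproducing}; by uniqueness of distributional limits the limit it produces coincides with the given $f$. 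Reading \eqref{weak type reproducing} as a decomposition of $f$, I set $\lambda_Q:={\bf D}_k(h)(x_Q)$ for $Q\in Q_d^k$, so that $f=\sum_{k}\sum_{Q}\omega(Q)D_k^{M_0}(\cdot,x_Q)\lambda_Q$ in the sense of distributions. By \eqref{Besov norm} the coefficient functional of this particular choice equals $\|h\|_{\dot{B}^{\alpha,q}_{p,d}}$, so the defining infimum \eqref{finf} yields $\|f\|_{\dot{\B}^{\alpha,q}_{p,d}}\le\|h\|_{\dot{B}^{\alpha,q}_{p,d}}<\infty$, whence $f\in\dot{\B}^{\alpha,q}_{p,d}$.

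For the reverse inclusion, take $f\in\dot{\B}^{\alpha,q}_{p,d}$ with a decomposition $f=\sum_{k}\sum_{Q}\omega(Q)D_k^{M_0}(\cdot,x_Q)\lambda_Q$ whose coefficient norm in \eqref{finf} is finite. I would build the approximating sequence by truncation: let $f_n$ be the partial sum over $|k|\le n$ and over the finitely many $Q\in Q_d^k$ meeting a ball $B(0,R_n)$ with $R_n\uparrow\infty$. Each building block $D_k^{M_0}(\cdot,x_Q)$ is a difference of Dunkl–Poisson kernels and hence lies in $L^2(d\omega)$, so every $f_n\in L^2(d\omega)$; and the distributional convergence $f_n\to f$ is immediate from the assumed convergence of the defining series tested against any $g\in L^2\cap\dot{B}^{\alpha',q'}_{p',d}$. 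It then remains only to check that $\{f_n\}$ is Cauchy in the $\dot{B}^{\alpha,q}_{p,d}$ norm.

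This last point is the crux. Writing $f_n-f_m=\sum_{(k,Q)\in\Lambda_{n,m}}\omega(Q)D_k^{M_0}(\cdot,x_Q)\lambda_Q$ over the tail index set $\Lambda_{n,m}$, I would apply ${\bf D}_j$ and evaluate at $x_{Q'}$ to obtain ${\bf D}_j(f_n-f_m)(x_{Q'})=\sum_{(k,Q)}\omega(Q)\,{\bf D}_j\big(D_k^{M_0}(\cdot,x_Q)\big)(x_{Q'})\,\lambda_Q$. The key input is the almost-orthogonality estimate for ${\bf D}_j D_k^{M_0}(\cdot,x_Q)(x_{Q'})$ — the same kernel bounds that underlie Theorems \ref{Dunkl reproducing} and \ref{dual prop} — which decays geometrically in $|j-k|$ through the cancellation of ${\bf D}_j$ against the smoothness of $D_k^{M_0}$, and decays in the Dunkl/Euclidean distance between $x_{Q'}$ and $x_Q$ through the Poisson-kernel bounds. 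Inserting this into \eqref{Besov norm} and summing by Hölder's inequality (for $p,q>1$) or by $r$-subadditivity of $t\mapsto t^r$, $0<r\le1$ (for $p,q\le1$), the hypotheses $|\alpha|<1$ and $p>\max\{\tfrac{N}{N+1},\tfrac{N}{N+1+\alpha}\}$ make the resulting geometric and spatial sums converge, giving $\|f_n-f_m\|_{\dot{B}^{\alpha,q}_{p,d}}\lesssim\Big\{\sum_{k}2^{k\alpha q}\Big(\sum_{Q:\,(k,Q)\in\Lambda_{n,m}}\omega(Q)|\lambda_Q|^p\Big)^{q/p}\Big\}^{1/q}$. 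Since the right-hand side is a tail of the convergent coefficient series it tends to $0$, so $\{f_n\}$ is Cauchy and $f\in\bar{L^2\cap\dot{B}^{\alpha,q}_{p,d}}$. I expect this synthesis estimate, which is precisely the converse of the analysis bound encoded in the norm equivalences, to be the main obstacle, as it demands the full strength of the almost-diagonal kernel estimates and the parameter restrictions; the truncation and distributional-convergence steps are routine once it is established.
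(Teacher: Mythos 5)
Your proposal is correct and follows essentially the same route as the paper: the inclusion $\bar{L^2\cap\dot{B}^{\alpha,q}_{p,d}}\subseteq\dot{\B}^{\alpha,q}_{p,d}$ via Theorem \ref{distribution converge} with the choice $\lambda_Q={\bf D}_k(h)(x_Q)$, and the reverse inclusion via truncated partial sums $f_n$ over $|k|\le n$ and cubes inside a ball, whose Cauchy property in the $\dot{B}^{\alpha,q}_{p,d}$ norm follows from the almost-orthogonality of $D_{k'}D_k^{M_0}$. The synthesis estimate you identify as the crux is exactly what the paper has already packaged as Lemma \ref{iterate} (applied with $S_{k,k'}=D_{k'}D_k^{M_0}$ and the bound \eqref{DkDj}), so no new estimate needs to be derived at this stage.
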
		
	As a  consequence, the Dunkl-Besov space $\dot{\B}^{\alpha,q}_{p,d}(d\omega)$ is a complete space, and we get the following equivalence of Besov space:
	\begin{equation}
		\dot{\B}_{p,d}^{\alpha,q}(d\omega) \approx  \dot{B}_{p,cw}^{\alpha,q}(d\omega) \approx \dot{B}_{p,q}^{\alpha,\Delta}(d\omega).
	\end{equation}

 As an application, we consider  the boundedness of the Dunkl-Calder\'{o}n-Zygmund operators on the Dunkl-Besov spaces.  The Dunkl-Calder\'{o}n-Zygmund singular integral operators was introduced in \cite{Hp}: let $C_0^{\eta},~\eta>0,$ denote the set of all  continuous functions $f$ with compact support and $\left\|f\right\|_{\eta}=\sup\limits_{x\neq y}\frac{\left|f(x)-f(y)\right|}{\left|x-y\right|^{\eta}}< \infty$;
 \begin{defn}
  	An operator ${\bf T}:C_0^\eta(\R^n)\rightarrow (C_0^\eta(\R^n))'$ with $\eta>0,$ is said to be a  Dunkl-Calder\'{o}n-Zygmund  singular integral operator  if $K(x,y)$, the kernel of ${\bf T}$, satisfies the following estimates: for some  $0<\varepsilon\leq1$
  	\begin{equation}\label{k1}
  		\left|K(x,y) \right|\leq C {1\over\omega(B(x,d(x,y))) } \left({d(x,y)}\over {||x-y||} \right)^\varepsilon
  	\end{equation}
  	for ~all ~$x\neq y;$
  	\begin{equation}\label{k2}
  		\left| K(x,y)-K(x',y) \right|\leq C  {1\over\omega(B(x,d(x,y))) } \left(||x-x'||\over {||x-y||} \right)^\varepsilon
  	\end{equation}
  	for~$||x-x'||\leq {d(x,y)/2};$
  	\begin{equation}\label{k3}
  		\left| K(x,y)-K(x,y') \right|\leq C {1\over\omega(B(x,d(x,y))) } \left(||y-y'||\over {||x-y||} \right)^\varepsilon
  	\end{equation}
  	for~$||y-y'||\leq {d(x,y)/2}.$
  	Moreover,
  	$$
  	\langle {\bf T}(f), g\rangle = \int_{\R^n}\int_{\R^n} K(x,y) f(x)g(y)d\omega(x)d\omega(y),
  	$$
  	for $\supp (f)\cap \supp (g)=\emptyset$. $\var$ is called the regularity exponent of ${\bf T}$. Define 
  	$$\|K\|_{dcz} :=\inf \left\{  C:\textit{\eqref{k1}- \eqref{k3} hold} \right\}.$$ The Dunkl-Calder\'{o}n-Zygmund operator norm is defined by
  	\begin{equation}
  	\|{\bf T}\|_{dcz}=\|{\bf T}\|_{2,2}+\|K\|_{dcz}.
  	\end{equation}
   \end{defn}
   A  Dunkl-Calder\'{o}n-Zygmund  singular integral operator  is said to be the  Dunkl-Calder\'{o}n-Zygmund  operator  if it extends  a bounded operator  on $L^2(d\omega)$. In \cite{Hp}, for this operator, the authors established the ${\bf T}1$ theorem and the boundedness on Dunkl-Hardy spaces.  Motivated by these results,  We obtain  the boundedness of the  Dunkl-Calder\'{o}n-Zygmund operators on the Dunkl-Besov spaces as follows: 
 
 \begin{thm}\label{T bound}
 	Let ${\bf T }f(x)=\int_{\R^n} K(x,y) f(y) d\omega(y)$ be a Dunkl-Calder\'{o}n-Zygmund operator. Suppose $\varepsilon_0$ is the regularity exponent of the kernel $K(x,y)$.  Suppose $\left|\alpha\right|<1$, $\max\left\{{N\over {N+1}},~{N\over{N+1+\alpha}}\right\}<p<\infty$ and $0<q<\infty$,
 	\begin{enumerate}[(i)]
 		\item
 		If ${\bf T}1=0$, then for $0<\alpha<\var_0$ and $ p>{N\over N+\alpha}$, ${\bf T}$ is bounded on $\dot{\B}_{p,d}^{\alpha,q}(d\omega)$;
 		\item
 		If ${\bf T^*}1=0$, then for $-\var_0<\alpha<0$ and $ p>\max\left\{{N\over N-\alpha},{N\over N+\var_0+\alpha}\right\}$, ${\bf T}$ is bounded on $\dot{\B}_{p,d}^{\alpha,q}(d\omega)$;
 		\item
 		If  ${\bf T}1={\bf T^*}1=0$, then for $|\alpha|<\var_0$ and $p>\max\left\{{N\over N+\var_0},{N\over N+\var_0+\alpha}\right\}$, ${\bf T}$  is bounded on $\dot{\B}_{p,d}^{\alpha,q}(d\omega)$.
 	\end{enumerate}
 \end{thm}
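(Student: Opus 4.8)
The plan is to establish the a priori estimate $\|{\bf T}f\|_{\dot B^{\alpha,q}_{p,d}}\lesssim \|f\|_{\dot B^{\alpha,q}_{p,d}}$ for $f\in L^2\cap \dot B^{\alpha,q}_{p,d}$ and then to extend ${\bf T}$ to all of $\dot{\B}^{\alpha,q}_{p,d}$ by density. Since ${\bf T}$ is bounded on $L^2(d\omega)$, for such $f$ the image ${\bf T}f$ again lies in $L^2(d\omega)$, so once we show $\|{\bf T}f\|_{\dot B^{\alpha,q}_{p,d}}<\infty$ we have ${\bf T}f\in L^2\cap\dot B^{\alpha,q}_{p,d}\subset\dot{\B}^{\alpha,q}_{p,d}$, and the norm is computed through the coefficients ${\bf D}_{k'}({\bf T}f)(x_{Q'})$ as in \eqref{Besov norm}. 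Once the a priori bound is available, Theorem \ref{completness} identifies $\dot{\B}^{\alpha,q}_{p,d}$ with the completion of $L^2\cap \dot B^{\alpha,q}_{p,d}$: approximating an arbitrary $f\in\dot{\B}^{\alpha,q}_{p,d}$ by a Cauchy sequence $\{f_n\}\subset L^2\cap\dot B^{\alpha,q}_{p,d}$, linearity and the estimate show $\{{\bf T}f_n\}$ is Cauchy in the $\dot B^{\alpha,q}_{p,d}$ norm, and by Theorem \ref{distribution converge} it converges to a well-defined ${\bf T}f\in\dot{\B}^{\alpha,q}_{p,d}$, independent of the approximating sequence.

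To obtain the a priori estimate I would start from the reproducing formula of Theorem \ref{Dunkl reproducing}, writing $f=\sum_k\sum_{Q\in Q_d^k}\omega(Q)D_k^{M_0}(\cdot,x_Q){\bf D}_k(h)(x_Q)$ with $\|h\|_{\dot B^{\alpha,q}_{p,d}}\sim\|f\|_{\dot B^{\alpha,q}_{p,d}}$. Since this series converges in $L^2(d\omega)$ and ${\bf T},{\bf D}_{k'}$ are $L^2$-bounded, one may interchange ${\bf T}$ and ${\bf D}_{k'}$ with the sum to get
\[ {\bf D}_{k'}({\bf T}f)(x_{Q'})=\sum_{k}\sum_{Q\in Q_d^k}\omega(Q)\,\big({\bf D}_{k'}{\bf T}D_k^{M_0}(\cdot,x_Q)\big)(x_{Q'})\,{\bf D}_k(h)(x_Q). \]
The heart of the matter is an almost-orthogonality estimate for the composed kernel, namely a bound of the form
\[ \big|({\bf D}_{k'}{\bf T}D_k^{M_0})(x_{Q'},x_Q)\big|\lesssim 2^{-|k-k'|\sigma}\,\frac{1}{\omega\!\big(B(x_{Q'},2^{-(k\wedge k')}+d(x_{Q'},x_Q))\big)}\Big(\frac{2^{-(k\wedge k')}}{2^{-(k\wedge k')}+d(x_{Q'},x_Q)}\Big)^{\gamma}, \]
with a scale-decay exponent $\sigma$ and a spatial-decay exponent $\gamma$. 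This is where the cancellation hypotheses enter and where the three cases of the theorem separate.

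To prove this estimate I would split into $k'\ge k$ and $k'<k$. In each regime one writes the composition as an integral against the kernel $K$, using the cancellation $\int D_k^{M_0}(x,x_Q)\,d\omega(x)=0$ and $\int D_{k'}(x_{Q'},y)\,d\omega(y)=0$ (both inherited from the Poisson-kernel normalization) together with the size and smoothness conditions \eqref{k1}--\eqref{k3} of $K$ and the regularity of the Poisson-difference kernels. In the off-diagonal range ($d(x_{Q'},x_Q)$ large relative to the smaller scale) the kernel conditions yield the spatial decay directly; in the near-diagonal range one falls back on the $L^2$-boundedness of ${\bf T}$ combined with the cancellation of the two Poisson differences. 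The condition ${\bf T}1=0$ is what lets ${\bf T}D_k^{M_0}(\cdot,x_Q)$ retain the cancellation needed to extract the full scale-gain $2^{-(k'-k)\sigma}$ when $k'>k$, the direction controlling positive smoothness; dually, ${\bf T}^*1=0$ supplies the gain when $k'<k$. The admissible exponents $\sigma\le\varepsilon_0$ and $\gamma$ are dictated by the regularity $\varepsilon_0$ and by the Dunkl geometry (the interplay of the Euclidean $\|x-y\|$ in \eqref{k1}--\eqref{k3} with the Dunkl metric $d$ in the kernel normalization), and it is precisely the requirements $\sigma>|\alpha|$ and $\gamma>N(1/p-1)$ that produce the ranges $|\alpha|<\varepsilon_0$ and $p>\max\{N/(N+\alpha),\,N/(N+\varepsilon_0),\dots\}$ in the three cases, beyond the standing hypotheses already needed for the reproducing formula.

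Finally, inserting the almost-orthogonality estimate into the expression for ${\bf D}_{k'}({\bf T}f)(x_{Q'})$ and summing, I would run a discrete Plancherel--Polya / Schur-type argument: for fixed $k$ the spatial sum $\sum_{Q}\omega(Q)(\cdots)|{\bf D}_k(h)(x_Q)|$ is controlled, via the doubling property \eqref{doubel condition} and the spatial decay $\gamma$, by $\big(\sum_{Q\in Q_d^k}\omega(Q)|{\bf D}_k(h)(x_Q)|^p\big)^{1/p}$ up to the scale-mismatch factor, and then the $2^{-|k-k'|\sigma}$ decay lets one sum over $k$ against the weights $2^{k'\alpha}$ by Young's inequality in $\ell^q(\Z)$, giving $\|{\bf T}f\|_{\dot B^{\alpha,q}_{p,d}}\lesssim\|h\|_{\dot B^{\alpha,q}_{p,d}}\sim\|f\|_{\dot B^{\alpha,q}_{p,d}}$. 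Cases (i) and (ii) are moreover dual through the duality estimate of Theorem \ref{dual prop}, so it suffices to carry out the argument for ${\bf T}1=0$, $\alpha>0$ and transfer to ${\bf T}^*1=0$, $\alpha<0$; case (iii) follows by exploiting both cancellations to obtain two-sided scale gain over the full range $|\alpha|<\varepsilon_0$. The main obstacle is unquestionably the almost-orthogonality estimate, and within it the near-diagonal regime, where the two competing metrics must be reconciled while simultaneously exploiting the cancellation conditions and the $L^2$ bound.
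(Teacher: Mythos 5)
Your overall strategy coincides with the paper's: reduce to the a priori bound on the dense subspace $L^2\cap\dot B^{\alpha,q}_{p,d}$ (Theorem \ref{completness}), expand $f$ by the discrete reproducing formula of Theorem \ref{Dunkl reproducing}, estimate the composed kernel $D_{k'}TD_k^{M_0}$, and sum via the Schur/maximal-function machinery (the paper's Lemma \ref{iterate}). The density extension and the identification of the almost-orthogonality estimate as the crux are both exactly as in the paper.

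There is, however, one step in your plan that would fail as literally stated, and it is precisely the step you call the heart of the matter. You posit a single almost-orthogonality bound $|D_{k'}TD_k^{M_0}(x_{Q'},x_Q)|\lesssim 2^{-|k-k'|\sigma}(\cdots)$ valid for all $k,k'$. Under the hypothesis of case (i) (only ${\bf T}1=0$) this is false in the regime $k>k'$: there the finer-scale factor $TD_k^{M_0}(\cdot,x_Q)$ has no cancellation in its first variable (that would require ${\bf T}^*1=0$), so no decay in $|k-k'|$ can be extracted; one only gets the \emph{size} bound at the coarser scale $2^{-k'}$ with no gain at all. The paper's proof makes this explicit: it splits into $I_1$ ($k>k'$) and $I_2$ ($k\le k'$), proves the gain-free estimate \eqref{ta} for $I_1$, and recovers geometric decay there solely from the weight $2^{(k'-k)\alpha}=2^{-|k-k'|\alpha}$ --- which decays only because $\alpha>0$. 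This, not a condition of the form ``$\sigma>|\alpha|$'', is why case (i) forces $0<\alpha<\varepsilon_0$ and case (ii) forces $-\varepsilon_0<\alpha<0$; your plan needs this compensation mechanism spelled out or the sum over the no-gain direction does not close. Separately, your proposed transference from (i) to (ii) by duality is not licensed by Theorem \ref{dual prop}, which is only a one-sided pairing estimate and not an identification of $(\dot{\B}^{\alpha,q}_{p,d})'$ with $\dot{\B}^{\alpha',q'}_{p',d}$; the paper instead proves (ii) by running the symmetric direct argument with $\overline{D}_{k'}=D_{k'}T$, and you should do likewise.
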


	The organization  of the paper is as follows.  In the next section, we will introduce the preliminary work required for this article. Section $3$ will provide the proof of  {\bf Theorem \ref{Dunkl reproducing}}. In Section $4$, we will prove the duality estimate {\bf Theorem \ref{dual prop}}. The proof of {\bf Theorem \ref{distribution converge}} and {\bf Theorem \ref{completness}} will be given in Section $5$. In Section 6, we will prove  {\bf Theorem \ref{T bound}}.

	\section{Preliminaries}
	\setcounter{equation}{0}
	
 The kernel $D_k(x,y)$ (and $D_k^{M_0}(x,y)$) in \eqref{Dunkl reproducing} satisfy the following estimates for each $0<\varepsilon\leq 1$ (see \cite{Hp}):
	\begin{equation}\label{size}
		\left|D_k(x,y)\right|\leq C\frac{1}{V(x,y,2^{-k}+d(x,y))}\left(\frac{2^{-k}}{2^{-k}+\|x-y\|}\right)^{\varepsilon};
	\end{equation} 
	\begin{equation} \label{regularity1}
		\begin{aligned}
			\left|D_k(x,y)-D_k(x',y)\right|\leq C\left(\frac{\left\|x-x'\right\|}{2^{-k}}\right)^{\varepsilon}&\left\{\frac{1}{V(x,y,2^{-k}+d(x,y))}\left(\frac{2^{-k}}{2^{-k}+\|x-y\|}\right)^{\varepsilon}\right.
			\\ 
			&\left.+\frac{1}{V(x',y,2^{-k}+d(x',y))}\left(\frac{2^{-k}}{2^{-k}+\|x'-y\|}\right)^{\varepsilon}\right\};
		\end{aligned}
	\end{equation}
	\begin{equation}\label{regularity2}
		\begin{aligned}
			\left|D_k(x,y)-D_k(x,y')\right|\leq C\left(\frac{\left\|y-y'\right\|}{2^{-k}}\right)^{\varepsilon}&\left\{\frac{1}{V(x,y,2^{-k}+d(x,y))}\left(\frac{2^{-k}}{2^{-k}+\|x-y\|}\right)^{\varepsilon}\right.
			\\ &\left.+\frac{1}{V(x,y',2^{-k}+d(x,y'))}\left(\frac{2^{-k}}{2^{-k}+\|x-y'\|}\right)^{\varepsilon}\right\};
		\end{aligned}
	\end{equation}
	\begin{equation}\label{cancelation} \int_{\R^n}D_k(x,y)d\omega(x)=\int_{\R^n}D_k(x,y)d\omega(y)=0.
	\end{equation}	
	
	If we fix the variable $x$ (or $y$), then $D_k(x,\cdot)$ (or $D_k(\cdot,y)$) is a smooth molecule (see \cite [{\bf Definition 1.6}]{Hp}). In this paper, a function $f(x)$ is called a smooth molecule with regularity $\eta$ if there exists $x_0\in \R^n$, such that the function $f(x)$ satisfies \eqref{size}, \eqref{regularity1} and \eqref{cancelation} only for the variable $x$, with $y=x_0$, $\var=\eta$ and some integer $k$.
	The following result implies that $D_k(\cdot,y),D_k(x,\cdot),D_k^{M_0}(\cdot,y),D_k^{M_0}(x,\cdot)$ all belong to $L^2\cap\dot{B}_{p,d}^{\alpha,q}$ for $|\alpha|<1$, $\max\left\{{N\over N+1},{N\over N+1+\alpha}\right\}<p<\infty$ and $0<q<\infty$.
	
	\begin{lemma}\label{basis}
		Suppose $f(x)$ is a smooth molecule with regularity $0<\eta\leq 1$. Then $f\in L^2\cap\dot{B}_{p,d}^{\alpha,q}$ for $|\alpha|<\eta$, $\max\left\{{N\over N+\eta},{N\over N+\eta+\alpha}\right\}<p<\infty$ and $0<q<\infty$.
		\begin{proof}
			Suppose $f(x)$ satisfies \eqref{size}, \eqref{regularity1} and \eqref{cancelation} only for the variable $x$, with $y=x_0$, $\var=\eta$ and some integer $k$. Then by   \cite[{\bf Lemma 2.11}]{Hp}, for any $\var<\eta$ we have
			\begin{equation}
				\begin{aligned}
					\left|D_{k'}(f)(x_{Q'})\right|&=\left|\int_{\R^n}D_{k'}(x,z)f(z)d\omega(z)\right|\\
				    &\leq C_{\var}2^{-|k-k'|\varepsilon}\frac{1}{V(x_{Q'},x_0,2^{-k\lor{-k'}}+d(x_{Q'},x_0))}\left(\frac{2^{{-k\lor{-k'}}}}{2^{-k\lor{-k'}}+d(x_{Q'},x_0)}\right)^{\varepsilon}\\
				    &\leq C_{\var}2^{-|k-k'|\varepsilon}\sum_{\sigma\in G}\omega(Q') \frac{1}{V(x_{Q'},\sigma(x_0),2^{-k\lor{-k'}}+\left\|x_{Q'}-x_0\right\|)}\left(\frac{2^{{-k\lor{-k'}}}}{2^{-k\lor{-k'}}+\left\|x_{Q'}-x_0\right\|}\right)^{\var},
			    \end{aligned}
			\end{equation}
		    where the notation $V(x,y,r)$ to denote the maximal value of $\omega(B(x,r))$ and $\omega(B(y,r))$, and the notation $a\lor b=\max\{a,b\}$. Choose $\var<\eta$ such that $|\alpha|<\var$ and  $p>\max\left\{{N\over N+\var},{N\over N+\var+\alpha}\right\}$. We have
		    \begin{equation}
		    	\begin{aligned}
		    	\|f\|_{\dot{B}_{p,d}^{\alpha,q}}^q=\sum_{k'\in\Z}2^{k'\alpha q}\left\{\sum_{Q'\in Q^{k'}_d}\omega(Q')\left|D_{k'}(f)(x_{Q'})\right|^p\right\}^{q\over p}.
		    	\end{aligned}
		    \end{equation}
		    We will use the fact that for each $\var'>0$, (see \cite[{\bf Lemma 2.2}]{Hp}) 
		    \begin{equation}\label{hehe}
		    	\sum_{Q'\in Q^{k'}_d}\omega(Q')\frac{1}{V(x_{Q'},x_0,2^{-k\lor{-k'}}+d(x_{Q'},x_0))}\left(\frac{2^{{-k\lor{-k'}}}}{2^{-k\lor{-k'}}+d(x_{Q'},x_0)}\right)^{\var'}<C_{\var'}.
		    \end{equation}
            Let $A_0=\left\{Q':\textit{$Q'\in Q^{k'}_d,\left\|x_{Q'}-\sigma(x_0)\right\|\leq 2^{-k\lor{-k'}}$}\right\}$, and for $l\geq 1$, 
            $$
            A_l= \left\{Q':\textit{$Q'\in Q_d^{k'}, 2^{-k\lor{-k'}+l-1}\leq \left\|x_{Q'}-\sigma(x_0)\right\|\leq 2^{-k\lor{-k'}+l}$}\right\}.
            $$ 
            Then for any $0<\var'<\var p$,
           \begin{equation}\label{haha}
           	\begin{aligned}
           	&\sum_{Q'\in Q^{k'}_d}\omega(Q')\left|D_{k'}(f)(x_{Q'})\right|^p
           	\\
           	&\leq C \sum_{\sigma\in G}\sum_{l=0}^{\infty}\sum_{Q'\in A_l}2^{-|k-k'|\var p}\left\{\omega(Q')\frac{1}{V(x_{Q'},\sigma(x_0),2^{-k\lor{-k'}}+\left\|x_{Q'}-\sigma(x_0)\right\|)}\left(\frac{2^{{-k\lor{-k'}}}}{2^{-k\lor{-k'}}+\left\|x_{Q'}-\sigma(x_0)\right\|}\right)^{\var'}\right\}\\
           	&~~~~~~~~~\qquad\qquad\times\left(\frac{2^{{-k\lor{-k'}}}}{2^{-k\lor{-k'}+l}}\right)^{\var p-\var'}V(x_{Q'},\sigma(x_0),2^{-k\lor{-k'}+l})^{1-p}.
           	\end{aligned}
           \end{equation}
           Let $Q_{k',\sigma}$ be a cube centered at $\sigma(x_0)$ with side length $2^{-k\lor{-k'}}$. Then  
           $$
           C^{-1}< V(x_{Q'},\sigma(x_0),2^{-k\lor{-k'}+l})\leq C2^{lN}\omega(Q_{k',\sigma}).
           $$
           By \eqref{hehe} and $\eqref{haha}$, if $p\geq 1$, then
           $$
           \sum_{Q'\in Q^{k'}_d}\omega(Q')\left|D_{k'}(f)(x_{Q'})\right|^p\leq C2^{-|k-k'|\var}.
           $$
           Because $|\alpha|<\var$, we have
           $$
           \|f\|^q_{\dot{B}_{p,d}^{\alpha,q}}\leq C \sum_{k'\in\Z}2^{(k'\alpha -|k-k'|\var)q}<\infty.
           $$
           If $p<1$, then
           $$
          \left(\frac{2^{{-k\lor{-k'}}}}{2^{-k\lor{-k'}+l}}\right)^{\var p-\var'}V(x_{Q'},\sigma(x_0),2^{-k\lor{-k'}+l})^{1-p}\leq 2^{(N(1-p)-\var p-\var')l}\omega(Q_{k',\sigma})^{1-p}.
           $$
           Because $p>{N\over N+\var}$, we can choose $\var'$ such that $N(1-p)-\var p-\var'<0$. By \eqref{hehe} and \eqref{haha},
           \begin{equation}
           	\sum_{Q'\in Q^{k'}_d}\omega(Q')\left|D_{k'}(f)(x_{Q'})\right|^p\leq C\sum_{\sigma\in G}\omega(Q_{k',\sigma})^{1-p}2^{-|k-k'|\var p}.
           \end{equation}
           Hence
           \begin{equation}
           	\|f\|^q_{\dot{B}_{p,d}^{\alpha,q}}\leq \sum_{k'\in\Z}\left(\sum_{\sigma\in G}\omega(Q_{k',\sigma})^{{1\over p}-1}2^{(k'\alpha -|k-k'|\var)p}\right)^{q\over p}.
           \end{equation}
           Because $\omega(Q_{k',\sigma})\leq C2^{-k'N}$ for $k'<k$ and $\omega(Q_{k',\sigma})\leq C$ for $k'\geq k$, we have
           \begin{equation}
           	\begin{aligned}
           		\|f\|^q_{\dot{B}_{p,d}^{\alpha,q}}&\leq C \sum_{k'\geq 0}\omega(Q_{k',\sigma})^{{q\over p}-q}2^{(k'\alpha -|k-k'|\var)q}+C \sum_{k'<0}\omega(Q_{k',\sigma})^{{q\over p}-q}2^{(k'\alpha -|k-k'|\var)q}\\
           		&\leq C\sum_{k'\geq 0}2^{(k'\alpha -|k-k'|\var)q}+C\sum_{k'<0}2^{(k'\alpha -|k-k'|\var-k'N({1\over p}-1))q}.
           	\end{aligned}
           \end{equation}
           Because $|\alpha|<\var$ and $p>{N\over N+\alpha+\var}$, both above series converge. So $\|f\|^q_{\dot{B}_{p,d}^{\alpha,q}}<\infty$.
		\end{proof}
	\end{lemma}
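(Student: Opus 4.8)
The plan is to estimate the building blocks $\mathbf{D}_{k'}(f)(x_{Q'})$ that enter the Dunkl-Besov norm \eqref{Besov norm} by an almost-orthogonality argument, exploiting that $f$ and each kernel $D_{k'}(\cdot,z)$ are both smooth molecules. Since $f$ satisfies the size estimate \eqref{size} in the variable $x$, with $y=x_0$ and regularity $\eta$, the inclusion $f\in L^2(d\omega)$ follows from a routine integration of the square of this decay against the doubling measure $d\omega$, so the real work is to bound $\|f\|_{\dot{B}_{p,d}^{\alpha,q}}$. First I would invoke the molecule-versus-molecule estimate of \cite[Lemma 2.11]{Hp}, which gives, for any $\var<\eta$,
\begin{equation*}
\left|D_{k'}(f)(x_{Q'})\right|\leq C_{\var}\,2^{-|k-k'|\var}\,\frac{1}{V(x_{Q'},x_0,2^{-k\lor{-k'}}+d(x_{Q'},x_0))}\left(\frac{2^{-k\lor{-k'}}}{2^{-k\lor{-k'}}+d(x_{Q'},x_0)}\right)^{\var},
\end{equation*}
the factor $2^{-|k-k'|\var}$ encoding the gain from the scale mismatch together with the two cancellation conditions \eqref{cancelation}. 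Replacing the Dunkl distance $d(x_{Q'},x_0)$ by the Euclidean distances $\|x_{Q'}-\sigma(x_0)\|$ summed over $\sigma\in G$ recasts the decay in a form to which the discrete summation estimates apply.

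Second, I would fix the parameters. Because the hypotheses $|\alpha|<\eta$ and $p>\max\{N/(N+\eta),\,N/(N+\eta+\alpha)\}$ are open, I can choose $\var<\eta$ with $|\alpha|<\var$ and $p>\max\{N/(N+\var),\,N/(N+\var+\alpha)\}$ simultaneously; this margin is exactly what later makes the geometric series in $k'$ converge. Third, for each fixed $k'$ I would estimate the inner sum $\sum_{Q'\in Q_d^{k'}}\omega(Q')\left|D_{k'}(f)(x_{Q'})\right|^p$ by decomposing the dyadic cubes into annuli $A_l$ according to the size of $\|x_{Q'}-\sigma(x_0)\|$ relative to $2^{-k\lor{-k'}}$. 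The engine here is the uniform summation bound \eqref{hehe} from \cite[Lemma 2.2]{Hp}, which controls $\sum_{Q'}\omega(Q')\,V^{-1}(\cdots)(\cdots)^{\var'}$ by a constant for any $\var'>0$; pairing it with the measure comparison $V(x_{Q'},\sigma(x_0),2^{-k\lor{-k'}+l})\le C2^{lN}\omega(Q_{k',\sigma})$ yields, in the range $p\ge1$,
\begin{equation*}
\sum_{Q'\in Q_d^{k'}}\omega(Q')\left|D_{k'}(f)(x_{Q'})\right|^p\leq C\,2^{-|k-k'|\var}.
\end{equation*}

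Summing over $k'$ with $|\alpha|<\var$ then gives $\|f\|_{\dot{B}_{p,d}^{\alpha,q}}^q\le C\sum_{k'}2^{(k'\alpha-|k-k'|\var)q}<\infty$ in the case $p\ge1$. The delicate part, and the step I expect to be the main obstacle, is the quasi-Banach range $0<p<1$, where the $\ell^p$ triangle inequality fails and the weight $\omega(Q_{k',\sigma})^{1-p}$ survives the annular summation. Here one must select $\var'$ with $N(1-p)-\var p-\var'<0$, which is possible precisely because $p>N/(N+\var)$, in order to absorb the annulus growth, and then split the $k'$-sum at $k'=k$, using $\omega(Q_{k',\sigma})\le C2^{-k'N}$ for $k'<k$ and $\omega(Q_{k',\sigma})\le C$ for $k'\ge k$. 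The condition $p>N/(N+\var+\alpha)$ is what forces the low-frequency tail $\sum_{k'<k}2^{(k'\alpha-|k-k'|\var-k'N(1/p-1))q}$ to converge, while $|\alpha|<\var$ handles the high-frequency tail; balancing these two competing constraints is the crux of the argument.
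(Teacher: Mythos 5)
Your proposal is correct and follows essentially the same route as the paper's own proof: the molecule-versus-molecule estimate from \cite[{\bf Lemma 2.11}]{Hp}, the choice of $\var<\eta$ with $|\alpha|<\var$ and $p>\max\{N/(N+\var),N/(N+\var+\alpha)\}$, the annular decomposition $A_l$ combined with the summation bound of \cite[{\bf Lemma 2.2}]{Hp}, the case split $p\geq 1$ versus $p<1$ with the choice of $\var'$ satisfying $N(1-p)-\var p-\var'<0$, and the final splitting of the $k'$-sum using the two bounds on $\omega(Q_{k',\sigma})$. No substantive differences to report.
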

	
	As in \cite{Hp}, we need introduce the discrete  Calder\'{o}n reproducing formula in spaces of homogeneous type (see \cite[{\bf Theorem 2.14}]{Hp}):
	\begin{theorem*}
		Let $\{{\bf S}_k\}_{k\in\Z}$ be  a Coifman's approximations to the identity. Set  ${\bf E}_k:={\bf S}_k-{\bf S}_{k-1}$. Then there~exists~a~family~of~operators $\{\widetilde{\bf E}_k\}_{k\in\Z}$ such~that~for~any fixed $x_Q\in Q$ with $k \in\Z$ and $Q\in Q_{cw}^k$,
		\begin{equation}\label{CW reproducing}
			f(x)= \sum_{k\in\Z}\sum_{Q\in Q_{cw}^k}\omega(Q){\widetilde E}_k(x,x_Q){\bf E}_k(f)(x_Q),
		\end{equation}
		where $Q_{cw}^k$ are collections of dyadic~cubes~with~the~side~length $2^{-{M_1}-k}$ and ~the~series~converge~in $L^p(d\omega)$, $1<p<\infty$, and moreover, for $\var=1$, the  kernel $E_k(x,y)$ satisfies \eqref{size}-\eqref{cancelation}, and the kernel $\widetilde{E}_k(x,y)$ satisfy \eqref{size}, \eqref{regularity1} and $\eqref{cancelation}$.
		\end{theorem*}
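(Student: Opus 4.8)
The plan is to follow the classical two-step scheme of Han and Deng--Han for discrete Calder\'{o}n reproducing formulae on spaces of homogeneous type: first produce a \emph{continuous} reproducing formula $f=\sum_k\widetilde{\bf E}_k{\bf E}_kf$, and then discretize it, absorbing the sampling error into a redefinition of the kernels $\widetilde E_k$. I would begin by recording the almost-orthogonality estimates. Since ${\bf E}_k={\bf S}_k-{\bf S}_{k-1}$ inherits from Coifman's approximation to the identity the size bound \eqref{size}, the regularity \eqref{regularity1} and the double cancellation \eqref{cancelation} with $\var=1$, a now-standard computation on $(\R^n,\|\cdot\|,d\omega)$ yields $\|{\bf E}_j{\bf E}_k\|_{L^2\to L^2}\le C2^{-|j-k|\var'}$ for some $0<\var'<1$. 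Because $\{{\bf S}_k\}$ is an approximation to the identity, ${\bf S}_k\to I$ and ${\bf S}_{-k}\to 0$ strongly as $k\to\infty$, so $\sum_{j\in\Z}{\bf E}_j=I$ by telescoping. Doubling the identity, $I=\sum_{j,k}{\bf E}_j{\bf E}_k$, and splitting into the near-diagonal part $T_L:=\sum_k\sum_{|j-k|\le L}{\bf E}_j{\bf E}_k$ and a remainder with $\|R_L\|_{L^2\to L^2}\le C\sum_{|m|>L}2^{-|m|\var'}$, one makes $\|R_L\|<1$ by choosing $L$ large, so $T_L$ is invertible on $L^2(d\omega)$ via a Neumann series. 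Setting $\widetilde{\bf E}_k:=T_L^{-1}\big(\sum_{|j-k|\le L}{\bf E}_j\big)$ gives $f=\sum_k\widetilde{\bf E}_k{\bf E}_kf$; the cancellation of $\widetilde E_k$ in $y$ comes from the ${\bf E}_j$ factor, while the size \eqref{size}, the regularity \eqref{regularity1} and the cancellation in $x$ are preserved because the class of kernels obeying \eqref{size}--\eqref{cancelation} is closed under composition and the Neumann series for $T_L^{-1}$ converges within this class.

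Second comes the discretization. Partitioning $\R^n$ into the dyadic cubes $Q\in Q_{cw}^k$ of side $2^{-M_1-k}$ and writing $\widetilde{\bf E}_k{\bf E}_kf(x)=\sum_Q\int_Q\widetilde E_k(x,y)\,{\bf E}_kf(y)\,d\omega(y)$, I would replace each integrand by its value at a sample point $x_Q\in Q$, producing $\widetilde T_{M_1}f(x):=\sum_k\sum_Q\omega(Q)\widetilde E_k(x,x_Q)\,{\bf E}_kf(x_Q)$. The error $f-\widetilde T_{M_1}f$ is estimated using the regularity \eqref{regularity1} of $\widetilde E_k(x,\cdot)$ together with the H\"older continuity of ${\bf E}_kf$ inherited from \eqref{regularity1}: since $\diam Q\lesssim 2^{-M_1}2^{-k}$ lies far below the scale $2^{-k}$ of the kernels, each cube contributes a gain $2^{-M_1\var'}$, and summing against the almost-orthogonality bounds gives $\|f-\widetilde T_{M_1}f\|_{L^2\to L^2}\le C2^{-M_1\var'}<1$ for $M_1$ large. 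Hence $\widetilde T_{M_1}$ is invertible; replacing the building blocks $\widetilde E_k(\cdot,x_Q)$ by $\widetilde T_{M_1}^{-1}\widetilde E_k(\cdot,x_Q)$ and checking, once more by the kernel calculus, that the new kernels still satisfy \eqref{size}, \eqref{regularity1} and \eqref{cancelation} produces exactly \eqref{CW reproducing} after renaming. Convergence in $L^p(d\omega)$ for $1<p<\infty$ then follows from the $L^2$ theory by standard Calder\'{o}n--Zygmund and duality arguments.

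The hard part will be the kernel calculus underlying both steps: proving that the size, regularity and cancellation conditions \eqref{size}--\eqref{cancelation} are stable under composition of the operators and, more delicately, under inversion of the near-identity operators $T_L$ and $\widetilde T_{M_1}$, working only with the quasi-metric $d$, the Euclidean norm and the doubling/reverse-doubling of $d\omega$. This is precisely the Cotlar--Coifman--Meyer almost-orthogonality machinery adapted to the Dunkl space of homogeneous type, and obtaining the composition and inversion lemmas with the correct dependence on the scales $2^{-k}$ is the technical heart of the argument.
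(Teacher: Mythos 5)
You should know at the outset that the paper itself contains no proof of this statement: it is imported as a known result, cited directly from \cite[{\bf Theorem 2.14}]{Hp}, so there is no internal argument to compare yours against. Your outline is the standard Coifman--Han two-step construction --- a continuous reproducing formula obtained from almost-orthogonality and a Neumann series, followed by discretization over the cubes $Q_{cw}^k$ and a second near-identity inversion --- and this is indeed the established route to results of this type on spaces of homogeneous type. As a plan it is the right one.

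There is, however, one step whose justification as written is false even though your stated conclusion is correct. You assert that the class of kernels satisfying \eqref{size}--\eqref{cancelation} is ``closed under composition and the Neumann series for $T_L^{-1}$ converges within this class.'' It is a well-known feature of this machinery that inversion does \emph{not} preserve the full class: the kernels of $T_L^{-1}$ composed with the ${\bf E}_j$ retain the size estimate, the first-variable regularity \eqref{regularity1} only with a strictly smaller exponent $\var'<1$ (obtained by trading part of the almost-orthogonality gain $2^{-|j-k|\var'}$ against smoothness), and the cancellation \eqref{cancelation}, but the second-variable regularity \eqref{regularity2} is lost. This is precisely why the theorem asserts all of \eqref{size}--\eqref{cancelation} for $E_k$ but only \eqref{size}, \eqref{regularity1} and \eqref{cancelation} for $\widetilde E_k$; and the phrase ``for $\var=1$'' must be read against the fact that \eqref{size}--\eqref{regularity2} are stated for every $0<\var\le 1$, since the exponent genuinely degrades under inversion. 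A correct write-up has to track which estimates survive each composition and each Neumann series rather than invoking closure of the whole class. A smaller point specific to this setting: Coifman's approximation to the identity on $(\R^n,\|\cdot\|,d\omega)$ yields estimates purely in the Euclidean metric, and one passes to the mixed form of \eqref{size}--\eqref{regularity2} (Dunkl distance $d(x,y)$ in the volume, Euclidean norm in the decay factor) using $d(x,y)\le\|x-y\|$ and $\omega(B(x,r))\le\omega(\mathcal{O}(B(x,r)))\le|G|\,\omega(B(x,r))$; your sketch does not mention this translation, though it is routine.
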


	
	
	The following almost  orthogonal estimate  in \cite{Hp} is crucial.
	\begin{lemma}\label{almost orthogonal estimate with dcz}
		Let ${\bf T}$ be  a Dunkl-Calder\'{o}n-Zygmund operator with ${\bf T}1={\bf T}^*1=0$.  Then
		\begin{equation}
		\begin{aligned}
			&\left|\int_{\R^n} \int_{\R^n}  E_k(x,u)K(u,v)\widetilde{E}_j(v,y)  d\omega(u) d\omega(v)\right|
			\\ 
			&\leq~C~2^{-|k-j|\var'}~\|{\bf T}\|_{dcz}~ {1\over {V(x,y,{2^{-j\lor-k}}+d(x,y))}}\left(({2^{-j\lor-k}}\over {2^{-j\lor-k}+d(x,y)}\right)^{\gamma},
		\end{aligned}
		\end{equation}
		where $\gamma,\var' \in (0,\var)$ and $\var$ is the regularity exponent of the kernel of ${\bf T}$  given in \eqref{k2}  and \eqref{k3}.
	\end{lemma}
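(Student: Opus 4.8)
The expression to be bounded is exactly the kernel of the composition ${\bf E}_k{\bf T}\widetilde{\bf E}_j$, so I would treat it as an almost-orthogonality estimate and organize the argument around the principle that a Dunkl--Calder\'on--Zygmund operator with ${\bf T}1={\bf T}^*1=0$ carries smooth molecules to smooth molecules. Concretely, set
$$I(x,y):=\int_{\R^n}\int_{\R^n}E_k(x,u)K(u,v)\widetilde{E}_j(v,y)\,d\omega(u)\,d\omega(v)=\int_{\R^n}E_k(x,u)\,{\bf T}\!\left(\widetilde{E}_j(\cdot,y)\right)(u)\,d\omega(u).$$
The plan is to show first that $\phi_y:=\widetilde{E}_j(\cdot,y)$, which is a smooth molecule of regularity $\var$ at scale $2^{-j}$ centered at $y$, is carried by ${\bf T}$ to another smooth molecule ${\bf T}\phi_y$ of some regularity $\gamma\in(0,\var)$ at the same scale $2^{-j}$ and center $y$, with molecular constant controlled by $\|{\bf T}\|_{dcz}$; and then to apply the molecule-against-molecule almost-orthogonality estimate already used in the proof of {\bf Lemma \ref{basis}} (i.e. \cite[{\bf Lemma 2.11}]{Hp}) to the pair $E_k(x,\cdot)$ and ${\bf T}\phi_y$.

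The heart of the matter is therefore the claim that ${\bf T}$ preserves molecules. To control the size and regularity of ${\bf T}\phi_y$ I would use ${\bf T}1=0$, i.e. $\int K(u,v)\,d\omega(v)=0$, to write
$${\bf T}\phi_y(u)=\int_{\R^n}K(u,v)\,[\phi_y(v)-\phi_y(u)]\,d\omega(v),$$
which removes the non-integrable part of the singularity. For the regularity one estimates ${\bf T}\phi_y(u)-{\bf T}\phi_y(u')$ by splitting the $v$--integral into the near region $\{\|v-u\|\le 2\|u-u'\|\}$, where the size bound \eqref{k1} is paired with the H\"older continuity of $\phi_y$, and the far region $\{\|v-u\|>2\|u-u'\|\}$, where the two kernel terms combine and the regularity \eqref{k2} supplies the gain $(\|u-u'\|/\|u-v\|)^{\var}$. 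Integrating against the decay of $\phi_y$ and summing the geometric series over the dyadic annuli $2^{-j+l}\le\|v-y\|<2^{-j+l+1}$ via \eqref{hehe}, the doubling estimate \eqref{doubel condition}, and the orbit bound $\omega(\mathcal{O}(B))\le|G|\,\omega(B)$, produces the molecular decay in $d(u,y)$, at the cost of lowering $\var$ to some $\gamma<\var$. The required cancellation of the output is then free: $\int {\bf T}\phi_y\,d\omega=\langle \phi_y,{\bf T}^*1\rangle=0$ by ${\bf T}^*1=0$.

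With ${\bf T}\phi_y$ identified as a smooth molecule of regularity $\gamma$ at scale $2^{-j}$ centered at $y$ and having cancellation, the remaining step is routine: the molecule--molecule almost-orthogonality estimate (the computation carried out in {\bf Lemma \ref{basis}}) applied to $E_k(x,\cdot)$ --- which has its own size \eqref{size}, regularity \eqref{regularity1} and cancellation \eqref{cancelation} at scale $2^{-k}$ --- yields
$$|I(x,y)|\le C\,\|{\bf T}\|_{dcz}\,2^{-|k-j|\var'}\,\frac{1}{V(x,y,2^{-j\lor-k}+d(x,y))}\left(\frac{2^{-j\lor-k}}{2^{-j\lor-k}+d(x,y)}\right)^{\gamma}$$
for suitable $\var',\gamma\in(0,\var)$, which is the assertion. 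Here the factor $2^{-|k-j|\var'}$ comes from the scale mismatch between the two molecules: when $k\ge j$ one pairs the cancellation of the finer molecule $E_k(x,\cdot)$ with the $\gamma$--regularity of ${\bf T}\phi_y$, and when $j>k$ one instead uses the cancellation of ${\bf T}\phi_y$ (which is why ${\bf T}^*1=0$ was needed) against the regularity of $E_k(x,\cdot)$.

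The main obstacle is the molecule-preservation step, and specifically keeping the correct non-isotropic spatial decay. Unlike the classical case, the Dunkl kernel bounds \eqref{k1}--\eqref{k3} mix the Euclidean distance $\|u-v\|$ in the numerators with the Dunkl distance through the volume $\omega(B(u,d(u,v)))$, while the molecular decay of $\phi_y$ is measured with respect to $d(\cdot,y)$ and the weight $\omega$. Reconciling these two metrics --- passing between $\|u-y\|$ and $d(u,y)$ by summing over the reflection group $G$, and balancing the exponents so that the volume factors telescope correctly through \eqref{hehe} --- is the delicate part, and is exactly what forces the loss from $\var$ down to $\gamma$ and $\var'$; once this is in place, the two almost-orthogonality pairings and the geometric summations are standard.
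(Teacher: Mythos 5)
The paper offers no proof of this lemma --- it is imported verbatim from \cite{Hp} --- but your plan is exactly the mechanism that source (and this paper itself, in the proof of Theorem \ref{T bound}(iii)) relies on: the hypotheses ${\bf T}1={\bf T}^*1=0$ let a Dunkl--Calder\'on--Zygmund operator carry the smooth molecule $\widetilde E_j(\cdot,y)$ to a smooth molecule of slightly lower regularity $\gamma<\var$ (this is \cite[Theorem~1.5]{Hp}), after which the molecule--molecule almost-orthogonality estimate \cite[Lemma~2.11]{Hp} yields the gain $2^{-|k-j|\var'}$ from precisely the two cancellation-versus-regularity pairings you describe. The only ingredient your sketch of the molecule-preservation step leaves implicit is that the far-field decay of ${\bf T}\phi_y$ in $d(u,y)$ comes from pairing the cancellation $\int\phi_y\,d\omega=0$ of the \emph{input} molecule with the second-variable regularity \eqref{k3} of $K$ (the ${\bf T}1=0$ subtraction alone controls only the local part); with that noted, the proposal is sound and follows essentially the intended route.
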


	\section{ Discrete reproducing formula in $ L^2\cap \dot{B}_{p,d}^{\alpha,q}$}
	\setcounter{equation}{0}
	In this section, we prove {\bf Theorem \ref{Dunkl reproducing}}. Firstly, we recall the sketch of the proof of formula \eqref{fdk} in $\cite{Hp}$. The key point is to prove the operator ${\bf T}_{M_0}$ (see \eqref{T_{M_0}}) is invertible on $L^2(d\omega)$ and ${({\bf T}_{M_0})}^{-1}$, the inverse of ${\bf T}_{M_0}$,  is bounded on $L^2(d\omega)$. For this, the author considered the operator
	$$
	{\bf R}_{M_0}(f):=f-{\bf T}_{M_0}(f),
	$$
	and proved that the $L^2$-norm of ${\bf R}_{M_0}$ is less than $1$. Then, the invertibility of ${\bf T}_{M_0}$ and the boundedness of $({\bf T}_{M_0})^{-1}$ follow. To show  that \eqref{fdk}) holds with respect to the Dunkl-Hardy space norm, the authors used the similar strategy.
	
	In this paper, the key to proving  {\bf Theorem \ref{Dunkl reproducing}} is to prove that ${({\bf T}_{M_0})}^{-1}$ is bounded in the Dunkl-Besov norm \eqref{Besov norm}. Hence, we need  estimate ${\bf R}_{M_0}$ on $L^2\cap \dot{B}_{p,d}^{\alpha,q}$ and show that the norm of ${\bf R}_{M_0}$ on $L^2\cap \dot{B}_{p,d}^{\alpha,q}$ is less than $1$. To this end, we introduce the following norm as an intermediary:
	\begin{equation}
		\|f\|_{\dot{B}_{p,cw}^{\alpha,q}}:=\left\{\sum_{k\in\Z}\left\|2^{k\alpha}\sum_{Q\in Q_{cw}^k}{\bf E}_k(f)(x_Q)\chi_Q(x) \right\|_p^q \right\}^{\frac{1}{q}}.
	\end{equation}

	Now we start our proof. The following estimate is crucial.
	\begin{lemma}\label{iterate}
		Let $k,k',M,M'$ belong to $\Z$, $\var>0,$ $|\alpha|<\varepsilon$,~ $\max\left\{\frac{N}{N+\varepsilon},\frac{N}{N+\varepsilon+\alpha}\right\}< p\leq\infty$ and $0<q\leq\infty$. Suppose that $S_{k,k'}(x,y)$ satisfies the following condition:
		$$
		\left|S_{k,k'}(x,y)\right|\leq C~2^{-|k-k'|\varepsilon}\frac{1}{V(x,y,2^{-k\lor{-k'}}+d(x,y))}\left(\frac{2^{{-k\lor{-k'}}}}{2^{-k\lor{-k'}}+d(x,y)}\right)^{\varepsilon}.
		$$
		Then for collections of dyadic  cubes $Q_1^{k'}$ with side length $2^{-k'-{M'}}$ and $Q_2^k$ with side length $2^{-k-M}$, a series of real numbers $\{\lambda_Q\}_{k\in \Z,Q\in Q_2^k}$, and a constant $\theta$ with $\max\left\{\frac{N}{N+\varepsilon},\frac{N}{N+\varepsilon+\alpha}\right\}<\theta\leq 1$ and $\theta<p$,  we have
		$$
		\left\{\sum_{k'\in\Z}\left\|2^{k'\alpha}\sum_{Q'\in Q_1^{k'}}\sum_{k\in\Z}\sum_{Q\in Q_2^k}\omega(Q)S_{k,k'}(x_{Q'},x_{Q})\lambda_{Q}\chi_{Q'}(x)\right\|_{p}^q \right\}^{\frac{1}{q}}\leq C \left\{\sum_{k\in\Z}\left\|2^{k\alpha}\sum_{Q\in Q_2^k}\lambda_Q\chi_Q(x) \right\|_p^q \right\}^{\frac{1}{q}},
		$$
		for any fixed points $x_{Q'}$ in $Q'\in Q_1^{k'}$ and $x_{Q}$ in $Q\in Q_{2}^{k}$, where the constant  $C$ only depends on $N,\alpha,\var,\theta,{p\over\theta},M$.
	\end{lemma}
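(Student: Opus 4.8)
The plan is to reduce the vector-valued ($\ell^q(L^p)$) bound to a single pointwise almost-orthogonality estimate dominating the inner double sum by a maximal function, and then to absorb the two summations by a discrete convolution (Young/Schur) argument together with the $L^{p/\theta}$-boundedness of the Hardy--Littlewood maximal operator. Write $g_k=\sum_{Q\in Q_2^k}\lambda_Q\chi_Q$ and, for $\theta$ as in the statement, let $\mathcal M_\theta f:=\big(\mathcal M(|f|^\theta)\big)^{1/\theta}$, where $\mathcal M$ is the Hardy--Littlewood maximal operator on $(\R^n,\|\cdot\|,d\omega)$. Since the cubes in $Q_1^{k'}$ tile $\R^n$, the function $x\mapsto\sum_{Q'\in Q_1^{k'}}[\,\cdots\,]\chi_{Q'}(x)$ is constant on each $Q'$, equal to the value computed at the fixed point $x_{Q'}$; hence everything reduces to estimating, for each $k'$ and each $Q'$, the quantity $\sum_k\sum_{Q\in Q_2^k}\omega(Q)S_{k,k'}(x_{Q'},x_Q)\lambda_Q$.

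First I would establish the key pointwise estimate: for every $k,k'\in\Z$,
\begin{equation}\label{keyptw}
\Big|\sum_{Q\in Q_2^k}\omega(Q)S_{k,k'}(x_{Q'},x_Q)\lambda_Q\Big|\le C\,2^{-|k-k'|\varepsilon}\,2^{-(k-k')_+N(\frac1\theta-1)}\,\mathcal M_\theta(g_k)(x_{Q'}),
\end{equation}
where $(k-k')_+=\max\{k-k',0\}$. To prove this I would use the hypothesis $\theta\le1$ and the elementary subadditivity $\sum_i a_i\le(\sum_i a_i^\theta)^{1/\theta}$ (the ``$\theta$-trick'') to pass the sum inside a $\theta$-power, turning $\sum_Q\omega(Q)|S_{k,k'}(x_{Q'},x_Q)|\,|\lambda_Q|$ into $\big(\int_{\R^n}\omega(B(z,2^{-k}))^{\theta-1}|S_{k,k'}(x_{Q'},z)|^\theta|g_k(z)|^\theta\,d\omega(z)\big)^{1/\theta}$ up to constants, using $\omega(Q)\approx\omega(B(x_Q,2^{-k}))$ (up to a factor depending only on $M$) and that $S_{k,k'}(x_{Q'},\cdot)$ is essentially constant on each $Q$. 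Then the size estimate for $S_{k,k'}$, an annular decomposition around $x_{Q'}$, and the summation bound \eqref{hehe} (i.e. \cite[Lemma 2.2]{Hp}) yield $\mathcal M(|g_k|^\theta)(x_{Q'})$; the mismatch between the cube scale $2^{-k}$ and the kernel scale $2^{-k\lor -k'}$ produces exactly the factor $2^{-(k-k')_+N(\frac1\theta-1)}$. Convergence of the $z$-integral forces $\theta>N/(N+\varepsilon)$, which is part of the hypothesis.

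Finally I would assemble the norm. Writing $2^{k'\alpha}=2^{k\alpha}2^{(k'-k)\alpha}$ and setting $b(k,k'):=2^{(k'-k)\alpha-|k-k'|\varepsilon-(k-k')_+N(\frac1\theta-1)}$, the estimate \eqref{keyptw} gives $2^{k'\alpha}|\cdots|\le C\sum_k b(k,k')\,2^{k\alpha}\mathcal M_\theta(g_k)(x_{Q'})$. Taking the $L^p$ norm in $x$ over the tiling $Q_1^{k'}$ (Minkowski's inequality if $p\ge1$, its $p$-subadditive analogue if $p<1$) and then the $\ell^q$ norm in $k'$, the sequence $b(k,k')$ acts as a discrete convolution kernel; by Young's inequality on $\ell^q$ (valid for all $q>0$ via the $\min\{q,1\}$-power triangle inequality) this reduces matters to $\big\{\sum_k\|2^{k\alpha}\mathcal M_\theta(g_k)\|_p^q\big\}^{1/q}$, provided $b(k,k')$ is summable in the relevant $\min\{p,q,1\}$-power uniformly in each variable. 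For $k\le k'$ this needs $\alpha<\varepsilon$, and for $k>k'$ it needs $\varepsilon+\alpha+N(\tfrac1\theta-1)>0$, which holds once $\theta>N/(N+\varepsilon+\alpha)$ (again part of the hypothesis, since $|\alpha|<\varepsilon$ makes $\varepsilon+\alpha>0$). The scalar maximal inequality $\|\mathcal M_\theta(g_k)\|_p\le C\|g_k\|_p$, valid because $\theta<p$ makes $\mathcal M$ bounded on $L^{p/\theta}$, then returns $\big\{\sum_k\|2^{k\alpha}g_k\|_p^q\big\}^{1/q}$, which is the right-hand side.

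I expect the main obstacle to be the pointwise estimate \eqref{keyptw}: getting the correct scale gain $2^{-(k-k')_+N(1/\theta-1)}$ simultaneously with the maximal operator $\mathcal M_\theta$ (rather than $\mathcal M$), and transferring the pointwise maximal values at the fixed centers $x_{Q'}$ into a genuine $L^p$ bound over $Q'$ --- the latter requiring the doubling property and a compatible choice of the side-length parameters $M,M'$ so that $\mathcal M_\theta(g_k)$ is comparable across each cube $Q'$. Once this is in place, the convolution and maximal-inequality machinery is routine, and the admissible ranges of $\alpha,p,q,\theta$ are dictated precisely by the summability of $b(k,k')$ and the $L^{p/\theta}$-boundedness of $\mathcal M$.
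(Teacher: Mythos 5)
Your overall strategy coincides with the paper's: the $\theta$-power trick, an annular decomposition converting the sum over $Q\in Q_2^k$ into a Hardy--Littlewood maximal function, and a discrete Schur/Young argument in $k,k'$ combined with the $L^{p/\theta}$-boundedness of $\mathcal{M}$. However, your key pointwise estimate is false as written: for $k>k'$ the scale-mismatch factor must be a \emph{loss} $2^{+(k-k')N(\frac{1}{\theta}-1)}$, not a gain $2^{-(k-k')N(\frac{1}{\theta}-1)}$. Test it with $\lambda_Q\equiv 1$: the left-hand side is $\sum_Q\omega(Q)\left|S_{k,k'}(x_{Q'},x_Q)\right|\approx 2^{-|k-k'|\var}$ by \eqref{hehe}, while $\mathcal{M}_\theta(g_k)\equiv 1$, so your inequality would assert $1\le C\,2^{-(k-k')N(\frac{1}{\theta}-1)}$, which fails for $k\gg k'$. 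The loss is unavoidable because for $k>k'$ the cubes $Q$ (scale $2^{-k}$) are much finer than the kernel scale $2^{-k\lor -k'}=2^{-k'}$, and the reverse doubling bound \eqref{doubel condition} applied to $\omega(Q)^{\theta-1}$ costs exactly $2^{(k-k')N(1-\theta)}$ at the $\theta$-power level. Your own final condition betrays the slip: with your (gain) factor the requirement for $k>k'$ would be $\var+\alpha+N(\frac{1}{\theta}-1)>0$, which is \emph{not} equivalent to $\theta>N/(N+\var+\alpha)$; it is the corrected (loss) version, with requirement $\var+\alpha-N(\frac{1}{\theta}-1)>0$, that is exactly $\theta>N/(N+\var+\alpha)$. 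So the hypotheses end up playing the role you assign them, but the central display must be repaired before the argument is sound.

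Two smaller points. First, the kernel decays in the Dunkl metric $d$, whereas the maximal operator on $(\R^n,\|\cdot\|,d\omega)$ averages over Euclidean balls; the passage from $d$-annuli around $x_{Q'}$ to Euclidean averages is where the sum over $\sigma\in G$ (replacing $d(x,x_Q)$ by $\min_\sigma\|\sigma(x)-x_Q\|$ and evaluating $\mathcal{M}$ at $\sigma(x)$) must enter explicitly --- you allude to \eqref{hehe} but this step should be spelled out, and it is also the cleanest way to dispose of your ``transfer from $x_{Q'}$ to $x\in Q'$'' worry, since $2^{-k\lor-k'}+d(x_{Q'},x_Q)\sim 2^{-k\lor-k'}+d(x,x_Q)$ for $x\in Q'$. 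Second, for $p<1$ plain Minkowski in $L^p$ is unavailable and its $p$-subadditive substitute changes the discrete-convolution structure of your final step; the cleaner route (taken in the paper) is to keep the $\theta$-th power inside and apply Minkowski to the $L^{p/\theta}$-norm, with $p/\theta>1$, of the sum of $\theta$-powers, only extracting the $\frac{1}{\theta}$ root at the end.
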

 \begin{proof}
  By \eqref{doubel condition}, for $x,y\in\R^n$ and radius $r$, if $d(x,y)\lesssim r$, then we have
	$$
	V(x,y,r)\sim \omega(B(x,r))\sim \omega(B(y,r)).
	$$
	If $x$ belongs to $Q'\in Q_1^{k'}$, then $\left\|x-x_{Q'}\right\|\leq 2^{-k'-M'}$. So for $x_Q\in Q_2^k$ and $x,x_{Q'}\in Q_1^{k'}$, we have $\left\{2^{-k\lor -k'}+d(x_{Q'},x_Q)\right\}\sim \left\{2^{-k\lor -k'}+d(x,x_Q)\right\}$. By the definition of $d(x,y)$,
	\begin{equation}\label{AA}
		\begin{aligned}
			&|S_{k,k'}(x_{Q'},x_{Q})|\chi_{Q'}(x)
			\\
			&\leq~ C~2^{-|k-k'|\varepsilon}\frac{1}{V(x,x_Q,2^{{-k\lor{-k'}}}+d(x,x_Q))}\left(\frac{2^{{-k\lor{-k'}}}}{2^{{-k\lor{-k'}}}+d(x,x_Q)}\right)^{\varepsilon}\chi_{Q'}(x)
			\\
			&\leq~\sum_{\sigma\in G}C~2^{|k-k'|\varepsilon}\frac{1}{V(\sigma(x),x_Q,2^{{-k\lor{-k'}}}+\|\sigma(x)-x_Q\|)}\left(\frac{2^{{-k\lor{-k'}}}}{2^{{-k\lor{-k'}}}+\|\sigma(x)-x_Q\|}\right)^{\varepsilon}\chi_{Q'}(x).
		\end{aligned}
	\end{equation}
	
	Because $0<\theta\leq 1$, by the $\theta$-triangle inequality $|a+b|^{\theta}\leq |a|^{\theta}+|b|^{\theta}$,
	\begin{equation}\label{DDD}
		\begin{aligned}
			&\left\|2^{k'\alpha }\sum_{k\in\Z}\sum_{Q'\in Q_1^{k'}}\sum_{Q\in Q_2^k}\omega(Q)~S_{k,k'}(x_{Q'},x_{Q})\lambda_Q\chi_{Q'}(x)\right\|_p
			\\
			&\leq~\left\|2^{k'\alpha\theta}\sum_{k\in\Z}2^{-|k-k'|\var\theta} \sum_{\sigma\in G}\sum_{Q'\in Q_1^{k'}}  \sum_{Q\in Q_2^k}\frac{\omega(Q)^{\theta}}{V(\sigma(x),x_Q,2^{{-k\lor{-k'}}}+\|\sigma(x)-x_Q\|)^{\theta}}~~\right.
			\\
			&\left.~~~~~~~~~~~~~~~~~~~~~~~~~~~~~~~~~~~~~~\times~\left(\frac{2^{{-k\lor{-k'}}}}{2^{{-k\lor{-k'}}}+\|\sigma(x)-x_Q\|}\right)^{\varepsilon\theta}|\lambda_Q|^{\theta}\chi_{Q'}(x)\right\|_{p\over\theta}^{1\over \theta}.
		\end{aligned}
	\end{equation}	

	We now estimate
	$$
	I:=\sum_{Q\in Q_2^k} \frac{\omega(Q)^{\theta}}{V(\sigma(x),x_Q,2^{{-k\lor{-k'}}}+\|\sigma(x)-x_Q\|)^{\theta}}\left(\frac{2^{{-k\lor{-k'}}}}{2^{{-k\lor{-k'}}}+\|\sigma(x)-x_Q\|}\right)^{\varepsilon\theta} |\lambda_Q|^\theta.
	$$
	Let
	$$
	A_0:=\left\{Q:\;Q\in Q_2^k, \;\|\sigma(x)-x_Q\|\leq 2^{{-k\lor{-k'}}}\right\},
	$$
	and for $l\geq 1$,
	$$
	A_l:=\left\{Q:\;Q\in Q_2^k,\;2^{{l-1+(-k\lor{-k'})}}\leq\|\sigma(x)-x_Q\|\leq 2^{l+(-k\lor{-k'})}\right\}.
	$$
	Because $\omega(Q)\sim\omega(B(x_Q,2^{-k}))$, we have
	\begin{equation}
		\begin{aligned}
			I\leq C\sum_{l=0}^{\infty}\sum_{Q\in A_l}\left(\frac{\omega(B(x_Q,2^{-k}))}{\omega(B(x_Q,2^{l+{(-k\lor{-k'})}}))}\right)^{\theta-1}\frac{\omega(Q)}{\omega(B(\sigma(x),2^{l+(-k\lor{-k'})}))}\left(\frac{2^{-k\lor{-k'}}}{2^{l+(-k\lor{-k'})}}\right)^{\theta\varepsilon}|\lambda_{Q}|^{\theta}.
		\end{aligned}
	\end{equation}
	Using \eqref{doubel condition}, $\omega(Q)=\int_{\R^n}\chi_Q~{\rm d\omega(x)}$, and the Hardy-Littlewood maximal  operator ${\bf  M}$ on the  space $(\R^n, \|\cdot\|,d\omega)$,
	\begin{equation}\label{aa}
		\begin{aligned}
			I
			\leq &C~\sum_{l=0}^{\infty}
			\left(\frac{\omega(B(x_Q,2^{-k}))}{\omega(B(x_Q,2^{l+{(-k\lor{-k'})}}))}\right)^{\theta-1}\left(\frac{2^{-k\lor{-k'}}}{2^{l+(-k\lor{-k'})}}\right)^{\theta\varepsilon}
			\\ \ds
			&~~~~~~~~~~~~\times~~\frac{1}{\omega(B(\sigma(x),2^{l+(-k\lor{-k'})}))}\int_{\|y-\sigma(x)\|\leq 2^{l+(-k\lor{-k'})}}\sum_{Q\in Q_2^k}|\lambda_{Q}|^{\theta}\chi_Q(y)~d\omega(y)
			\\
			&	\leq C~2^{[-k-(-k\lor{-k'})]N(\theta-1)}\sum_{l=0}^{\infty} 2^{-lN(\theta-1)}2^{-l\theta\varepsilon}{\bf M}\left(\sum_{Q\in Q_2^k}|\lambda_{Q}|^{\theta}\chi_Q\right)(\sigma(x))
			\\
			&\leq C~2^{[-k-(-k\lor{-k'})]N(\theta-1)}{\bf M}\left(\sum_{Q\in Q_2^k}|\lambda_{Q}|^{\theta}\chi_Q\right)(\sigma(x)),
		\end{aligned}
	\end{equation}
	where the last inequality holds because $\theta>\frac{N}{N+\varepsilon}$.
	
	Note that ${p\over \theta}>1$. Then by \eqref{DDD}, \eqref{aa} and Minkowski inequality,
	\begin{equation}
		\begin{aligned}
			&\left\|2^{k'\alpha }\sum_{k\in\Z}\sum_{Q'\in Q_1^{k'}}\sum_{Q\in Q_2^k}\omega(Q)~S_{k,k'}(x_{Q'},x_{Q})\lambda_Q\chi_{Q'}(x)\right\|_p
			\\ 
			&\leq C \left\{\sum_{k\in\Z}2^{(k'-k)\alpha\theta-|k-k'|\varepsilon\theta}2^{[-k-(-k\lor{-k'})]N(\theta-1)}2^{k\alpha\theta} \sum_{\sigma\in G} \left\|{\bf M}\left(\sum_{Q\in Q_2^k}|\lambda_{Q}|^{\theta}\chi_Q\right)(\sigma(x))\right\|_{\frac{p}{\theta}}\right\}^{\frac{1}{\theta}}. 		
		\end{aligned}
	\end{equation}	
	Since $G$ is a finite group and $\int_{\R^n} f({\sigma(x)})~d\omega(x)=\int_{\R^n} f(x)~d\omega(x)$, we have
	\begin{equation}
		\begin{aligned}
			&\left\{\sum_{k'\in\Z}\left\|2^{k'\alpha }\sum_{Q'\in Q_1^{k'}}\sum_{k\in\Z}\sum_{Q\in Q_2^k}\omega(Q)S_{k,k'}(x_{Q'},x_{Q})\lambda_{Q}\chi_{Q'}(x)\right\|_{p}^q \right\}^{\frac{1}{q}}
			\\ 
			&\leq C\left\{\sum_{k'\in\Z}\left\{\sum_{k\in\Z}2^{{(k'-k)\alpha\theta-|k-k'|\varepsilon\theta}+{[-k-(-k\lor{-k'})]N(\theta-1)}}~2^{k\alpha\theta}\left\|{\bf M}\left(\sum_{Q\in Q_2^k}|\lambda_{Q}|^{\theta}\chi_Q\right)(x)\right\|_{\frac{p}{\theta}}\right\}^{\frac{q}{\theta}}
			\right\}^{\frac{1}{q}}.
		\end{aligned}
	\end{equation}
	Let $b_{k,k'}=2^{{(k'-k)\alpha\theta-|k-k'|\varepsilon\theta}+{[-k-(-k\lor{-k'})]N(\theta-1)}}$ and $g_k=2^{k\alpha\theta}\left\|{\bf M}\left(\sum\limits_{Q\in Q_2^k}|\lambda_{Q}|^{\theta}\chi_Q\right)(x)\right\|_{\frac{p}{\theta}}$.  Then we have
	\begin{equation}\label{bb}
		\left\{\sum_{k'\in\Z}\left\|2^{k'\alpha }\sum_{Q'\in Q_1^{k'}}\sum_{k\in\Z}\sum_{Q\in Q_2^k}\omega(Q)S_{k,k'}(x_{Q'},x_{Q})\lambda_{Q}\chi_{Q'}(x)\right\|_{p}^q \right\}^{\frac{1}{q}}\leq C\left\{\sum_{k'\in\Z}\left(\sum_{k\in\Z}b_{k,k'}g_k\right)^{q\over\theta}\right\}^{1\over q}.
	\end{equation}
	Because ${p\over \theta}>1$, by maximal function inequality,  we have
	\begin{equation}\label{bkgkM}
	\begin{aligned}
		\sum_{k\in\Z}\left(g_k\right)^{\frac{1}{\theta}}
		\leq C \sum_{k\in \Z}2^{k\alpha }\left\|\sum_{Q\in Q_2^k}|\lambda_Q|^{\theta}\chi_Q(x)\right\|_{\frac{p}{\theta}}^{\frac{1}{\theta}}
		=C\sum_{k\in\Z}2^{k\alpha}\left\|\sum_{Q\in Q_2^k}|\lambda_Q|\chi_Q(x)\right\|_{p}.
	\end{aligned}
	\end{equation}
	
	If $1<\frac{q}{\theta}<\infty$, then by \eqref{bb} and H\"{o}lder's inequality, we have
	\begin{equation}
		\begin{aligned}
			&\left\{\sum_{k'\in\Z}\left\|2^{k'\alpha }\sum_{Q'\in Q_1^{k'}}\sum_{k\in\Z}\sum_{Q\in Q_2^k}\omega(Q)S_{k,k'}(x_{Q'},x_{Q})\lambda_{Q}\chi_{Q'}(x)\right\|_{p}^q \right\}^{\frac{1}{q}}
			\\
			&\leq~C\left\{\sum_{k'\in\Z}\left(\sum_{k\in\Z}b_{k,k'}g_k\right)^{q\over\theta}\right\}^{1\over q}
			~\leq~C\left\{\sum_{k'\in\Z}~\left(\sum_{k\in\Z}b_{k,k'}\right)^{{q\over\theta} -1}\left(\sum_{k\in\Z} b_{k,k'}g_k^{\frac{q}{\theta}}\right)\right\}^{1\over q}.
		\end{aligned}
	\end{equation}
	Since  $\frac{N}{N+\varepsilon+\alpha}<\theta\leq 1$,   one can check that  $\sup\limits_{k'\in \Z}\left\{\sum\limits_{k\in\Z}b_{k,k'}\right\}<C$ and $\sup\limits_{k\in\Z}\left\{\sum\limits_{k'\in\Z}b_{k,k'}\right\} <C$.  Then we get that
	\begin{equation}\label{xjq1}
		\begin{aligned}		
			&\left\{\sum_{k'\in\Z}~\left(\sum_{k\in\Z}b_{k,k'}\right)^{{q\over\theta} -1}\left(\sum_{k\in\Z} b_{k,k'}g_k^{\frac{q}{\theta}}\right)\right\}^{1\over q}
			\\ 
			&\leq ~C~\left\{ \sum_{k'\in\Z}\sum_{k\in\Z} b_{k,k'}g_k^{q\over\theta} \right\}^{1\over q}
			= \left\{ \sum_{k\in\Z}\left(\sum_{k'\in\Z} b_{k,k'}\right)g_k^{q\over\theta} \right\}^{1\over q}
			\\
			&\leq~C\left\{\sum_{k\in\Z}\left\| 2^{k\alpha}\sum_{Q\in Q_2^k}|\lambda_Q|\chi_Q(x)\right\|_{p}^{q}\right\}^{\frac{1}{q}}.
		\end{aligned}
	\end{equation}
	If $\frac{q}{\theta}\leq 1$,  by \eqref{bb},
	\begin{equation}
		\begin{aligned}
			&\left\{\sum_{k'\in\Z}\left\|2^{k'\alpha }\sum_{Q'\in Q_1^{k'}}\sum_{k\in\Z}\sum_{Q\in Q_2^k}\omega(Q)S_{k,k'}(x_{Q'},x_{Q})\lambda_{Q}\chi_{Q'}(x)\right\|_{p}^q \right\}^{\frac{1}{q}}
			\\ 
			&\leq~C\left\{\sum_{k'\in\Z}\left(\sum_{k\in\Z}b_{k,k'}g_k\right)^{q\over\theta}\right\}^{1\over q}
			~\leq~ C\left\{\sum_{k'\in\Z}\sum_{k\in\Z}\left(b_{k,k'}\right)^{\frac{q}{\theta}}\left(g_k\right)^{\frac{q}{\theta}}\right\}^{1\over q}
			\\
			&= \left\{\sum_{k\in\Z}\left(\sum_{k'\in\Z}\left(b_{k,k'}\right)^{\frac{q}{\theta}}\right)\left(g_k\right)^{\frac{q}{\theta}}\right\}^{1\over q}
			~\leq~ C\left\{\sum_{k\in\Z}\left\| 2^{k\alpha}\sum_{Q\in Q_2^k}|\lambda_Q|\chi_Q(x)\right\|_{p}^{q}\right\}^{\frac{1}{q}}.
			\end{aligned}
	\end{equation}

	If $q=\infty$, then by \eqref{bb},
	\begin{equation}
		\begin{aligned}
			&\sup_{k'\in\Z}~\left\| 2^{k'\alpha }\sum_{Q'\in Q_1^{k'}}\sum_{k\in\Z}\sum_{Q\in Q_2^k}\omega(Q)S_{k,k'}(x_{Q'},x_{Q})\lambda_{Q}\chi_{Q'}(x)\right\|_{p}
			\\
			&\leq C~ \sup_{k'\in\Z}\left(\sum_{k\in\Z}b_{k,k'}g_k\right)^{1\over\theta}
			\leq  C~ \sup_{k'\in\Z}\left\{\sum_{k\in\Z}b_{k,k'}\right\}^{1\over\theta}\sup_{k\in\Z}\left(g_k\right)^{1\over\theta}
			\\ 
			&\leq C~ \sup_{k\in\Z}\left\| 2^{k\alpha }\sum_{Q\in Q_2^k}|\lambda_Q|\chi_Q(x)\right\|_{p}.
		\end{aligned}
	\end{equation}
\end{proof}

	Now we prove $({\bf T}_{M_0})^{-1}$ is bounded on $L^2\cap \dot{B}_{p}^{\alpha,q}$. 
	\begin{lemma}\label{Tbounded}~
		Suppose $|\alpha|<1$,~ $\max\left\{\frac{N}{N+1},\frac{N}{N+1+\alpha}\right\}< p\leq\infty$ and $0<q\leq\infty$.  Then ${\bf T}_{M_0}$ and  $({\bf T}_{M_0})^{-1}$ are bounded on   $L^2\cap \dot{B}_{p,d}^{\alpha,q}$.
	\end{lemma}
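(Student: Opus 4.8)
The plan is to deduce both the boundedness of ${\bf T}_{M_0}$ and of $({\bf T}_{M_0})^{-1}$ from the almost-orthogonality estimate of Lemma \ref{iterate}, after first recording that the Dunkl--Besov norm has exactly the form handled there. Since the cubes in $Q_d^k$ are disjoint and $\omega(Q)=\int_{\R^n}\chi_Q\,d\omega$,
$$\|f\|_{\dot{B}_{p,d}^{\alpha,q}}=\left\{\sum_{k\in\Z}\left\|2^{k\alpha}\sum_{Q\in Q_d^k}{\bf D}_k(f)(x_Q)\chi_Q\right\|_p^q\right\}^{1/q},$$
so the right-hand side of Lemma \ref{iterate} with $\lambda_Q={\bf D}_k(f)(x_Q)$ and $Q_1^{k'}=Q_2^k=Q_d^k$ is precisely $\|f\|_{\dot{B}_{p,d}^{\alpha,q}}$. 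Because $|\alpha|<1$, I first fix $\var\in(|\alpha|,1)$ with $p>\max\{N/(N+\var),N/(N+\var+\alpha)\}$ and a corresponding $\theta$ in the admissible range of Lemma \ref{iterate}; this is possible exactly under the stated hypotheses on $p$.

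For ${\bf T}_{M_0}$ itself, applying ${\bf D}_{k'}$ to the defining sum in \eqref{fdk} and evaluating at $x_{Q'}$ gives
$${\bf D}_{k'}({\bf T}_{M_0}f)(x_{Q'})=\sum_{k\in\Z}\sum_{Q\in Q_d^k}\omega(Q)\,S_{k,k'}(x_{Q'},x_Q)\,{\bf D}_k(f)(x_Q),\qquad S_{k,k'}(x_{Q'},x_Q):=\int_{\R^n}D_{k'}(x_{Q'},u)\,D_k^{M_0}(u,x_Q)\,d\omega(u).$$
Since $D_k^{M_0}(\cdot,x_Q)$ is a smooth molecule (as recorded in the Preliminaries), the composition estimate \cite[Lemma 2.11]{Hp} already used in the proof of Lemma \ref{basis} shows that $S_{k,k'}$ satisfies the hypothesis of Lemma \ref{iterate} with exponent $\var$. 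Feeding $\lambda_Q={\bf D}_k(f)(x_Q)$ into Lemma \ref{iterate} yields $\|{\bf T}_{M_0}f\|_{\dot{B}_{p,d}^{\alpha,q}}\le C\|f\|_{\dot{B}_{p,d}^{\alpha,q}}$, which is the first assertion.

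For the inverse I follow the scheme recalled at the start of the section: set ${\bf R}_{M_0}=I-{\bf T}_{M_0}$ and show that its Dunkl--Besov operator norm is $<1$ once $M_0$ is large. Using $\sum_k{\bf D}_k={\bf I}$ and ${\bf D}_k^{M_0}=\sum_{l=k-M_0}^{k+M_0}{\bf D}_l$ gives the continuous Calder\'on identity $\sum_k{\bf D}_k^{M_0}{\bf D}_k={\bf I}-{\bf E}$ with ${\bf E}=\sum_{|k-l|>M_0}{\bf D}_l{\bf D}_k$, so I split ${\bf R}_{M_0}={\bf E}+\left(\sum_k{\bf D}_k^{M_0}{\bf D}_k-{\bf T}_{M_0}\right)$ into a Calder\'on tail and a discretization error. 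Applying ${\bf D}_{k'}$ to each piece and matching against $\lambda_Q={\bf D}_k(f)(x_Q)$ produces, in both cases, a kernel of the same form as $S_{k,k'}$ but carrying an extra gain $2^{-M_0\var'}$: for the tail this comes from the restriction $|k-l|>M_0$ together with the $2^{-|k-l|\var}$ almost-orthogonality of ${\bf D}_l{\bf D}_k$, and for the discretization error it comes from the H\"older regularity \eqref{regularity1}--\eqref{regularity2} of the composed kernel and of ${\bf D}_k f$ over dyadic cubes of side $2^{-k-M_0}$, i.e. of relative size $2^{-M_0}$ at scale $2^{-k}$. Lemma \ref{iterate} then gives $\|{\bf R}_{M_0}f\|_{\dot{B}_{p,d}^{\alpha,q}}\le C\,2^{-M_0\var'}\|f\|_{\dot{B}_{p,d}^{\alpha,q}}$, so for $M_0$ large enough $C2^{-M_0\var'}<1$. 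Since $L^2\cap\dot{B}_{p,d}^{\alpha,q}$ carries a $\rho$-triangle inequality with $\rho=\min\{1,p,q\}$ and the Neumann series $\sum_m{\bf R}_{M_0}^m$ already converges to $({\bf T}_{M_0})^{-1}$ in $L^2(d\omega)$ by \cite{Hp}, the same series is summable in the Dunkl--Besov quasi-norm; lower semicontinuity of that quasi-norm under $L^2$-convergence (via Fatou applied to the pointwise-convergent coefficients ${\bf D}_k(\cdot)(x_Q)$) then gives $\|({\bf T}_{M_0})^{-1}f\|_{\dot{B}_{p,d}^{\alpha,q}}\le C\|f\|_{\dot{B}_{p,d}^{\alpha,q}}$.

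I expect the discretization-error estimate to be the main obstacle. The difficulty is twofold: one must verify that the error kernel retains the full spatial profile $V(x,y,\cdot)^{-1}(\cdots)^{\var}$ demanded by Lemma \ref{iterate}, not merely the $2^{-M_0\var'}$ decay factor; and one must pass from the pointwise values ${\bf D}_k f(y)$, with $y$ ranging over a cube $Q$, to the frame coefficients $\lambda_Q={\bf D}_k f(x_Q)$. The latter replacement is legitimate because the oscillation of ${\bf D}_k f$ over $Q$ is itself $O(2^{-M_0\var})$ and is absorbed by the Hardy--Littlewood maximal operator already built into the proof of Lemma \ref{iterate}.
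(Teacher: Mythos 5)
Your treatment of ${\bf T}_{M_0}$ itself is correct and matches the paper's mechanism: the composed kernel $D_{k'}D_k^{M_0}$ obeys the almost-orthogonality bound of \cite[Lemma 2.11]{Hp} (this is exactly \eqref{DkDj}), and Lemma \ref{iterate} applied with $\lambda_Q={\bf D}_k(f)(x_Q)$ gives $\|{\bf T}_{M_0}f\|_{\dot{B}_{p,d}^{\alpha,q}}\le C\|f\|_{\dot{B}_{p,d}^{\alpha,q}}$.

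The gap is in your estimate of ${\bf R}_{M_0}$. Both pieces of your decomposition --- the Calder\'on tail $\sum_{|l-k|>M_0}{\bf D}_l{\bf D}_kf$ and the discretization error $\sum_k\sum_Q\int_Q\bigl[D_k^{M_0}(x,v){\bf D}_kf(v)-D_k^{M_0}(x,x_Q){\bf D}_kf(x_Q)\bigr]d\omega(v)$ --- involve the function ${\bf D}_kf(v)$ at \emph{all} points $v$, not only at the sampled points $x_Q$. To dominate the resulting expressions by the discrete norm $\|f\|_{\dot{B}_{p,d}^{\alpha,q}}$, which only sees the coefficients $\{{\bf D}_kf(x_Q)\}$, you need a Plancherel--P\'olya-type inequality controlling $\|{\bf D}_kf\|_p$, or the oscillation of ${\bf D}_kf$ over each $Q$, by those coefficients; your remark that ``the oscillation of ${\bf D}_kf$ over $Q$ is $O(2^{-M_0\var})$'' hides a factor of the form $\int(\hbox{size profile})\,|f|\,d\omega$, which is not controlled by the Besov norm without precisely the discrete reproducing formula that Theorem \ref{Dunkl reproducing} is in the process of establishing. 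This circularity is why the paper does not estimate ${\bf R}_{M_0}$ directly on $\|\cdot\|_{\dot{B}_{p,d}^{\alpha,q}}$: it introduces the auxiliary Coifman--Weiss norm $\|\cdot\|_{\dot{B}_{p,cw}^{\alpha,q}}$, for which the exact discrete reproducing formula \eqref{CW reproducing} is already available, expands ${\bf R}_{M_0}f=\sum_k\sum_Q\omega(Q){\bf R}_{M_0}\widetilde{E}_k(\cdot,x_Q){\bf E}_k(f)(x_Q)$, invokes the fact from \cite{Hp} that ${\bf R}_{M_0}$ is a Dunkl--Calder\'on--Zygmund operator with $\|{\bf R}_{M_0}\|_{dcz}\le C2^{-M_0\delta}$ together with the sandwich estimate of Lemma \ref{almost orthogonal estimate with dcz} to obtain smallness on the $cw$-norm, and only then proves the equivalence $\|\cdot\|_{\dot{B}_{p,d}^{\alpha,q}}\sim\|\cdot\|_{\dot{B}_{p,cw}^{\alpha,q}}$ (Steps 2--3) to transfer the conclusion. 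To repair your argument you would either have to import the $\|{\bf R}_{M_0}\|_{dcz}\le C2^{-M_0\delta}$ bound and an already-known discrete reproducing formula, as the paper does, or prove the Plancherel--P\'olya inequality independently; as written, the smallness of ${\bf R}_{M_0}$ on the Dunkl--Besov norm is asserted rather than proved. (Your final Neumann-series and Fatou argument would be fine once that smallness is in hand.)
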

\begin{proof}
    Since  the $L^2$-boundedness of  ${\bf T}_{M_0}$ and $({\bf T}_{M_0})^{-1}$ has been proved by \cite{Hp}, here we only  show the boundedness in Dunkl-Besov norm.
	First, we will prove ${\bf T}_{M_0}$ and $({\bf T}_{M_0})^{-1}$ are bounded on the norm $\|\cdot\|_{\dot{B}_{p,cw}^{\alpha,q}}$ (see \eqref{CW norm}), then we prove that $\|\cdot\|_{\dot{B}_{p,d}^{\alpha,q}}\sim \|\cdot\|_{\dot{B}_{p,cw}^{\alpha,q}}$.  In this paper, for two operators $\bf{T_1}$ and $\bf{T_2}$ with kernels  $T_1(x,y)$ and $T_2(x,y)$, the kernel  of $\bf{T_1}\circ \bf{T_2}$ is denoted by $T_1T_2(x,y)$.

	{\bf {Step 1}:}
	${\bf T}_{M_0}$ and $({\bf T}_{M_0})^{-1}$ are bounded on the norm $\|\cdot\|_{\dot{B}_{p,cw}^{\alpha,q}}$.
	To show this estimate, the key point is to write  $f ={{\bf T}_{M_0}}(f)+{\bf R}_{M_0}(f)$. In \cite{Hp}, the authors proved that ${\bf R}_{M_0}$ is a Dunkl-Calder\'{o}n-Zygmund operator with $\left\|{\bf R}_{M_0}\right\|_{dcz}\leq C\ 2^{-M_0\delta}$ for some constant $\delta>0$. We claim that
	\begin{equation}\label{claim}
	\left\|{\bf R}_{M_0}(f)\right\|_{\dot{B}_{p,cw}^{\alpha,q}}\leq C\left\|{\bf R}_{M_0}\right\|_{dcz}\left\|f\right\|_{\dot{B}_{p,cw}^{\alpha,q}},
	\end{equation}
	where the constant $C$ is independent of $M_0$. After proving the claim \eqref{claim}, we will choose $M_0$ such that $C\left\|{\bf R}_{M_0}\right\|_{dcz}<{1\over 2}$, which implies ${\bf T}_{M_0}$ and $({\bf T}_{M_0})^{-1}$ are bounded on the norm $\|\cdot\|_{\dot{B}_{p,cw}^{\alpha,q}}$.
	
	Now we prove the claim \eqref{claim}. By \eqref{CW reproducing},
	\begin{equation}
		\begin{aligned}
			\left\|{\bf R}_{M_0} f\right\|_{\dot{B}_{p,cw}^{\alpha,q}}&=\left\{\sum_{k'\in\Z}\left\|2^{k'\alpha}\sum_{Q'\in Q_{cw}^{k'}} {\bf E}_{k'}{\bf R}_{M_0}(f)(x_{Q'})\chi_{Q'}(x)\right\|_{p}^{q}\right\}^{\frac{1}{q}}
			\\
			&=\left\{\sum_{k'\in\Z}\left\|2^{k'\alpha}\sum_{Q'\in Q_{cw}^{k'}}{\bf  E}_{k'}\left(\sum_{k\in\Z}\sum_{Q\in Q_{cw}^k}
			\omega(Q){R}_{M_0}\widetilde{E}_k(\cdot,x_Q){\bf E}_k(f)(x_Q)\right)(x_{Q'})\chi_{Q'}(x)\right\|_{p}^{q}\right\}^{\frac{1}{q}}
			\\
			&\leq\left\{\sum_{k'\in\Z}\left\|2^{k'\alpha}\sum_{Q'\in Q_{cw}^{k'}}\sum_{k\in\Z}\sum_{Q\in Q_{cw}^k}
			\omega(Q)\left|E_{k'}{R}_{M_0}\widetilde{E}_k(x_{Q'},x_Q){\bf E}_k(f)(x_Q)\right|\chi_{Q'}(x)\right\|_{p}^{q}\right\}^{\frac{1}{q}}.
		\end{aligned}
	\end{equation}
	By {\bf Lemma \ref{almost orthogonal estimate with dcz}}, choosing $\varepsilon_0$ such that $|\alpha|<\var_0$ and $\max\left\{{N\over{N+\var_0}},{N\over {N+\var_0+\alpha}}\right\}<p$, we have
	$$
	\left|{ E}_{k'}R_{M_0}\widetilde{E}_k(x_{Q'},x_Q)\right|\leq C\left\|{\bf R}_{M_0}\right\|_{dcz}2^{-|k-k'|\var_0}\frac{1}{V(x_{Q'},x_{Q},2^{-k\lor{-k'}}+d(x_{Q'},x_Q))}\left(\frac{2^{-k\lor{-k'}}}{2^{-k\lor{-k'}}+d(x_{Q'},x_Q)}\right)^{\var_0}.
	$$
	We choose $\theta_0$ such that $\max\left\{\frac{N}{N+\varepsilon_0},\frac{N}{N+\varepsilon_0+\alpha}\right\}<\theta_0\leq 1$ and $\theta_0<p$, then by {\bf Lemma \ref{iterate}}, since $Q_{cw}^k$ are collections of dyadic cubes with the  side length $2^{-k-M_1}$, there exists a constant $C=C(N,\alpha,\var_0,\theta_0,{p\over\theta_0},M_1)$ such that
	\begin{equation}
		\begin{aligned}
			\left\|{\bf R}_{M_0}f\right\|_{\dot{B}_{p,cw}^{\alpha,q}} &\leq C\left\|{\bf R}_{M_0}\right\|_{dcz}\left\{\left\|2^{k\alpha}\sum_{Q\in Q_{cw}^{k}} {\bf E}_{k}(f)(x_{Q})\chi_Q(x)\right\|_{p}^{q}\right\}^{\frac{1}{q}}
			\\
			&=C\left\|{\bf {\bf R}}_{M_0}\right\|_{dcz}\left\|f\right\|_{\dot{B}_{p,cw}^{\alpha,q}}.
		\end{aligned}
	\end{equation}
	Choosing $M_0$ such that $2^{M_0\delta}>2C$, we finish the proof of ${\bf Step ~1}$.	Now we turn to prove $\|f\|_{\dot{B}_{p,d}^{\alpha,q}}\sim \|f\|_{\dot{B}_{p,cw}^{\alpha,q}}$. We will do this by two steps.

	{\bf{Step 2:}} We prove
	~$	\left\|f\right\| _{\dot{B}_{p,d}^{\alpha,q}}~\leq~	C\left\|f\right\| _{\dot{B}_{p,cw}^{\alpha,q}}$.
	One can check that the Dunkl-Besov norm $\|f\|_{\dot{B}_{p,d}^{\alpha,q}}$ can be written as
	\begin{equation}\label{CW norm}
		\|f\|_{\dot{B}_{p,d}^{\alpha,q}}=\left\{\sum_{k\in\Z}\left\|2^{k\alpha}\sum_{Q\in Q^k_d}{\bf D}_k(f)(x_Q)\chi_Q(x) \right\|_p^q \right\}^{\frac{1}{q}}.
	\end{equation}
	To show the above estimate, by \eqref{CW reproducing}, we have
	\begin{equation}\label{hh}
	\begin{aligned}
		\left\|f\right\| _{\dot{B}_{p,d}^{\alpha,q}}
		&=\left\{\sum_{k'\in\Z}    \left\| 2^{k'\alpha } \sum_{Q'\in Q_{d}^{k'}}{{\bf D}_{k'} (f)(x_{Q'}) \chi{_{Q'}}(x) }\right\|_{p}^{q}\right\} ^{1\over q}
		\\
		&=\left\{\sum_{k'\in\Z}  \left\|2^{k'\alpha }      \sum_{Q'\in Q_d^{k'}}{{\bf D}_{k'}   \left(    \sum_{k\in\Z}\sum_{Q\in Q_{cw}^{k}} \omega(Q)\Tilde{E}_k(\cdot,x_{Q}){\bf E}_k(f)(x_{Q})    \right)    (x_{Q'}) \chi{_{Q'}}(x) }\right\|_{p}^{q}\right\} ^{1\over q}
		\\ 
		&\leq~\left\{\sum_{k'\in\Z}     \left\|{2^{k'\alpha }    \sum_{k\in\Z}\sum_{Q'\in Q_d^{k'}}  \sum_{Q\in Q_{cw}^k} \omega(Q)\left|D_{k'}  \Tilde{E}_k(x_{Q'},x_{Q}){\bf E}_k(f)(x_{Q}) \right|    \chi{_{Q'}}(x) }\right\|_{p}^{q}\right\} ^{1\over q}.
	\end{aligned}
	\end{equation}
	By   \cite[{\bf Lemma {2.11}}]{Hp}, we have
	\bel{ek}
	\left|D_{k'}\Tilde{E}_k(x_{Q'},x_{Q})\right|\leq C2^{-|k-k'|\varepsilon_0}\frac{1}{V(x_{Q'},x_Q,2^{-k\lor{-k'}}+d(x_{Q'},x_{Q}))}\left(\frac{2^{-k\lor{-k'}}}{2^{-k\lor{-k'}}+d(x_{Q'},x_{Q})}\right)^{\var_0}.
	\eeq
	Then by {\bf Lemma \ref{iterate}},
	\begin{equation}\label{HH}
	\begin{aligned}
		\left\{\sum_{k'\in\Z}      \left\|{2^{k'\alpha }   \sum_{k\in\Z}\sum_{Q'\in Q_d^{k'}}  \sum_{Q\in Q_{cw}^k} \omega(Q)\left|D_{k' } \Tilde{E}_k(x_{Q'},x_{Q}){\bf E}_k(f)(x_{Q}) \right|    \chi{_{Q'}}(x) }\right\|_{p}^{q}\right\} ^{1\over q}
		\\
		~\leq~C~ \left\{\sum_{k\in\Z}\left\|2^{k\alpha }\sum_{Q\in Q_{cw}^k}{\bf E}_k(f)(x_Q)\chi_Q(x)\right\|_p^{q} \right\}^{1\over q}.
	\end{aligned}
	\end{equation}
	Therefore,  $\left\|f\right\| _{\dot{B}_{p,d}^{\alpha,q}}\leq C\left\|f\right\| _{\dot{B}_{p,cw}^{\alpha,q}}$.

	{\bf {Step 3:}} We prove
	$	\left\|f\right\| _{\dot{B}_{p,cw}^{\alpha,q}}~\leq~	C\left\|f\right\| _{\dot{B}_{p,d}^{\alpha,q}}$.
	By  {\bf {Step~1}}, we  have
	\begin{equation}
		\left\|f\right\|_{\dot{B}_{p,cw}^{\alpha,q}}\sim \left\|{\bf T}_{M_0}f\right\|_{\dot{B}_{p,cw}^{\alpha,q}}.
	\end{equation}
Then we only need to show
	$$
	\left\|{\bf T}_{M_0}f\right\|_{\dot{B}_{p,cw}^{\alpha,q}}\leq C\left\|f\right\|_{\dot{B}_{p,d}^{\alpha,q}}.
	$$
		By the definition of ${\bf T}_{M_0}f$ in \eqref{T_{M_0}}, we obtain that
	\begin{equation}
		\begin{aligned}
			\left\|{\bf T}_{M_0}f\right\| _{\dot{B}_{p,cw}^{\alpha,q}}
			&=\left\{\sum_{k'\in\Z}    \left\|2^{k'\alpha } \sum_{Q'\in Q_{cw}^{k'}}{{\bf E}_{k' }({\bf T}_{M_0}f)(x_{Q'}) \chi{_{Q'}}(x) }\right\|_{p}^{q}\right\} ^{1\over q}
			\\
			&=\left\{\sum_{k'\in\Z}    \left\|2^{k'\alpha }    \sum_{Q'\in Q_{cw}^{k'}}{{\bf E}_{k'}  \left(    \sum_{k\in\Z}\sum_{Q\in Q_d^{k}} \omega(Q)D^{M_0}_k(\cdot,x_{Q}){\bf D}_k(f)(x_{Q})    \right)    (x_{Q'}) \chi{_{Q'}}(x) }\right\|_{p}^{q}\right\} ^{1\over q}
			\\
			&\leq~C~\left\{\sum_{k'\in\Z}       \left\|{2^{k'\alpha}  \sum_{k\in\Z}\sum_{Q'\in Q^{k'}_{cw}}  \sum_{Q\in Q^k_d} \omega(Q)\left|E_{k'}   D^{M_0}_k(x_{Q'},x_{Q}){\bf D}_k(f)(x_{Q}) \right|    \chi{_{Q'}}(x) }\right\|_{p}^{q}\right\} ^{1\over q}.
		\end{aligned}
	\end{equation}
	By  \cite[{\bf Lemma 2.11}]{Hp}, we get
	\begin{equation}\label{EkDk}
	\left|E_{k'}D^{M_0}_{k}(x_{Q'},x_Q)\right|\leq C~2^{-|k-k'|\varepsilon_0}\frac{1}{V(x_{Q'},x_Q,2^{-k\lor{-k'}}+d(x_{Q'},x_Q))}\left(\frac{2^{-k\lor{-k'}}}{2^{-k\lor{-k'}}+d(x_{Q'},x_Q)}\right)^{\varepsilon_0}.
	\end{equation}
	Then by {\bf Lemma \ref{iterate}}, we have
	\begin{equation}\label{ekdk}
	\begin{aligned}
		\left\{\sum_{k'\in\Z}      \left\|{2^{k'\alpha }   \sum_{k\in\Z}\sum_{Q'\in Q_{cw}^{k'}}  \sum_{Q\in Q_d^k} \omega(Q)\left|E_{k '}  D^{M_0}_k(x_{Q'},x_{Q}){\bf D}_k(f)(x_{Q}) \right|    \chi{_{Q'}}(x) }\right\|_{p}^{q}\right\} ^{1\over q}
		\\ 
		~\leq~ C~\left\{\sum_{k\in\Z}\left\|2^{k\alpha }\sum_{Q\in Q_d^k}{\bf D}_k(f)(x_Q)\chi_Q(x) \right\|_p^q\right\}^{1\over q}=C~\left\|f\right\|_{\dot{B}_{p,d}^{\alpha,q}}.
	\end{aligned}
	\end{equation}
\end{proof}
	
\begin{proof}[{Proof of Theorem  \ref{Dunkl reproducing} }]
 By applying {\bf   Lemma  \ref{Tbounded}} ,  we have ${\bf T}_{M_0}$ and $	({\bf T}_{M_0})^{-1}$ are bounded on   $L^2\cap \dot{B}_{p,d}^{\alpha,q}$. 	Set $h:=({\bf T}_{M_0})^{-1} f$, then we obtain
	$$
	f(x)={\bf T}_{M_0} h(x)=\sum_{k\in\Z}\sum_{Q\in Q_d^k}\omega(Q)D^{M_0}_k(x,x_Q){\bf D}_k(h)(x_Q),
	$$
	where   $\|f\|_{2}\sim\|h\|_{2}$,   $\|f\|_{\dot{B}_{p,d}^{\alpha,q}}\sim\|h\|_{\dot{B}_{p,d}^{\alpha,q}}$.
	To prove the above series converges in $L^2\cap \dot{B}_{p,d}^{\alpha,q}$, we just need to show
	\begin{equation}\label{jk}
	\lim_{m\to\infty}	\left\{\sum_{k'\in \Z}\left\|2^{k'\alpha }\sum_{Q'\in Q_d^{k'}}\sum_{|k|>m}\sum_{Q\in Q_d^{k}} \left|D_{k'}D_k^{M_0}(x_{Q'},x_Q){\bf D}_k(h)(x_Q)\right|\chi_{Q'}(x)\right\|_p^q\right\}^{1\over q}=0.
	\end{equation}
	By  \cite[{\bf Lemma 2.11}] {Hp},
	\begin{equation}\label{DkDj}
	\left|D_{k'}D_k^{M_0}(x_{Q'},x_Q)\right|
	~\leq~2^{-|k-k'|\var_0}\frac{1}{V(x_{Q'},x_Q,2^{-k\lor{-k'}}+d(x_{Q'},x_Q))}\left(\frac{2^{-k\lor{-k'}}}{2^{-k\lor{-k'}}+d(x_{Q'},x_Q)}\right)^{\varepsilon_0}.
	\end{equation}
	Then by {\bf Lemma \ref{iterate}},
	\begin{equation}\label{result2}
	\begin{aligned}
		\left\{\sum_{k'\in \Z}\left\|2^{k'\alpha }\sum_{Q'\in Q^{k'}_d}\sum_{|k|>m}\sum_{Q\in Q^{k}_d} \left|D_{k'}D_k^{M_0}(x_{Q'},x_Q){\bf D}_k(h)(x_Q)\right|\chi_{Q'}(x)\right\|_p^q\right\}^{1\over q}
		\\
		\leq ~\left\{\sum_{|k|>m}\left\|2^{k\alpha } \sum_{Q\in Q_d^k}{\bf D}_k(h)(x_Q)\chi_Q(x)\right\|^{q}_p\right\}^{1\over q}.
	\end{aligned}
	\end{equation}
	Since $\left\|h\right\|_{\dot{B}_{p,d}^{\alpha,q}}\sim \left\|f\right\|_{\dot{B}_{p,d}^{\alpha,q}}$, the last term tends to $0$ as $m\to \infty$. Then the proof of {\bf Theorem \ref{Dunkl reproducing}}  is complete.
	\end{proof}
	
	\section{Duality estimate}
	\setcounter{equation}{0}
	
	In  {\bf Definition \ref{index}}, we define the dual indexes  $\alpha',p',q'$ of $\alpha,p,q$.
	As mentioned, the starting point  of the Besov spaces in the Dunkl setting  is the duality estimate {\bf Theorem \ref{dual prop}}.
	
    \begin{proof}[Proof of Theorem \ref{dual prop}.] 
    	Applying  {\bf Theorem \ref{Dunkl reproducing}} for $f\in L^2\cap\dot{B}_{p,d}^{\alpha,q}$,
	there exists a function $h\in L^2(d\omega)$ with $\|h\|_2\sim\|f\|_2$ and $\|h\|_{ \dot{B}_{p,d}^{\alpha,q}}\sim \|f\|_{ \dot{B}_{p,d}^{\alpha,q}}$ such that
	\begin{equation}
		\langle f,g\rangle
		=\sum_{k\in\Z}\sum_{Q\in Q_d^k} \omega(Q){\bf D}_k(h)(x_Q) {\bf D}_k^{M_0}(g)(x_Q).
	\end{equation}
	Now we divide the proof into several cases for different ranges of $p,~q$.

	{\bf Case 1.}\qquad Suppose  $~ 1<p,~ q<\infty$. Then by H\"{o}lder inequality,
	\begin{equation}\label{pq1}
	\begin{aligned}
		\left|\langle f,g\rangle\right|
		&\leq~ \sum_{k\in\Z}\left\{\sum_{Q\in Q_d^k} \omega(Q)  \left\{2^{k\alpha}\left|  {\bf D}_k(h)(x_Q)    \right|\right\}^p \right\}^{1\over p}  \left\{\sum_{Q\in Q_d^k} \omega(Q)  \left\{2^{-k\alpha}\left|{\bf D}_k^{M_0}(g)(x_Q)\right|\right\}^{p'}\right\}^{1\over p'}
		\\
		&\leq~\left\{\sum_{k\in\Z}\left\{\sum_{Q\in Q_d^k} \omega(Q)  \left\{2^{k\alpha}\left|  {\bf D}_k(h)(x_Q)    \right|\right\}^p \right\}^{q\over p}  \right\}^{1\over q}     \left\{\sum_{k \in\Z}\left\{\sum_{Q\in Q_d^k} \omega(Q)  \left\{2^{-k\alpha}\left|{\bf D}_k^{M_0}(g)(x_Q)\right|\right\}^{p'} \right\}^{q'\over p'} \right\} ^{1\over q'}.
	\end{aligned}
	\end{equation}
	For the second term,  similar to the {\bf step 2} in {\bf Lemma \ref{Tbounded}}, we get 
	\begin{equation}\label{2claim}
		\left\{\sum_{k \in\Z}\left\{\sum_{Q\in Q_d^k} \omega(Q)  \left\{2^{-k\alpha}\left|{\bf D}_k^{M_0}(g)(x_Q)\right|\right\}^{p'} \right\}^{q'\over p'} \right\} ^{1\over q'}
		\leq C~ \|g\|_{\dot{B}_{p',d}^{-\alpha,q'}}.
	\end{equation}
	Note that in this case, $\alpha'=-\alpha$. Then we have
	\begin{equation}
		\left|\langle f,g\rangle\right|
		~\leq~ C  \left\|h\right\|_{\dot{B}_{p,d}^{\alpha,q}}\left\|g\right\|_{\dot{B}_{p',d}^{\alpha',q'}}
		~=~ C\left\|f\right\| _{\dot{B}_{p,d}^{\alpha,q}}\left\|g\right\|_{\dot{B}_{p',d}^{\alpha',q'}}.
	\end{equation}

	{\bf Case 2.}\qquad  Suppose   $~1<p<\infty,~0<q\leq 1$. Then by H\"{o}lder inequality,
	\begin{equation}\label{pq2}
	\begin{aligned}
		\left|\langle f,g\rangle\right|
		&\leq~ \sum_{k\in\Z}\left\{\sum_{Q\in Q_d^k} \omega(Q)  \left\{2^{k\alpha}\left|  {\bf D}_k(h)(x_Q)  \right|\right\}^p \right\}^{1\over p}
		\left\{\sum_{Q\in Q_d^k} \omega(Q)  \left\{2^{-k\alpha}\left|{\bf D}_k^{M_0}(g)(x_Q)\right|\right\}^{p'}\right\}^{1\over p'}
		\\
		&\leq~C~\left\{\sum_{k\in\Z}\left\{\sum_{Q\in Q_d^k} \omega(Q)\left\{2^{k\alpha}\left|  {\bf D}_k(h)(x_Q) \right|\right\}^p \right\}^{q\over p} \right\}^{1\over q}  	\left\|g\right\| _{\dot{B}_{p'}^{-\alpha,\infty}}
		\\\\ \ds
		&= C \left\|h\right\| _{\dot{B}_{p,d}^{\alpha,q}}    \left\|g\right\| _{\dot{B}_{p',d}^{-\alpha,\infty}}  =C \left\|f\right\| _{\dot{B}_{p,d}^{\alpha,q}}    \left\|g\right\| _{\dot{B}_{p',d}^{\alpha',\infty}}.
	\end{aligned}
	\end{equation}

	{\bf Case 3.}\qquad  For  $\max\left\{{N\over {N+1}},~{N\over{N+1+\alpha}}\right\}<p\leq1,~1<q<\infty$, we have
	\begin{equation}\label{pq3}
		\begin{aligned}
			\left|\langle f,g\rangle\right|
			&\leq \sum_{k\in\Z} \sum_{Q\in Q_d^k} \omega(Q)  2^{k[\alpha-N({1\over p} -1)]}\left|  {\bf D}_k(h)(x_Q)    \right| ~  \sup_{{Q\in Q_d^k} }\left\{ 2^{-k[\alpha-N({1\over p} -1)]}\left|{\bf D}_k^{M_0}(g)(x_Q)\right|\right\}
			\\
			&\leq \left\{\sum_{k\in\Z} \left\{\sum_{Q\in Q_d^k} \omega(Q)  2^{k[\alpha-N({1\over p} -1)]}\left|  {\bf D}_k(h)(x_Q)    \right| \right\}^q\right\}^{1\over q}~   \left\{\sum_{k\in\Z}\left\{\sup_{{Q\in Q_d^k}}2^{-k[\alpha-N({1\over p} -1)]}\left|{\bf D}_k^{M_0}(g)(x_Q)\right|\right\}^{q'}\right\}^{1\over {q'}}.
		\end{aligned}
	\end{equation}
	Then  we have
	\begin{equation}
		\left|\langle f,g\rangle\right|\leq C \left\|h\right\| _{\dot{B}_{1,d}^{\alpha-{N({1\over p}-1)},q}}\left\|g\right\| _{\dot{B}_{\infty,d}^{-\alpha+{N({1\over p}-1)},q'}}= C \left\|f\right\| _{\dot{B}_{1,d}^{\alpha-{N({1\over p}-1)},q}}\left\|g\right\| _{\dot{B}_{\infty,d}^{\alpha',q'}}.
	\end{equation}
It remains to prove $\left\|f\right\| _{\dot{B}_{1,d}^{\alpha-{N({1\over p}-1)},q}}\leq C \left\|f\right\| _{\dot{B}_{p,d}^{\alpha,q}}$. 		Since  $p\leq1$, we get
		\begin{equation}
	\begin{aligned}
		\left\|f\right\| _{\dot{B}_{1,d}^{\alpha-{N({1\over p}-1)},q}}  &=
		\left\{\sum_{k\in\Z} ~2^{k[\alpha-N({1\over p} -1)]q}\left\{\sum_{Q\in Q_d^k} \omega(Q)  \left|  {\bf D}_k(f)(x_Q)    \right| \right\}^q\right\}^{1\over q}
		\\
		&\leq \left\{\sum_{k\in\Z}~2^{k[\alpha-N({1\over p} -1)]q} \left\{\sum_{Q\in Q_d^k} {\omega(Q)}^{1-{1\over p}}  ~{\omega(Q)}^{1\over p}~\left|  {\bf D}_k(f)(x_Q) \right| \right\}^{q}\right\}^{1\over q}
		\\ 
		&\leq\left\{\sum_{k\in\Z}~2^{k[\alpha-N({1\over p} -1)]q} \left\{\sum_{Q\in Q_d^k} {\omega(Q)}^{p-{1} } ~{\omega(Q)}~\left|  {\bf D}_k(f)(x_Q) \right|^p \right\}^{q\over p}\right\}^{1\over q}.
\end{aligned}
\end{equation}

	By \eqref{measure of ball}, for  $Q\in Q_d^k$, we have $\omega(Q)\geq~C~2^{-kN}.$ Hence,
	\begin{equation}\label{embedding result}
	\begin{aligned}
		\left\|f\right\| _{\dot{B}_{1,d}^{\alpha-{N({1\over p}-1)},q}}  
	&\leq C \left\{\sum_{k\in\Z}~2^{k[\alpha-N({1\over p} -1)]q} \left\{\sum_{Q\in Q_d^k} 2^{-kN(p-1)} ~{\omega(Q)}~\left|  {\bf D}_k(f)(x_Q) \right|^p \right\}^{q\over p}\right\}^{1\over q}
		\\
	&\leq C\left\{\sum_{k\in\Z} ~2^{k\alpha q}\left\{\sum_{Q\in Q_d^k} {\omega(Q)}~\left|  {\bf D}_k(f)(x_Q) \right|^p \right\}^{q\over p}\right\}^{1\over q} =C\left\|f\right\| _{\dot{B}_{p,d}^{\alpha,q}}.
	\end{aligned}
	\end{equation}
    Thus,
	\begin{equation}
	\begin{aligned}
		\left|\langle f,g\rangle\right|~\leq~ \left\|f\right\| _{\dot{B}_{p,d}^{\alpha,q}}    \left\|g\right\| _{\dot{B}_{\infty,d}^{\alpha',q'}}.
	\end{aligned}
	\end{equation}

	{\bf Case 4.}\qquad For  $\max\left\{{N\over {N+1}},~{N\over{N+1+\alpha}}\right\}<p\leq1,~0<q\leq1$, we have
	\begin{equation}\label{pq4}
		\left|\langle f,g\rangle\right|
		\leq~ \sum_{k\in\Z} \sum_{Q\in Q_d^k} \omega(Q)   2^{k[\alpha-N({1\over p} -1)]}\left| { \bf D}_k(h)(x_Q)\right|~  \left\{\sup_{k\in \Z,Q\in Q_d^k} 2^{-k[\alpha-N({1\over p} -1)]}\left|{\bf D}_k^{M_0}(g)(x_Q)\right|\right\}.
	\end{equation}
	Since $q\leq 1$, we have
	\begin{equation}\label{xjq}
	\begin{aligned}
		\left|\langle f,g\rangle\right|&\leq~ \left\{\sum_{k\in\Z} \left\{\sum_{Q\in Q_d^k} \omega(Q)   2^{k[\alpha-N({1\over p} -1)]}\left|  {\bf D}_k(h)(x_Q)    \right| \right\}^q\right\}^{1\over q}~      \left\|g\right\| _{\dot{B}_{\infty}^{-\alpha+{N({1\over p}-1)},\infty}}
		\\
		&= C\left\|f\right\| _{\dot{B}_{1,d}^{\alpha-{N({1\over p}-1)},q}}    \left\|g\right\| _{\dot{B}_{\infty,d}^{\alpha',\infty}}.
	\end{aligned}
	\end{equation}
	From \eqref{embedding result},  $\left\|f\right\| _{\dot{B}_{1,d}^{\alpha-{N({1\over p}-1)},q}}\leq C\left\|f\right\| _{\dot{B}_{p,d}^{\alpha,q}}$. Then we obtain that
	\begin{equation}
		\left|\langle f,g\rangle\right|~\leq~ \left\|f\right\| _{\dot{B}_{p,d}^{\alpha,q}}    \left\|g\right\| _{\dot{B}_{\infty,d}^{\alpha',\infty}}.
    \end{equation}
\end{proof}
	
	\section {Dunkl-Besov space}
	\setcounter{equation}{0}
	In this section, we will prove that Dunkl-Besov space $\dot{\B}_{p,d}^{\alpha,q}$ is the closure of $L^2\cap \dot{B}_{p,d}^{\alpha,q}$, so $\dot{\B}_{p,d}^{\alpha,q}$ is a complete space. Before introducing the Dunkl-Besov space,  we explain how to extend the operator ${\bf D_k}$ to the distribution spaces.  Because $\max\{{N\over N+1},{N\over {N+1+\alpha}}\}<p<\infty$, $0<q<\infty$ and $|\alpha|<1$, then  by {\bf Lemma} \ref{basis}, $D_j{(x, \cdot)} $ and $D_j{(\cdot, y)} $ belong to $L^2\cap\dot{B}^{\alpha,q}_{p,d}$ for any fixed $j\in \Z$  and $x,y\in \R^n$. Then for $f\in \left(L^2\cap \dot{B}_{p,d}^{\alpha,q}\right)'$,  the operator $\D_j(f)(y)= \langle f,D_j(\cdot,y)\rangle$ is well defined. Now we establish  a weak-type discrete  Calder\'{o}n reproducing formula in the distribution sense:
		
\begin{proof}[Proof of Theorem \ref{distribution converge}.] For $(i)$, suppose that  $\{f_n\}_{n=1}^{\infty}$ is a sequence in $L^2(d\omega)$ with  $ \| f_n-f_m\|_{\dot{B}_{p,d}^{\alpha,q}}\to 0$ as $n,m\to \infty$, then  for each $g\in L^2(d\omega)$ with $\|g\|_{\dot{B}_{p',d}^{\alpha',q'}}<\infty$,  by {\bf Theorem \ref{dual prop}},  we have $\lim\limits_{n,m\rightarrow\infty}\langle f_n-f_m, g\rangle=0$.  Therefore, there exists $f$, as  a distribution on $(L^2\cap\dot{B}^{\alpha',q'}_{p',d})'$, such that for each $g\in L^2(d\omega)$ with $\|g\|_{\dot{B}_{p',d}^{\alpha',q'}}<\infty$,
		$$
		\langle f,g\rangle=\lim\limits_{n\rightarrow\infty}\langle f_n, g\rangle.
		$$
		
		To show $(ii)$, since
		$$
		\left\|f-f_n\right\|_{\dot{B}_{p,d}^{\alpha,q}}=\left\{\sum_{k\in\Z}\left(\sum_{Q\in Q_d^k}\omega(Q)\left|{\bf D}_k(f-f_n)(x_Q)\right|^p\right)^{q\over p}\right\}^{1\over q},
		$$
		and ${\bf D}_k(f-f_n)(x_Q)=\lim\limits_{m\rightarrow\infty} {\bf D}_k(f_m-f_n)(x_Q)$. Then by Fatou's lemma,
		\begin{equation}
			\begin{aligned}
				\|f-f_n\|_{{\dot{B}_{p,d}^{\alpha,q}}}&\leq\liminf\limits_{m\rightarrow\infty} \left\{\sum_{k\in\Z}\left(\sum_{Q\in Q_d^k}\omega(Q)\left|{\bf D}_k(f_m-f_n)(x_Q)\right|^p\right)^{q\over p}\right\}^{1\over q}
				\\
				&=\liminf\limits_{m\rightarrow\infty}\|f_m-f_n\|_{\dot{B}_{p,d}^{\alpha,q}}\to 0, \qquad\textit{as $n\to\infty$}.
			\end{aligned}
		\end{equation}
		Hence, $\|f\|_{\dot{B}^{\alpha,q}_{p,d}}=\lim\limits_{n\rightarrow\infty}\|f_n\|_{\dot{B}^{\alpha,q}_{p,d}}$.
		
		To show $(iii)$, denote $h_n=({\bf T}_{M_0})^{-1}(f_n)$. By {\bf Lemma \ref{Tbounded}}, $\{h_n\}$ is a Cauchy sequence in $L^2\cap \dot{B}_{p,d}^{\alpha,q}$. Then by the same argument with $(i)$ and $(ii)$ , there exists  a distribution $h\in \left(L^2\cap\dot{B}^{\alpha',q'}_{p',d}\right)'$, such that for every $g\in L^2\cap\dot{B}^{\alpha',q'}_{p',d}$,
		$$
		\langle h,g\rangle=\lim\limits_{n\rightarrow\infty}\langle h_n, g\rangle
		$$
		and  $\|h\|_{{\dot{B}_{p,d}^{\alpha,q}} }~=\lim\limits_{n\to\infty} \|h_n\|_{{\dot{B}_{p,d}^{\alpha,q}} } \sim \lim\limits_{n\to\infty} \|f_n\|_{{\dot{B}_{p,d}^{\alpha,q}} } ~=\|f\|_{{\dot{B}_{p,d}^{\alpha,q}} }$.
		Applying the proof of  {\bf Theorem \ref{dual prop}}, we have
		\begin{equation}\label{iii1}
		\left|\sum\limits_{k\in \Z}\sum_{Q\in Q_d^k} \omega(Q){\bf D}_k(h)(x_Q) {\bf D}_k^{M_0}(g)(x_Q)\right|~\leq~C ~\left\|f\right\| _{\dot{B}_{p,d}^{\alpha,q}}    \left\|g\right\| _{\dot{B}_{p',d}^{\alpha',q'}},
		\end{equation}
		which implies that the series $ \sum\limits_{k\in \Z}\sum\limits_{Q\in Q_d^k} \omega(Q){\bf D}_k(h)(x_Q) {\bf D}_k^{M_0}(g)(x_Q)$  is a distribution in  $\left(L^2\cap\dot{B}^{\alpha',q'}_{p',d}\right)'$. Moreover, by the weak-type discrete  Calder\'{o}n reproducing formula of $f_n$ in  {\bf Theorem  \ref{Dunkl reproducing}} for each $g\in L^2\cap\dot{B}^{\alpha',q'}_{p',d}$,
		\begin{equation}\label{limit series}
		\langle f,g\rangle=\lim\limits_{n\rightarrow\infty}\langle f_n, g\rangle=\lim\limits_{n\rightarrow\infty}\left\langle\sum\limits_{k\in\Z}\sum_{Q\in Q_d^k} \omega(Q)D_k^{M_0}(\cdot, x_Q){\bf D}_k(h_n)(x_Q) , g\right\rangle,
		\end{equation}
		where $\|f_n\|_2 \sim\|h_n\|_2$  and $\|f_n\|_{\dot{B}^{\alpha,q}_{p,d}}\sim \|h_n\|_{\dot{B}^{\alpha,q}_{p,d}}$.
		Using the similar way as in the proof of {\bf Theorem \ref{dual prop}}, we obtain 
		\begin{equation}\label{h-hn}
		\left|\left\langle\sum\limits_{k\in\Z}\sum_{Q\in Q_d^k} \omega(Q)D_k^{M_0}(\cdot, x_Q){\bf D}_k(h-h_n)(x_Q) , g\right\rangle\right| \leq C \left\|h-h_n\right\| _{\dot{B}_{p,d}^{\alpha,q}}    \left\|g\right\| _{\dot{B}_{p',d}^{\alpha',q'}},
		\end{equation}
		where the last term above tends to zero as $n\to \infty$ and hence,
		\begin{equation}\label{fntof}
		\langle f,g\rangle=\lim\limits_{n\rightarrow\infty}\langle f_n, g\rangle=\left\langle\sum_{k\in\Z}\sum_{Q\in Q_d^k} \omega(Q)D_k^{M_0}(\cdot, x_Q){\bf D}_k(h)(x_Q) , g\right\rangle.
		\end{equation}
\end{proof}

		{\bf{Theorem \ref {distribution converge}}} indicates  that one can consider $L^2\cap\dot{B}_{p',d}^{\alpha',q'}$, the subspace of $f\in L^2(d\omega)$ with  the norm $\|f\|_{\dot{B}_{p',d}^{\alpha',q'}}<\infty$,  as  the test function space and  $(L^2\cap{\dot{B}_{p',d}^{\alpha',q'}})'$, as the distribution space. The Dunkl-Besov space is defined by {\bf Definition \ref{inf}}. We remark that in the {\bf Definition \ref{inf}}, the series $
		\sum_{k\in\Z}\sum_{Q\in Q_d^k}\omega(Q)D_k^{M_0}(x,x_Q)\lambda_Q,$  with $\left\{\sum_{k\in\Z} 2^{k\alpha q} \left\{\sum_{Q\in Q_d^k}\omega_Q~{\left|\lambda_{Q}\right| ^p}\right\}^{q\over p} \right\} ^{1\over q}<\infty$ defines a distribution  in $(L^2\cap{\dot{B}_{p',d}^{\alpha',q'}})'$.  Indeed, applying the proof of {\bf{Theorem \ref {dual prop}}}, for each $g\in L^2\cap{\dot{B}_{p',d}^{\alpha',q'}}$,
		$$
		\left| \sum\limits_{k\in Z}\sum_{Q\in Q_d^k}\omega(Q)\lambda_Q {\bf D}_k^{M_0}(g)(x_Q)\right|~\leq~C ~ \left\{\sum_{k\in\Z} 2^{k\alpha q} \left\{\sum_{Q\in Q_d^k}\omega_Q~{\left|\lambda_{Q}\right| ^p}\right\}^{q\over p} \right\} ^{1\over q} \left\|g\right\| _{\dot{B}_{p',d}^{\alpha',q'}}.
		$$

		We now show
		$\dot{\B}^{\alpha,q}_{p,d}(d\omega)=\bar{L^2\cap\dot{B}^{\alpha,q}_{p,d}}$.
		
\begin{proof}[Proof of Theorem \ref{completness}.]  Suppose $f\in \dot{\B}^{\alpha,q}_{p,d}(d\omega)$,  from  {\bf Definition \ref{inf}},  there exists a  sequence  $\lambda(Q)_{k\in \Z,Q\in Q_d^k}$  such that
		$$
		f=\sum_{k\in\Z}\sum_{Q\in Q_d^k}\omega(Q)D_k^{M_0}(\cdot,x_Q)\lambda_Q,
		$$
		with $\left\{\sum_{k} 2^{k\alpha q} \left\{\sum_{Q\in Q_d^k}\omega (Q){\left|\lambda_{Q}\right| ^p}\right\}^{q\over p} \right\} ^{1\over q}<\infty.$
		Let 	
		\begin{equation}\label{fn reproducing}
		f_n(x)=\sum_{|k|\leq n}\sum_{{Q\in Q_d^k},{Q\subset B(0,n)}} \omega(Q)D_k^{M_0}(x,x_Q)\lambda_{Q}.
		\end{equation}
		We observe that $f_n\in L^2(d\omega)$,  then by \eqref{fn reproducing}, we have
		\begin{equation}\label{fn Besov}
		\begin{aligned}
			\left\|f_n\right\| _{\dot{B}_{p,d}^{\alpha,q}}  &=\left\{\sum_{k'\in \Z} \left\|2^{k'\alpha }\sum_{Q'\in Q_d^{k'}}{\bf D}_{k'} (f_n)(x_{Q'})\chi_{Q'}(x)\right\|_p^{q}          \right\} ^{1\over q}
			\\
			 &=\left\{\sum_{k'\in \Z} \left\|2^{k'\alpha } \sum_{Q'\in Q_d^{k'}}  \sum_{|k|\leq n}   {\sum_{{Q\in Q_d^k},{Q\subset B(0,n)}} \omega(Q) D_{k'} D_k^{M_0}(x_{Q'},x_Q) }   \lambda_{Q} \chi_{Q'}(x)\right\|_p^{q}  \right\} ^{1\over q}.
		\end{aligned}
		\end{equation}
		Then by  {\bf	Lemma \ref{iterate}}, we obtain that
		\begin{equation}\label{fn Besov norm}
		\begin{aligned}
			\left\|f_n\right\| _{\dot{B}_{p,d}^{\alpha,q}} \leq C \left\{\sum_{k\in\Z}\left\|2^{k\alpha}\sum_{Q\in Q_d^k}\lambda_Q\chi_Q(x) \right\|_p^q \right\}^{\frac{1}{q}}= \left\{\sum_{k\in \Z} 2^{k\alpha q} \left\{\sum_{Q\in Q_d^k}\omega (Q){\left|\lambda_{Q}\right| ^p}\right\}^{q\over p} \right\} ^{1\over q}<\infty.
		\end{aligned}
		\end{equation}
		Thus, $f_n\in  L^2\cap\dot{B}^{\alpha,q}_{p,d}$ and $f_n$ converges to $f$ in the sense of distributions on $L^2\cap \dot{B}_{p',d}^{\alpha',q'}$. To see $f_n$ converges to $f$ in  $\dot{\B}^{\alpha,q}_{p,d}$ as $n$ tends to $\infty$, let $E_{n}^k=\left\{Q:~Q\in Q_d^{k},Q\subset B(0,n)\right\}$ and $E_{n,m}^{k,c}=E_n^k\setminus E^k_m$ with $n\geq m$,
		\begin{equation}\label{fn-fm1}
		\begin{aligned}
			\|f_n-f_m\|_{{\dot{B}_{p,d}^{\alpha,q}} }&=\left\{\sum_{k'\in \Z} 2^{k'\alpha q}     \left\{\sum_{Q'\in Q_d^{k'}}\omega (Q'){\left|{\bf D}_{k'}(f_{n}-f_{m})(x_{Q'})\right| ^p}\right\}^{q\over p} \right\} ^{1\over q}
			\\ 
		    &=\left\{\sum_{k'\in\Z}      \left\|2^{k'\alpha }\sum_{Q'\in Q_d^{k'}}{{\bf D}_{k'}(f_{n}-f_{m})(x_{Q'})\chi_{Q'}(x)}\right\|^{q}_{p} \right\} ^{1\over q}
			\\
			&\leq~\left\{\sum_{k'\in\Z}   \left\| 2^{k'\alpha } \sum_{Q'\in Q_d^{k'}} \sum_{k=m+1}^n\sum_{Q\in E_{n,m}^{k,c}} \omega(Q)  \left| D_{k'} D_{k}^{M_0}(\cdot, x_{Q})\lambda _{Q}\right| \chi_Q'(x)   \right\|^{q}_{p} \right\} ^{1\over q}
			\\
			&\leq~C\left\{\sum_{k=m+1}^n\left\|2^{k\alpha }\sum_{Q\in E_{n,m}^{k,c}}\lambda_Q\chi_Q(x) \right\|_p^q \right\}^{\frac{1}{q}} \to 0
		\end{aligned}
		\end{equation}
		as $n,m$ tend to $\infty$, where the last inequality follows from {\bf Lemma} \ref{iterate}  and hence, $f\in \bar{L^2\cap\dot{B}^{\alpha,q}_{p,d}}$.
		
		Conversely, if $f\in \bar{L^2\cap\dot{B}^{\alpha,q}_{p,d}}$ , by {\bf{Theorem \ref {distribution converge}}},  there exists ~$h\in \left(L^2\cap\dot{B}^{\alpha',q'}_{p',d}\right)'$  with $\|f\|_{\dot{B}^{\alpha,q}_p}\sim \|h\|_{\dot{B}^{\alpha,q}_{p,d}}$ such  that for each  $g\in L^2\cap\dot{B}^{\alpha',q'}_{p',d}$,
		
		\begin{equation}\label{fg1}
		\langle f,g\rangle=\left\langle\sum\limits_{k\in\Z}\sum\limits_{Q\in Q_d^k}\omega(Q)D_k^{M_0}(\cdot,x_Q){\bf D}_k(h)(x_Q), g\right\rangle.
		\end{equation}
	Set $\lambda_Q=D_k(h)(x_Q)$ with $Q\in Q_d^k$. We obtain a wavelet-type decomposition of ~$f\in  \left(L^2\cap\dot{B}^{\alpha',q'}_{p',d}\right)'$ in the distribution sense:
		$$
		f=\sum_{k\in\Z}\sum_{Q\in Q_d^k}\omega(Q)D_k^{M_0}(x,x_Q)\lambda_Q.
		$$ 
		Moreover, 
		\begin{equation}\label{converse}
		\|f\|_{\dot{\B}^{\alpha,q}_{p,d}}=\inf \left\{\sum_{k\in\Z} 2^{k\alpha q} \left\{\sum_{Q\in Q_d^k}\omega_Q~{\left|\lambda_{Q}\right| ^p}\right\}^{q\over p} \right\} ^{1\over q}~\leq~C~\|f\|_{\dot{B}^{\alpha,q}_{p,d}}.
		\end{equation}
		By definition,  we have $f\in \dot{\B}^{\alpha,q}_{p,d}(d\omega)$.
	\end{proof}

		In the end, we give the following remark.
		In {\bf Definition {\ref{inf}}}, we define the  norm of Dunkl-Besov space as  \eqref {finf}. However, from {\bf Theorem {\ref {completness}}}, for $ f\in \dot{\B}_{p,d}^{\alpha,q}$,  there exists a  Cauchy sequence in $L^2\cap\dot{B}^{\alpha,q}_{p,d}$ converges to  $f$,  then we have another definition of the norm of $f$ :
		$$
		\left\|f\right\| _{\dot{B}_{p,d}^{\alpha,q}}  =\left\{\sum_{k\in\Z} \left\|2^{k\alpha }\sum_{Q\in Q_d^k}{\bf D}_k (f)(x_Q)\chi_Q(x)\right\|_p^{q}          \right\} ^{1\over q}
		$$		
		We will show that the two  definitions are  equivalent. That is
		$$
		\inf_{\{\lambda_Q\}_{k\in \Z,Q\in Q_d^k}}\left\{ \sum_{k\in \Z}  \left\|2^{k\alpha }\sum_{Q\in Q_d^k}|\lambda_Q|\chi_Q(x)\right\|_p^{q\over p} \right\}^{1\over q} \sim\left\{\sum_{k\in \Z}\left\|2^{k\alpha }\sum_{Q\in Q_d^k}\left|{\bf D}_k(f)(x_Q)\right|\chi_Q(x)\right\|_p^{q\over p}\right\}^{1\over q}.
		$$
		First, we  prove $$
		\inf_{\{\lambda_Q\}_{k\in \Z,Q\in Q_d^k}}\left\{ \sum_{k\in \Z}  \left\|2^{k\alpha }\sum_{Q\in Q_d^k}|\lambda_Q|\chi_Q(x)\right\|_p^{q\over p} \right\}^{1\over q} ~\leq ~C~\left\{\sum_{k\in \Z}\left\|2^{k\alpha }\sum_{Q\in Q_d^k}\left|{\bf D}_k(f)(x_Q)\right|\chi_Q(x)\right\|_p^{q\over p}\right\}^{1\over q}.
		$$
		By {\bf{Theorem \ref{distribution  converge}}}, there exists a distribution $h$ such that $\|h\|_{\dot{B}_{p,d}^{\alpha,q}}\sim\|f\|_{\dot{B}_{p,d}^{\alpha,q}}$,  and
		$$
		f(x)=\sum_{k\in\Z}\sum_{Q\in Q_d^k} \omega(Q)D_k^{M_0}(x,x_Q){\bf D}_k(h)(x_Q)
		$$
		in the distribution sense. Thus,
		\begin{equation}\label{inf definitiopn}
		\begin{aligned}
			\inf_{\{\lambda_Q\}_{k\in \Z,Q\in Q_d^k}}\left\{ \sum_{k\in \Z}  \left\|2^{k\alpha }\sum_{Q\in Q_d^k}|\lambda_Q|\chi_Q(x)\right\|_p^{q\over p} \right\}^{1\over q}
			&\leq \left\{\sum_{k\in \Z}\left\|2^{k\alpha }\sum_{Q\in Q_d^k}\left|{\bf D}_k(h)(x_Q)\right|\chi_Q(x)\right\|_p^{q\over p}\right\}^{1\over q}
			\\
			&=C\left\{\sum_{k\in \Z}\left\|2^{k\alpha }\sum_{Q\in Q_d^k}\left|{\bf D}_k(f)(x_Q)\right|\chi_Q(x)\right\|_p^{q\over p}\right\}^{1\over q}.
		\end{aligned}
		\end{equation}
		It remains to prove that there exist a constant $C$, such that for each $\{\lambda_Q\}_{k\in \Z,Q\in Q^k}$,
		$$
		\left\{\sum_{k\in \Z}\left\|2^{k\alpha }\sum_{Q\in Q_d^k}\left|{\bf D}_k(f)(x_Q)\right|\chi_Q(x)\right\|_p^{q\over p}\right\}^{1\over q} \leq C\left\{ \sum_{k\in \Z}  \left\|2^{k\alpha }\sum_{Q\in Q_d^k}|\lambda_Q|\chi_Q(x)\right\|_p^{q\over p} \right\}^{1\over q}.
		$$
		Because
		$$f(x)=\sum_{k\in\Z}\sum_{Q\in Q_d^k} \omega(Q)D_k^{M_0}(x,x_Q)\lambda_Q $$
		in the distribution sense and  $D_{k} (x_Q,\cdot)\in L^2\cap\dot{B}^{\alpha',q'}_{p',d}$,  we have
		\begin{equation}
			\begin{aligned}
				&\left\{\sum_{k'\in \Z}\left\|2^{k'\alpha }\sum_{Q'\in Q_d^{k'}}\left|{\bf D}_{k'}(f)(x_{Q})\right|\chi_{Q'}(x)\right\|_p^{q\over p}\right\}^{1\over q}
				\\
				&=\left\{\sum_{k'\in \Z}\left\|2^{k'\alpha }\sum_{Q'\in Q_d^{k'}}\left|\sum_{k\in\Z}\sum_{Q\in Q_d^{k}}\omega(Q)D_{k'}D_{k}^{M_0}(x_{Q'},x_{Q})\lambda_Q\right|\chi_{Q'}(x)\right\|^{q\over p}\right\}^{1\over q}
				\\
				&\leq C\left\{\sum_{k\in\Z}\left\|2^{k\alpha }\sum_{Q\in Q_d^k}\lambda_Q\chi_Q(x) \right\|_p^q \right\}^{\frac{1}{q}},
			\end{aligned}
		\end{equation}
		where the last inequality is by the almost orthogonal estimate of $\left|D_{k'}D^{M_0}_k(x,y)\right| $ and Lemma \ref{iterate}.

\section{Boundedness of  Dunkl-Calder\'{o}n-Zygmund operators}
		\setcounter{equation}{0}	
		Now we discuss the boundedness of Dunkl-Calder\'{o}n-Zygmund operators ${\bf T}$ on Dunkl Besov spaces $\dot{\B}_{p,d}^{\alpha,q}$. In this section, we always require that $|\alpha|<1, \max\left\{{N\over {N+1}},~{N\over{N+1+\alpha}}\right\}<p<\infty$, $0<q<\infty$. We always denote by $K(x,y)$ the kernel of ${\bf T}$. So ${\bf T }f(x)=\int_{\R^n} K(x,y) f(y) d\omega(y)$. 
		\begin{proof}[Proof of Theorem \ref{T bound}.]  Since distribution space $\dot{\B}^{\alpha,q}_{p,d}=\bar{ L^2\cap\dot{B}^{\alpha,q}_{p,d}}$, it suffices to prove $\|{\bf T}f\|_{\dot{B}^{\alpha,q}_{p,d}}\leq C \|f\|_{\dot{B}^{\alpha,q}_{p,d}}$ for $f\in L^2\cap\dot{B}^{\alpha,q}_{p,d}$. Because ${\bf T}$ is $L^2$-bounded, for $f\in L^2\cap\dot{B}^{\alpha,q}_{p,d}$, by {\bf Theorem} \ref{Dunkl reproducing} we have
		\begin{equation} 
			\begin{aligned}
				\left\|{\bf T}f \right\|_{\dot{B}^{\alpha,q}_{p,d}}&=\left\{\sum_{{k'}\in\Z}\left\|2^{{k'}\alpha}\sum_{{Q'}\in Q_d^{k'}}{\bf D}_{k'} \left( {\bf T}\left(\sum_{k\in\Z}\sum_{Q\in Q_d^k}\omega(Q)D_k^{M_0}(\cdot,x_Q)D_k(h)(x_Q)\right)\right)(x_{Q'})\chi_{Q'}(x) \right\|_p^q \right\}^{\frac{1}{q}}.
				\\
				&=\left\{\sum_{k'\in\Z} \left\| \sum_{k\in\Z}    \sum_{Q'\in Q_d^{k'}}  \sum_{Q\in Q_d^k}  {2^{k'\alpha} \omega(Q)D_{k'}   { T}D^{M_0}_k(x_{Q'},x_{Q}){\bf D}_k(h)(x_{Q})  \chi{_{Q'}}(x) }\right\|_{p}^{q}\right\} ^{1\over q},
			\end{aligned}
		\end{equation}
		where
		\begin{equation}
			D_{k'}TD_{k}^{M_0}(x,y)=\iint _{\R^n\times\R^n}D_{k'}(x,u)K(u,v)D_k^{M_0}(v,y)d\omega(u)d\omega(v).
		\end{equation}
		
		We now divide above summation into two parts. Let
		\begin{equation}
			t_{k,k'}=
			\begin{cases}
				1\qquad\textit{if $k>k'$},\\
				0\qquad\textit{if $k\leq k'$},
			\end{cases}
		\end{equation}
		and $t'_{k,k'}=1-t_{k,k'}$. Let
		\begin{equation}
			I_1=\left\{\sum_{k'\in\Z} \left\| \sum_{k\in\Z}    \sum_{Q'\in Q_d^{k'}}  \sum_{Q\in Q_d^k}  {2^{k'\alpha} \omega(Q)t_{k,k'}D_{k'}   { T}D^{M_0}_k(x_{Q'},x_{Q}){\bf D}_k(h)(x_{Q})  \chi{_{Q'}}(x) }\right\|_{p}^{q}\right\}^{1\over q}
		\end{equation}
		and
		\begin{equation}
			I_2=\left\{\sum_{k'\in\Z} \left\| \sum_{k\in\Z}    \sum_{Q'\in Q_d^{k'}}  \sum_{Q\in Q_d^k}  {2^{k'\alpha} \omega(Q)t'_{k,k'}D_{k'}   { T}D^{M_0}_k(x_{Q'},x_{Q}){\bf D}_k(h)(x_{Q})  \chi{_{Q'}}(x) }\right\|_{p}^{q}\right\}^{1\over q}.
		\end{equation}
		Then by Minkowski inequality, $\left\|{\bf T}f \right\|_{\dot{B}^{\alpha,q}_{p,d}}\leq C(I_1+I_2)$. Let us denote \begin{equation}\label{Skk'}
			S_{k,k'}(x,y):=\left|D_{k'}{ T}D^{M_0}_k(x,y)\right|.
		\end{equation}

		We first prove $(i)$. Write 
		\begin{equation}
			\widetilde{D}_k(x,y):={T}D^{M_0}_k(x,y)=\int_{\R^n} K(x,u)D_k^{M_0}(u,y)d\omega(u).
		\end{equation}
		Because ${\bf T}1=0$, from  \cite[{\bf Theorem $1.5$}]{Hp}, for any $\varepsilon<\varepsilon_0$, where $\var_0$ is the regularity exponent of ${\bf T}$, we have   \begin{equation}\label{weak1'}
			\left|	\widetilde{D}_k(x,y)\right|~\leq~ C_{\varepsilon}\frac{1}{V(x,y,2^{-k}+d(x,y))}\left(\frac{2^{-k}}{2^{-k}+d(x,y)}\right)^{\varepsilon};
		\end{equation} 
		\begin{equation} \label{weak2'}
			\begin{aligned}
				\left|	\widetilde{D}_k(x,y)-	\widetilde{D}_k(x',y)\right|~\leq~ C_{\varepsilon}&\left(\frac{\left\|x-x'\right\|}{2^{-k}}\right)^{\varepsilon}\left\{\frac{1}{V(x,y,2^{-k}+d(x,y))}\left(\frac{2^{-k}}{2^{-k}+d(x,y)}\right)^{\varepsilon}\right.
				\\ &\left.+\frac{1}{V(x',y,2^{-k}+d(x',y))}\left(\frac{2^{-k}}{2^{-k}+d(x',y)}\right)^{\varepsilon}\right\};
			\end{aligned}
		\end{equation}
		\begin{equation}\label{weak3'}
			\begin{aligned}
				\left|	\widetilde{D}_k(x,y)-	\widetilde{D}_k(x,y')\right|~\leq~ C_{\varepsilon}&\left(\frac{\left\|y-y'\right\|}{2^{-k}}\right)^{\varepsilon}\left\{\frac{1}{V(x,y,2^{-k}+d(x,y))}\left(\frac{2^{-k}}{2^{-k}+d(x,y)}\right)^{\varepsilon}\right.
				\\ &\left.+\frac{1}{V(x,y',2^{-k}+d(x,y'))}\left(\frac{2^{-k}}{2^{-k}+d(x,y')}\right)^{\varepsilon}\right\};
			\end{aligned}
		\end{equation}
		Because $|\alpha|<\var_0$, we can fix an $\varepsilon<\var_0$ such that $\alpha<\var$, and we rewrite $C$ for the constant. We now estimate $t_{k,k'}S_{k,k'}$ and $I_1$. If $k\leq k'$, then $t_{k,k'}=0$, If $k>k'$, we have
		\begin{equation}\label{DTD}
			\begin{aligned}
				S_{k,k'}(x,y)= \left|\int _{\R^n}D_{k'}(x,u)\widetilde{D}_k(u,y)d\omega(u)\right|,
			\end{aligned}
		\end{equation}
		and 
		\begin{equation}\label{case2}
			\begin{aligned}
				&\left|\int _{\R^n}D_{k'}(x,u)\widetilde{D}_k(u,y)d\omega(u)\right|
				\\
				&\leq~C~\int_{\R^n}\frac{1}{V(x,u,{2^{-k'}}+d(x,u))}\left(\frac{2^{-k'}}{{2^{-k'}}+d(x,u)}\right)^{\var}
				\frac{1}{V(u,y,{2^{-k}}+d(u,y))}\left(\frac{2^{-k}}{{2^{-k}}+d(u,y)}\right)^{\var}d\omega(u)
				\\
				&=C\int_{d(x,u)\geq d(u,y)} \frac{1}{V(x,u,{2^{-k'}}+d(x,u))}\left(\frac{2^{-k'}}{{2^{-k'}}+d(x,u)}\right)^{\var}
				\frac{1}{V(u,y,{2^{-k}}+d(u,y))}\left(\frac{2^{-k}}{{2^{-k}}+d(u,y)}\right)^{\var}d\omega(u)
				\\ 
				&~~~~+\int_{d(x,u)< d(u,y)} \frac{1}{V(x,u,{2^{-k'}}+d(x,u))}\left(\frac{2^{-k'}}{{2^{-k'}}+d(x,u)}\right)^{\var}
				\frac{1}{V(u,y,{2^{-k}}+d(u,y))}\left(\frac{2^{-k}}{{2^{-k}}+d(u,y)}\right)^{\var}d\omega(u)
				\\
				&=I+II.
			\end{aligned}
		\end{equation}
		When $d(x,u)\geq d(u,y)$, we have $d(x,y)\leq d(x,u)+d(u,y)\leq2d(x,u)$. Then by \cite[{\bf Lemma $2.2$}] {Hp}, 
		\begin{equation}\label{case2i}
			\begin{aligned}
				I&\leq  C  \frac{1}{V(x,{2^{-k'}}+d(x,y))}\left(\frac{2^{-k'}}{{2^{-k'}}+d(x,y)}\right)^{\var}
				\int_{\R^n}\frac{1}{V(u,y,{2^{-k}}+d(u,y))}\left(\frac{2^{-k}}{{2^{-k}}+d(u,y)}\right)^{\var}d\omega(u)
				\\
				&\leq C \frac{1}{V(x,y,{2^{-k'}}+d(x,y))}\left(\frac{2^{-k'}}{{2^{-k'}}+d(x,y)}\right)^{\var}.
			\end{aligned}
		\end{equation}
		When $d(x,u)<d(u,y)$, we have $d(x,y)\leq d(x,u)+d(u,y)\leq2d(u,y)$. If $d(x,y)>2^{-k'}$, then by \cite[{\bf Lemma $2.2$}] {Hp}, 
		\begin{equation}\label{case2ii}
			\begin{aligned}
				II&\leq  C \frac{1}{V(x,{2^{-k}}+d(x,y))}\left(\frac{2^{-k}}{{2^{-k}}+d(x,y)}\right)^{\var}
				\int_{\R^n}\frac{1}{V(x,u,{2^{-k'}}+d(x,u))}\left(\frac{2^{-k'}}{{2^{-k'}}+d(x,u)}\right)^{\var}d\omega(u)
				\\
				&\leq C \frac{1}{V(x,y,{2^{-k'}}+d(x,y))}\left(\frac{2^{-k'}}{{2^{-k'}}+d(x,y)}\right)^{\var}.
			\end{aligned}
		\end{equation}
		If $d(x,y)\leq 2^{-k'}$, then 
		\begin{equation}
			\begin{aligned}
				II&\leq  C    \frac{1}{V(x,{2^{-k'}})}\left(\frac{2^{-k'}}{{2^{-k'}}}\right)^{\var}
				\int_{\R^n}\frac{1}{V(x,u,{2^{-k}}+d(x,u))}\left(\frac{2^{-k}}{{2^{-k}}+d(x,u)}\right)^{\var}d\omega(u)
				\\
				&\leq C \frac{1}{V(x,y,{2^{-k'}}+d(x,y))}\left(\frac{2^{-k'}}{{2^{-k'}}+d(x,y)}\right)^{\var}.
			\end{aligned}
		\end{equation}
		In conclusion, if $k>k'$, we have
		\begin{equation}
			S_{k,k'}(x,y)\leq C \frac{1}{V(x,y,{2^{-k'}}+d(x,y))}\left(\frac{2^{-k'}}{{2^{-k'}}+d(x,y)}\right)^{\var}.
		\end{equation}
		In general we have
		\begin{equation}\label{ta}
			2^{(k'-k)\alpha}t_{k,k'}S_{k,k'}(x,y)\leq C2^{-|k'-k|\alpha} \frac{1}{V(x,y,{2^{-k'}}+d(x,y))}\left(\frac{2^{-k'}}{{2^{-k'}}+d(x,y)}\right)^{\var}.
		\end{equation}
		We now estimate $I_1$. By definition,
		\begin{equation}
			I_1=\left\{\sum_{k'\in\Z} \left\| \sum_{k\in\Z}    \sum_{Q'\in Q_d^{k'}}  \sum_{Q\in Q_d^k} \omega(Q){2^{(k'-k)\alpha}t_{k,k'}S_{k,k'}(x_{Q'},x_Q)2^{k\alpha}{\bf D}_k(h)(x_{Q}) \chi{_{Q'}}(x) }\right\|_{p}^{q}\right\}^{1\over q}
		\end{equation}
		Because $\alpha>0$ and $p>{N\over N+\alpha}>{N\over N+\varepsilon}$,  we can apply Lemma \ref{iterate} to obtain that
		\begin{equation} 
			I_1\leq C\left\{\sum_{k\in\Z}\left\|2^{k\alpha}\sum_{Q\in Q_d^k}{\bf D}_k(h)(x_{Q}) \chi_Q(x) \right\|_p^q \right\}^{\frac{1}{q}}=C \left\|f \right\|_{\dot{B}^{\alpha,q}_{p,d}}.
		\end{equation}
		
		It remains to estimate $t'_{k,k'}S_{k,k'}$ and $I_2$. When $k>k'$, $S'_{k,k'}=0$; when $k\leq k'$, by \cite [{\bf Lemma $2.11$}] {Hp}, for any $0<\varepsilon'<\varepsilon$,
		\begin{equation}\label{DD}
			\left|D_{k'}\widetilde{D}_k(x,y)\right|\leq  C_{\var'} 2^{-|k'-k|\var'}  {1\over {V(x,y,{2^{-k}}+d(x,y))}}\left({2^{-k}}\over {2^{-k}+d(x,y)}\right)^{\var'} .
		\end{equation}
		Choose $\var'<\var$ such that $p>{N\over N+\var'}>{N\over N+\var'+\alpha}$ and $|\alpha|<\var'$. From Lemma \ref{iterate},  we obtain that
		\begin{equation}\label{case1}
			I_2\leq~C\left\{\sum_{k\in\Z}\left\|2^{k\alpha}\sum_{Q\in Q_d^k}{\bf D}_k(h)(x_{Q}) \chi_Q(x) \right\|_p^q \right\}^{\frac{1}{q}}=C\left\|f \right\|_{\dot{B}^{\alpha,q}_{p,d}}.
		\end{equation}
		Then the proof of $(i)$ is complete.
		
		To prove $(ii)$, we write 
		\begin{equation}
			\overline{D}_{k'}(x,y)=D_{k'}T(x,y)=\int_{\R^n}D_{k'}(x,u)K(u,y)d\omega(u)
		\end{equation} 
		Then
		\begin{equation}
			S_{k,k'}(x,y)=\left|\int_{\R^n}\overline{D}_{k'}(x,u)D_k^{M_0}(u,y)d\omega(u)\right|.
		\end{equation}
		The argument is similar to $(i)$. By Theorem 1.5 in \cite{Hp}, $\overline{D}_k(x,y)$ also satisfies \eqref{weak1'}-\eqref{weak3'}.
		Then by the same argument of estimating $S_{k,k'}$ for $k>k'$ in $(i)$, one can see that for $k<k'$,
		\begin{equation}
			S_{k,k'}(x,y)\leq C \frac{1}{V(x,y,{2^{-k'}}+d(x,y))}\left(\frac{2^{-k'}}{{2^{-k'}}+d(x,y)}\right)^{\var}.
		\end{equation}
		Hence
		\begin{equation}
			2^{(k'-k)\alpha}t'_{k,k'}S_{k,k'}(x,y)\leq C2^{-|k-k'||\alpha|}\frac{1}{V(x,y,{2^{-k'}}+d(x,y))}\left(\frac{2^{-k'}}{{2^{-k'}}+d(x,y)}\right)^{\var}.
		\end{equation}
		Because
		\begin{equation}
			I_2=\left\{\sum_{k'\in\Z} \left\| \sum_{k\in\Z} \sum_{Q'\in Q_d^{k'}}  \sum_{Q\in Q_d^k} \omega(Q){2^{(k'-k)\alpha}t'_{k,k'}S_{k,k'}(x_{Q'},x_Q)2^{k\alpha}{\bf D}_k(h)(x_{Q}) \chi{_{Q'}}(x) }\right\|_{p}^{q}\right\}^{1\over q},
		\end{equation}
		by Lemma \ref{iterate}, when $p>{N\over  N-\alpha},$ we can obtain that
		\begin{equation}
				I_2\leq C\left\{\sum_{k\in\Z}\left\|2^{k\alpha}\sum_{Q\in Q_d^k}{\bf D}_k(h)(x_{Q}) \chi_Q(x) \right\|_p^q \right\}^{\frac{1}{q}}=C \left\|f \right\|_{\dot{B}^{\alpha,q}_{p,d}}.
		\end{equation}
		Also, by the argument of estimating $S_{k,k'}$ for $k\leq k'$ and $I_2$ in $(i)$, one can obtain that for $k\geq k'$,
		\begin{equation}
			S_{k,k'}(x,y)<C 2^{-|k-k'|\var'}\frac{1}{V(x,y,{2^{-k'}}+d(x,y))}\left(\frac{2^{-k'}}{{2^{-k'}}+d(x,y)}\right)^{\var'}.
		\end{equation}
		for any $0<\var'<\var_0$. Choose $|\alpha|<\var'<\var$ such that $p>{N\over N+\var'+\alpha}>{N\over N+\var'}$. Then we can apply Lemma \ref{iterate} to obtain that
		$$
		I_1\leq C\left\|f \right\|_{\dot{B}^{\alpha,q}_{p,d}}.
		$$
		Then the proof of $(ii)$ is complete.

		For $(iii)$, by  \cite [{\bf Theorem $1.5$}]{Hp}, we have $\widetilde{D}_k(x,y)$ (or $\overline{D}_{k'}(x,y)$) satisfies \eqref{size}-\eqref{cancelation} for any $\varepsilon<\varepsilon_0$. Then by  \cite[{\bf Lemma 2.11}]{Hp}, for each $0<\var<\var_0$, we have
		$$
		\left|D_{k'}TD^{M}_k(x,y)\right|=\left|D_{k'}\widetilde{D}_{k}(x,y)\right|\leq C_{\var} 2^{-|k'-k|\var}  {1\over {V(x,y,{2^{-k}}+d(x,y))}}\left({2^{-k}}\over {2^{-k}+d(x,y)}\right)^{\var}.
		$$
		Choose $|\alpha|<\var<\var_0$ such that $p>\max\left\{{N\over N+\var},{N\over N+\var+\alpha}\right\}$. Then by Lemma \ref{iterate}, we have
		\begin{equation}
			\begin{aligned}
				\|Tf\|_{\dot{B}_p^{\alpha,q}}&=\left\{\sum_{k'\in\Z} \left\| \sum_{k\in\Z}    \sum_{Q'\in Q^{k'}_{cw}}  \sum_{Q\in Q^k_d}  {2^{k'\alpha} \omega(Q)D_{k'}\widetilde{D}_k(x_{Q'},x_Q){\bf D}_k(h)(x_{Q})  \chi{_{Q'}}(x) }\right\|_{p}^{q}\right\} ^{1\over q}\\
				&\leq C\left\{\sum_{k'\in\Z} \left\| \sum_{Q\in Q^k_d}  {2^{k'\alpha} \omega(Q){\bf D}_k(h)(x_{Q})  \chi{_{Q'}}(x) }\right\|_{p}^{q}\right\}^{1\over q}= C\|f\|_{\dot{B}_{p,d}^{\alpha,q}}.
			\end{aligned}
		\end{equation}
		The proof of $(iii)$ is complete.
	\end{proof}
\section{Declarations}

{\bf Ethical Approval}~~  Not applicable.

{\bf Funding}~~ Not applicable.

{\bf Availability of data of materials} ~~\textbf{}Not applicable.

	\newpage

\end{document}